\documentclass[a4paper,11pt]{amsart}
\usepackage{fullpage}
\usepackage{lscape}
\usepackage{amsthm,float}
\usepackage{latexsym,bezier,caption,amsbsy,psfrag,color,enumerate,amsfonts,amsmath,amscd,amssymb,comment}
\usepackage{epsfig}
\usepackage{rotating}
\usepackage{graphics}
\usepackage{graphicx}
\usepackage{stmaryrd}
\usepackage{extarrows} 
\usepackage{amsmath}
\usepackage[foot]{amsaddr}
\usepackage{cite}
\usepackage{dsfont}
\usepackage{mathtools}
\usepackage{turnstile}
\usepackage{tikz}
\usepackage{url}
\usepackage{hyperref}
\DeclareEmphSequence{\bfseries,\mdseries}
 
\DeclareSymbolFont{rsfscript}{OMS}{rsfs}{m}{n}
\DeclareSymbolFontAlphabet{\mathrsfs}{rsfscript}

\DeclareMathOperator{\Aut}{Aut}

\DeclareMathOperator{\Sch}{Sch}

\DeclareMathOperator{\Cay}{Cay}
\DeclareMathOperator{\Star}{Star}
\DeclareSymbolFont{rsfscript}{OMS}{rsfs}{m}{n}
\newtheorem{theorem}{Theorem}[section]
\newtheorem{prop}[theorem]{Proposition}
\newtheorem{defn}[theorem]{Definition}
\newtheorem{lemma}[theorem]{Lemma}

\newtheorem{cor}[theorem]{Corollary}

\newtheorem{quest}[theorem]{Question}
\newtheorem{rem}[theorem]{Remark}
\newtheorem*{theorem*}{Theorem}
\newtheorem{example}[theorem]{Example}
\newtheorem*{lehnert}{Lehnert Conjecture}
\newtheorem*{brough}{Brough Conjectures}

\newcommand{\la}{\langle}
\newcommand{\ra}{\rangle}

\def\vlongrightarrow{\relbar\joinrel\longrightarrow}
\def\vvlongrightarrow{\relbar\joinrel\vlongrightarrow}
\def\vvvlongrightarrow{\relbar\joinrel\vvlongrightarrow}
\def\vvvvlongrightarrow{\relbar\joinrel\vvvlongrightarrow}

\newcommand{\vvlongmapright}[1]{\smash{\mathop{\vvvlongrightarrow}\limits^{\raisebox{-0.7ex}{\tiny $#1$}}}}

\newcommand{\longfr}[2]{\smash{\stackrel{\text{\tiny{$#1|#2$}}}{\vlongrightarrow}}}
\newcommand{\vlongfr}[2]{\smash{\stackrel{\text{\tiny{$#1|#2$}\,}}{\vvlongrightarrow}}}

\newcommand{\vvvlongfr}[2]{\smash{\stackrel{\text{\tiny{$#1|#2$}\,}}{\vvvvlongrightarrow}}}

 % \newcommand{\mapright}[1]{\smash{\stackrel{\text{\tiny{$#1$}}}{\vlongrightarrow}}}
% freccia lunga personalizzata con label più vicina

\def\vlongrightarrow{\relbar\joinrel\longrightarrow}

\newcommand{\mapright}[1]{%
  \smash{\stackrel{\raisebox{-0.9ex}{\tiny $#1$}}{\vlongrightarrow}}%
}

%%%%%%%%%%%%%%%%%%%%%%%%%%%%%%%%%%%%%%%%%%%%%%%%%%%%%%%%%%%%%%%%%%%%
%% Please, do not change the following four lines:

\hyphenation{de-fi-ni-tion}

% controllo textidote --check en --output html tree-like.tex > report.html  

\title{Context-free graphs and their transition groups}

\subjclass[2020]{Primary 20F65; Secondary 05C75, 20F10, 68Q70}

\keywords{directed graphs, context-free languages, co-context-free groups, transition groups}

\author{Daniele D'Angeli$^{\ast}$} 
\address{$\ast$ Universit\`a Niccolo Cusano, Rome, Italy}
\email{daniele.dangeli@unicusano.it}
\author{Francesco Matucci$^{\dagger}$}
\address{$\dagger$ Universit\`{a} degli Studi di Milano--Bicocca, Milan, Italy}
\email{francesco.matucci@unimib.it}
\author{Davide Perego$^{\ddagger}$}
\address{$\ddagger$ IMUS, Universidad de Sevilla, Sevilla, Spain (Current: Section de mathématiques, Université de Genève, rue du Conseil-Général 7-9, 1205 Genève, Switzerland)}
\email{dperego9@gmail.com}
\author{Emanuele Rodaro$^{\S}$}
\address{$\S$ Politecnico di Milano, Milan, Italy}
\email{emanuele.rodaro@polimi.it}

\begin{document}
\begin{abstract}
Starting from context-free inverse graphs, we introduce a new class of groups and study their structural  properties. We establish closure properties, show that their co-word problems are context-free, analyze torsion elements, and realize them as subgroups of the asynchronous rational group. Context-freeness is preserved under a generalized free product of graphs, and using this construction we provide examples of groups that are not residually finite or not poly-context-free, making them relevant for testing the Lehnert and Brough conjectures. Moreover, we investigate how small local modifications of a graph affect the global structure of the transition group, showing that for locally quasi-transitive graphs with infinite orbits, the transition group decomposes into a highly structured quotient by a bounded torsion subgroup, showing strong global constraints induced by local graph properties.
\end{abstract}

\maketitle

\section{Introduction}
In combinatorial group theory the relationship among graphs, formal languages and groups and has been significantly advanced through the investigation of the word problem. Given a group $G$ generated by a finite symmetric set $A$, any element can be represented by a usually non-unique word over $A$. The \emph{word problem} of $G$ with respect to $A$ is the collection of words representing the identity, or, equivalently, the words that label every closed path in the Cayley graph of $G$ with respect to $A$. Throughout the vast literature, one of the main projects is to fully adapt the famous Chomsky hierarchy to finitely generated groups. The first efforts towards this goal has been made by Anisimov \cite{asimo}, who classified the class of all regular groups as that of all finite groups, and later by Muller and Schupp \cite{muahu,muahu2}, who have shown that the class of all context-free groups is that of all virtually free groups. Such results can be rephrased verbatim in the language of graph theory. Further developments of their renowned theorem were pursued by Ceccherini-Silberstein and Woess \cite{Tullio}, and later by Rodaro \cite{RodContextfree} to generalize it in a more combinatorial setting to a certain class of digraphs. We also recall that Herbst has proved that a group is one-counter if and only if it is virtually cyclic (again we have a generalization for this due to Holt, Owens and Thomas (\cite{HOT}) showing that finitely generated groups having an intersection of finitely many one-counter languages as word problem are virtually abelian) as a intermediate step in the characterization.

The next step of the hierarchy is that of context-sensitive groups. Currently, many intermediate classes have been provided with examples of groups belonging to them: poly-context-free (i.e., its word problem is a finite intersection of context-free languages), co-context-free, co-ET0L, and co-indexed, among others. Several famous examples, such as bounded automata groups and $\mathbb{Z}*\mathbb{Z}^{2}$, were recently exhibited (\cite{Bishop}, \cite{Raad}), in these particular cases they are co-ET0L. On the other hand, the classes of poly-context-free and co-context-free groups come with their universal container conjectures.

\begin{lehnert}\label{thm:lehnert}
A finitely generated group is co-context-free group if and only if it embeds into Thompson $V$.
\end{lehnert}

\noindent This conjecture is here restated in the formulation given by \cite{BleakMatu}.

\begin{brough}\label{cj: Brough}
The following conjectures are due to Brough:
\begin{enumerate}
    \item \label{cj: Brough1} A finitely generated group is poly-context-free if and only if it is a finitely generated subgroup of a virtual direct product of free groups.
    \item \label{cj: Brough2} A finitely generated solvable group is poly-context-free if and only if it is virtually abelian.
    \item \label{cj: Brough3} If a group $G$ is poly-context-free, then $G$ does not have arbitrarily large finite subgroups.
\end{enumerate}
\end{brough}
The main contributions of this paper lies in the fruitful interplay between group theory and graph theory. By leveraging this connection, we employ graph-theoretic techniques as a central tool in our construction. These techniques allow us to translate algebraic properties of groups into a combinatorial framework, which in turn enables a precise analysis of the associated formal languages. The deep interaction between graph structures, context-free languages and groups is well-established in the literature, as demonstrated by the aforementioned works in \cite{Tullio,RodContextfree, muahu,muahu2} and further evidenced by \cite{Lindorfer-Woess,Woess}.
In particular, starting from a subclass of tree-like graphs, namely, context-free inverse graphs $\Gamma$, which are, roughly speaking, deterministic involutive labeled digraphs with finitely many end-cones (see Section~\ref{sec:CF}), we define a new group, called the \emph{context-free transition group} $\mathcal{G}(\Gamma)$. This group is generated by the permutations of the vertices induced by the labeled directed edges of $\Gamma$. We then extend this construction to a finite collection of disjoint context-free graphs $\Gamma^{(1)},\ldots,\Gamma^{(k)}$, and denote the class of all such groups by {\bf CF-TR}. Although this class has a concrete combinatorial description, its algebraic properties remain largely unexplored, making it an intriguing subject of study in its own right. The paper is organized as follows.
Sections \ref{sec:preliminaires} and \ref{sec:CF}  are dedicated to give some background on inverse graphs and to recall the characterization of context-free inverse graphs. In Section~\ref{sec:transition-groups} we introduce our class of groups and prove some closure properties which allow us to describe any finitely generated subgroup of a virtual direct product of free groups as a transition group. Moreover, we prove the following result that frames our research in the context of the Lehnert Conjecture. 

\medskip
\noindent \textbf{Theorem~\ref{theo: co-cf transition group of more graphs}.}
\textit{Let $\Gamma^{(1)},\ldots,\Gamma^{(k)}$ be context-free inverse $A$-digraphs that are also complete, then the word problem of $\mathcal{G}(\Gamma^{(1)} \sqcup \ldots \sqcup\Gamma^{(k)})$ is a co-context-free language.}
\medskip

It is well-known that Grigorchuk's group and $\mathbb{Z}*\mathbb{Z}^{2}$ (see \cite{BleakOlga}) are two possible counterexamples to \hyperref[thm:lehnert]{Lehnert Conjecture}. Even though the first is not a transition group by virtue of the following result proved in Section~\ref{sec:torsion}.

\medskip
\noindent \textbf{Corollary~\ref{theo: torsion}.}
\textit{No infinite context-free transition group contains an infinite finitely generated torsion subgroup.}
\medskip

In the same section, in particular in Proposition~\ref{prop: order properties}, we show some upper bound of the order of an element with respect to its length, as a consequence we reprove the solvability of the torsion problem, which is already known for the broader class of co-context-free groups (see \cite{holt}). In Section~\ref{sec:poly-cf-transition}, we introduce the subclass of poly-inverse-context-free groups and establish some of their structural properties. In particular, we show that they can be realized as transition groups of context-free graphs.

\medskip
\noindent \textbf{Theorem~\ref{theo: poly with inverse graphs implies cf-trans}.}
\textit{Let $G$ be a group whose word problem is the intersection of context-free languages accepted by rooted inverse graphs. Then $G$ is in {\bf CF-TR}.}
\medskip

By assuming the graphs to be highly symmetric, that is, quasi-transitive, we are able to characterize their transition groups:

\noindent \textbf{Theorem~\ref{theo: virtually subgroups}.}
\textit{A group $G$ is the transition group of a collection of quasi-transitive context-free inverse $A$-digraphs if and only if $G$ is virtually a finitely generated subgroup of the direct product of free groups.}
\medskip

This last theorems fit into the framework of Brough's Conjectures. 
In \cite{GNS}, the rational group $\mathcal{R}$ of homeomorphisms of the Cantor set, described by sequential transducers (see the beginning of Section~\ref{sec:rationality}), was introduced. Interesting examples of groups embedding into $\mathcal{R}$ include the Thompson group $V$ and any hyperbolic group (\cite{BelkBleakMatu}). In this paper, we prove the following embedding theorem.

\medskip
\noindent \textbf{Theorem~\ref{theo:rational}.}
\textit{Let $G$ be a group in {\bf CF-TR}. Then $G$ embeds into the rational group $\mathcal{R}$.}
\medskip

In Section~\ref{subsec: perturbing}, we study how small local modifications of a graph affect its transition group, addressing the natural question of how local changes on the graph can perturb the global algebraic structure. A key result concerns the transition group $G$ of a locally quasi-transitive context-free inverse graph $\Gamma$ with infinite orbits. Roughly speaking, Theorem~\ref{theo: structural locally quasi-trans} shows that $G$ decomposes into a well-understood part and a “small” torsion part: there is a short exact sequence
\[
\mathds{1} \longrightarrow \mathrsfs{B}_{n} \longrightarrow G \longrightarrow H \longrightarrow \mathds{1},
\]
where $H$ is virtually a finitely generated subgroup of a direct product of free groups, and $\mathrsfs{B}_{n}$ is a torsion normal subgroup with bounded-order elements. In other words, $G$ is an extension of a highly structured group by a torsion subgroup, showing how local perturbations imposes strong global constraints on the transition group.

Motivated by the fact that the direct product of two transition groups is again a transition group (see Section~\ref{sec:transition-groups}), one may naturally ask whether the same holds for the free product. Although this question remains open, and a positive answer could solve Lehnert’s Conjecture, in Section~\ref{sec:free-product} we introduce a notion of free product for context-free graphs and show that the resulting product graph remains context-free (see Theorem~\ref{theo: free product is context-free}). Building on this construction and partially leveraging the techniques from Section~\ref{subsec: perturbing}, Section~\ref{sec: non-residually} presents three notable examples of context-free transition groups.
\begin{enumerate}
    \item A non-residually finite example. 
    \item The group $\mathbb{Z}\wr\mathbb{Z}$ which is non-finitely presented, solvable and not poly-context-free. In particular, this shows that the class of poly-context-free and \textbf{CoCF} are different.
    \item A group with an infinite torsion subgroup.
\end{enumerate}

Moreover, since we provide a non-residually finite transition group (see Section~\ref{sec: non-residually}), it is worth asking whether the class of context-free transition groups fully contains poly-context-free groups or serves as a tool to construct non-residually finite poly-context-free groups, which would disprove Brough's Conjecture~\ref{cj: Brough1}.

%%%%%%%%%%%%%%%%%%%%%%%%%%%%%%%%%%%%%%%%%%%%%%%%%%%%%%%%%%%%%%%%%

\section{Involutive and inverse labeled digraphs}\label{sec:preliminaires}

Let $A = \{a_1, \dots, a_n\}$ be a finite \emph{alphabet}. We assume throughout that $A$ is \emph{involutively closed}, meaning that $a \in A$ implies $a^{-1} \in A$. The free monoid on $A$ is denoted $A^*$, with identity $1$, and $A^+$ denotes the free semigroup. A word $w \in A^*$ is said to be \emph{reduced} if it contains no factor of the form $uu^{-1}$ for some $u \in A$. Every word $w \in A^*$ has a unique reduced form $\overline{w}$, obtained iteratively by deleting all such factors. The set $R(A)$ of reduced words is a regular language. The free group $F_A$ can be identified with $R(A)$, where the product is given by $u \cdot v = \overline{uv}$. With a slight abuse of notation we will denote the identity of $F_A$ by $1$, while for a generic group $G$, we will use the notation $\mathds{1}_G$.

\begin{defn}[Inverse graph]
An \emph{$A$-digraph} is a tuple $\Gamma = (V, E, \iota, \tau, \lambda)$ consisting of a vertex set $V$, directed edges $E$, and maps $\iota, \tau: E \to V$ giving the initial and terminal vertices of each edge, together with a labeling map $\lambda: E \to A$. We write $v_1 \mapright{a} v_2$ to denote an edge $e \in E$ with $\iota(e) = v_1$, $\tau(e) = v_2$ and $\lambda(e)=a$.
An \emph{involutive $A$-digraph} is a tuple $\Gamma = (V, E, \iota, \tau, \lambda, \cdot^{-1})$ such that $(V, E, \iota, \tau, \lambda)$ is an $A$-digraph and $\cdot^{-1}: E \to E$ is an involution (i.e., $(e^{-1})^{-1} = e$ for all $e \in E$) satisfying:
\begin{enumerate}
    \item $\lambda(e^{-1}) = \lambda(e)^{-1}$ for all $e \in E$;
    \item $\iota(e^{-1}) = \tau(e)$ and $\tau(e^{-1}) = \iota(e)$;
    \item $e \neq e^{-1}$ for all $e \in E$.
\end{enumerate}
Moreover, we say that $\Gamma$ is:
\begin{itemize}
    \item \emph{deterministic} if for each $v \in V$ and $a \in A$, there is at most one outgoing edge $v \mapright{a} v'$;
    \item \emph{complete} if for every $v\in V$ and $a\in A$ there is at least one outgoing $a$-edge.
\end{itemize}
An involutive $A$-digraph is called \emph{inverse} if it is deterministic and connected. 
\end{defn}
 A subgraph $\Gamma' = (V',E',\iota,\tau,\lambda, \cdot^{-1})$ of $\Gamma$ satisfies $V'\subseteq V$ and $E'\subseteq E$, and $e^{-1}\in E$ for all $e\in E'$, and we write $\Gamma' \subseteq \Gamma$. From now on, we will refer to inverse $A$-digraphs simply as $A$-graphs, and whenever the alphabet $A$ does not need to be specified, we will simply write graphs.

\begin{defn}
A \emph{walk} in $\Gamma$ is a finite or infinite sequence of edges $p = e_1 e_2 \cdots e_k \cdots$ such that $\tau(e_i)=\iota(e_{i+1})$ for all $i$. For a finite walk $p=e_1\cdots e_k$, the \emph{label} is $\lambda(p)=\lambda(e_1)\cdots\lambda(e_k)\in A^+$. The walk is said to be \emph{reduced} if it contains no subwalk of the form $e e^{-1}$. We denote the initial and terminal vertices of $p$ by $\iota(p)$ and $\tau(p)$; when $\Gamma$ is deterministic and $x\in V$, a word $w\in A^*$ determines at most one walk $x\mapright{w}y$. In case the walk $p$ has the property $\iota(p)=\tau(p)$, $p$ is called a \emph{circuit}. 
\end{defn}
When we fix a vertex $x_0$ in $\Gamma$, the pair $(\Gamma, x_0)$ is called a \emph{rooted graph}. In this case, we see $(\Gamma, x_0)$ as a \emph{language acceptor}
by using the distinguished vertex $x_0$ as an initial and final state. In this case, the language accepted by $(\Gamma,x_0)$ is given by the set $L(\Gamma,x_0)$ formed by the labels of all circuits with initial vertex $x_0$. Note that $L(\Gamma,x_0)$ together with the usual operation of concatenation, is also a submonoid of the free monoid $A^*$. More generally, an \emph{$A$-automaton} has an initial state and a set $F\subseteq V(\Gamma)$ of final states, the language $L(\Gamma, x_0,F)$ is the set formed by the labels of the walks $p$ with $\iota(p)=x_0$, $\tau(p)\in F$. In case $\Gamma$ is finite, $L(\Gamma, x_0,F)$ belongs to the class of regular languages \cite{hop}.\\
The class of $A$-graphs forms a category where a morphism 
\(\varphi: \Gamma^{(1)} \to \Gamma^{(2)}\) between the graphs
\(\Gamma^{(1)}=(V_1,E_1,\iota_1, \tau_1, \lambda_1, \cdot^{-1})\) to 
\(\Gamma^{(2)}=(V_2,E_2,\iota_2, \tau_2, \lambda_2,\cdot^{-1})\) is a pair of maps 
\(\varphi_e: E_1 \to E_2\), \(\varphi_v: V_1 \to V_2\) such that:  
\begin{itemize}
    \item \(\varphi\) respects the involution: \(\varphi_e(g^{-1}) = \varphi_e(g)^{-1}\) for all \(g \in E_1\);
    \item \(\varphi\) commutes with the incidence and labeling maps:
    \[
    \tau_2(\varphi_e(g)) = \varphi_v(\tau_1(g)), \quad
    \iota_2(\varphi_e(g)) = \varphi_v(\iota_1(g)), \quad
    \lambda_2(\varphi_e(g)) = \lambda_1(g).
    \]
\end{itemize}
Given an (inverse) subgraph \(\Lambda \subseteq \Gamma^{(1)}\), its image \(\varphi(\Lambda)\) is the (inverse) subgraph of \(\Gamma^{(2)}\) with vertices \(\varphi_v(V(\Lambda))\) and edges \(\varphi_e(E(\Lambda))\). An isomorphism (resp.\ monomorphism) is a morphism \(\varphi\) such that both \(\varphi_v\) and \(\varphi_e\) are bijective (resp.\ injective).
\medskip

\noindent\textbf{Metric structure.}
If $\Gamma$ is connected, we endow $V(\Gamma)$ with the usual path metric
\[
d(u,v)=\min\{\,\ell(p): p \text{ is a walk from } u \text{ to } v\,\},
\]
where $\ell(p)$ denotes the length (number of edges) of $p$. A walk realizing the minimum is called a \emph{geodesic}. For $v\in V(\Gamma)$ and $n\ge0$ we denote by
\[
D_n(v)=\{\,u\in V(\Gamma): d(v,u)\le n\,\}
\qquad\text{and}\qquad
S_n(v)=\{\,u\in V(\Gamma): d(v,u)=n\,\}
\]
the \emph{disk} (ball) and the \emph{sphere} of radius $n$, respectively, centered at $v$. For a connected subgraph $Y\subseteq\Gamma$ the diameter is
\[
\delta_\Gamma(Y)=\max\{d(y_1,y_2):y_1,y_2\in V(Y)\}.
\]
In interesting feature of inverse graphs is given by the following.
\begin{lemma}\label{lem: there are reduced walks}
Let $\Gamma$ be an $A$-graph. For any vertex $v\in V(\Gamma)$ and word $u\in A^*$ there is at most one walk $v\mapright{u}y$ labeled by $u$. Moreover, if such a walk exists then there is also a (reduced) walk $v\mapright{\overline{u}}y$ labeled by the free reduction $\overline{u}$ of $u$.
\end{lemma}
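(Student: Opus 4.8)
The plan is to prove both assertions by exploiting determinism. For the first claim, suppose for contradiction that there are two distinct walks $v\mapright{u}y$ and $v\mapright{u}y'$ labeled by the same word $u=a_1\cdots a_k$. I would argue by induction on the length $k$ of $u$. If $k=0$ the walks are trivial and share the endpoint $v$, so they coincide. For the inductive step, write $u = a_1 u'$; each walk begins with an edge out of $v$ labeled $a_1$. By determinism there is at most one such edge, so both walks start with the identical first edge $v\mapright{a_1}w$. The remaining subwalks are then both labeled by $u'$ and start at the common vertex $w$, so by the induction hypothesis they coincide. Hence the two walks are equal, proving uniqueness.

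For the second assertion, suppose there is a walk $x\mapright{u}y$ and let $\oo{u}$ be the free reduction of $u$. I would argue that whenever $u$ contains a cancelling factor $aa^{-1}$ (with $a\in A$), the corresponding piece of the walk is $z\mapright{a}z'\mapright{a^{-1}}z''$. The edge labeled $a^{-1}$ is, by the involution axioms, the inverse of an edge labeled $a$ entering $z''$ and leaving $z'$; since the graph is deterministic there is a unique edge labeled $a$ out of $z$, and likewise a unique edge labeled $a^{-1}$ out of $z'$, forcing $z''=z$. Thus the subwalk $z\mapright{a}z'\mapright{a^{-1}}z$ returns to its starting vertex and can be excised without changing the endpoints of the overall walk. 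The resulting shorter walk is labeled by the word obtained from $u$ by deleting that cancelling pair.

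I would then iterate this excision. Each removal of a cancelling pair strictly decreases the length of the walk while preserving both endpoints $x$ and $y$, so after finitely many steps the process terminates in a walk whose label contains no factor of the form $aa^{-1}$. By the uniqueness of free reduction recalled in the preliminaries, this terminal label is exactly $\oo{u}$, and by determinism (the first part of the lemma) the terminal walk is the unique reduced walk $x\mapright{\oo{u}}y$. This establishes the existence of the reduced walk with the same endpoints.

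The main obstacle, and the step requiring the most care, is the excision argument in the second part: one must verify that a cancelling pair $aa^{-1}$ in the label genuinely corresponds to an edge immediately followed by its formal inverse, so that the intermediate vertex $z''$ equals $z$. This is where determinism is essential—without it, the two edges labeled $a$ and $a^{-1}$ need not be inverses of one another as edges, and the walk need not return to $z$. Once this local return-to-start fact is pinned down via the involution axioms and determinism, the rest is a routine induction and termination argument.
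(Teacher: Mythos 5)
Your proof is correct, and it matches the paper's (implicit) approach: the paper states this lemma without proof, remarking only that it is ``a direct consequence of the determinism of inverse graphs,'' and your argument is precisely the spelled-out version of that remark—determinism gives uniqueness by induction along the walk, and determinism combined with the involution axiom forces each edge read $a$ followed by $a^{-1}$ to backtrack (since the unique $a^{-1}$-edge out of $z'$ must be $e^{-1}$), so cancelling pairs can be excised until the label is the free reduction $\oo{u}$. The only blemish is a harmless wording slip in describing the inverse edge (it leaves $z''$ and enters $z'$, not the reverse), which does not affect the deduction $z''=z$.
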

%%%%%%%%%%%%%%%%%%%%%

\subsection{Basic facts about morphisms of graphs}
The condition of determinism and connectedness of an inverse graph has some consequences when we consider morphisms in the category of $A$-graphs. For instance, a morphism $\psi:\Gamma\to\Lambda$ is completely determined by the vertex map $\psi_v$. In particular, $\psi$ is a monomorphism if for any $y\in V(\Lambda)$, the preimage $\psi_v^{-1}(y)$ contains at most one element and checking if two morphisms are equal is just a matter of verifying if they agree on a single vertex.
\begin{lemma}\label{lem: uniqueness of automorphism}
Let $\varphi_1, \varphi_2$ be two morphisms from the graph $\Gamma^{(1)}$ to the graph $\Gamma^{(2)}$, then $\varphi_1=\varphi_2$ if and only if $\varphi_1(x)=\varphi_2(x)$ for some $x\in V(\Gamma)$.
\end{lemma}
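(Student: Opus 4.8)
The plan is to exploit determinism and connectedness in a propagation argument. The forward direction is trivial, so the substance is the ``if'' direction: assuming $\varphi_1(x)=\varphi_2(x)$ for a single vertex $x\in V(\Gamma^{(1)})$, I would show $\varphi_1$ and $\varphi_2$ agree on all of $V(\Gamma^{(1)})$, and then invoke the earlier observation that in the category of inverse graphs a morphism is completely determined by its vertex map, so that agreement on vertices forces agreement on edges as well.

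First I would set up the propagation along a single edge. Suppose $v$ is a vertex with $\varphi_1(v)=\varphi_2(v)$, and let $e=v\mapright{a}w$ be any edge of $\Gamma^{(1)}$. Since morphisms commute with the incidence and labeling maps, $\varphi_1(e)$ is an edge of $\Gamma^{(2)}$ of the form $\varphi_1(v)\mapright{a}\varphi_1(w)$, and likewise $\varphi_2(e)$ is an edge $\varphi_2(v)\mapright{a}\varphi_2(w)$. Both edges start at the common vertex $\varphi_1(v)=\varphi_2(v)$ and carry the same label $a$. Because $\Gamma^{(2)}$ is \emph{deterministic}, there is at most one $a$-labeled edge out of a given vertex, so these two edges coincide; in particular their terminal vertices agree, giving $\varphi_1(w)=\varphi_2(w)$.

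Next I would globalize this using connectedness. Fix the vertex $x$ on which the two morphisms agree by hypothesis. Given an arbitrary vertex $y\in V(\Gamma^{(1)})$, connectedness provides a walk $x=x_1\mapright{a_1}x_2\mapright{a_2}\cdots\mapright{a_k}x_{k+1}=y$ in $\Gamma^{(1)}$. Starting from $\varphi_1(x_1)=\varphi_2(x_1)$ and applying the single-edge step along $e_i=x_i\mapright{a_i}x_{i+1}$ inductively, I obtain $\varphi_1(x_{i+1})=\varphi_2(x_{i+1})$ for every $i$, and hence $\varphi_1(y)=\varphi_2(y)$. As $y$ was arbitrary, the vertex maps coincide on all of $V(\Gamma^{(1)})$.

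Finally I would promote equality of vertex maps to equality of morphisms. For any edge $g$ of $\Gamma^{(1)}$ with $\iota(g)=u$ and $\lambda(g)=a$, both $\varphi_1(g)$ and $\varphi_2(g)$ are $a$-labeled edges of $\Gamma^{(2)}$ issuing from the common vertex $\varphi_1(u)=\varphi_2(u)$; determinism of $\Gamma^{(2)}$ again forces $\varphi_1(g)=\varphi_2(g)$. Thus $\varphi_1$ and $\varphi_2$ agree on vertices and on edges, so $\varphi_1=\varphi_2$. I do not expect a genuine obstacle here: the only point requiring care is to state clearly at each step that it is the determinism of the \emph{target} graph $\Gamma^{(2)}$ that is being used, and to make the induction on walk length explicit rather than hand-waved.
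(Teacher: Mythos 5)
Your proof is correct and follows essentially the same route as the paper: both arguments propagate the equality $\varphi_1(x)=\varphi_2(x)$ to all vertices using connectedness of $\Gamma^{(1)}$ together with determinism of the target $\Gamma^{(2)}$, and then deduce agreement on edges by a second application of determinism. The only cosmetic difference is that you make the edge-by-edge induction explicit, whereas the paper phrases the same step directly in terms of the image of a walk labeled by a word $u$.
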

\begin{proof}
For any walk $x\mapright{u}y$, the images under $\varphi_1$ and $\varphi_2$ are $\varphi_1(x)\mapright{u}\varphi_1(y), \varphi_2(x)\mapright{u}\varphi_2(y)$. 
Determinism of $\Gamma^{(2)}$ and $\varphi_1(x)=\varphi_2(x)$ imply $\varphi_1(y)=\varphi_2(y)$. Since $\Gamma^{(1)}$ is connected, $\varphi_1$ and $\varphi_2$ coincide on all vertices, and hence also on all edges.
\end{proof}

Another interesting feature of graphs, and particularly rooted graphs, is the relationship between accepted languages and morphisms. This connection is contained in the following proposition belonging to folklore, see for instance  \cite[Proposition 1]{RodContextfree}. 
\begin{prop}\label{prop: inclusion implies homomorphism}
Let $(\Gamma,x_0), (\Lambda, y_0)$ be two rooted graphs, then $L(\Gamma,x_0)\subseteq L (\Lambda, y_0)$ if and only if there is a morphism $\psi: \Gamma\rightarrow  \Lambda$ with $\psi(x_0)=y_0$. Moreover, $L(\Gamma,x_0)= L (\Lambda, y_0)$ if and only if there is an isomorphism $\psi: \Gamma\rightarrow  \Lambda$. 
\end{prop}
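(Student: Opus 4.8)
The plan is to prove the two equivalences in turn, relying on the rigidity of inverse graphs recorded in Lemmas~\ref{lem: there are reduced walks} and~\ref{lem: uniqueness of automorphism}. The implication that is essentially free is the direction producing language inclusions from morphisms: given a morphism $\psi:\Gamma\to\Lambda$ with $\psi(x_0)=y_0$ and any $w\in L(\Gamma,x_0)$, there is a circuit $x_0\mapright{w}x_0$ in $\Gamma$, and applying $\psi$ edge-by-edge yields a circuit $\psi(x_0)\mapright{w}\psi(x_0)$, that is $y_0\mapright{w}y_0$, in $\Lambda$; hence $w\in L(\Lambda,y_0)$ and the inclusion $L(\Gamma,x_0)\subseteq L(\Lambda,y_0)$ follows. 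The whole content of the statement is therefore the reverse implication, where from the inclusion of languages I must manufacture a morphism.

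For the converse I would define $\psi$ first on vertices and then on edges. Since $\Gamma$ is connected, every $v\in V(\Gamma)$ admits a walk $x_0\mapright{u}v$ for some $u\in A^*$, and I set $\psi(v)$ to be the terminal vertex of the walk $y_0\mapright{u}\psi(v)$ in $\Lambda$. Two things must be checked. First, \emph{existence}: the walk $y_0\mapright{u}\,\cdot$ must actually exist in $\Lambda$, and this is where the hypothesis enters, since the word $uu\inv$ labels the circuit $x_0\mapright{u}v\mapright{u\inv}x_0$, so $uu\inv\in L(\Gamma,x_0)\subseteq L(\Lambda,y_0)$, giving a circuit $y_0\mapright{uu\inv}y_0$ and in particular a walk $y_0\mapright{u}z$. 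Second, \emph{well-definedness}: if $x_0\mapright{u}v$ and $x_0\mapright{u'}v$ are two walks to the same vertex, then $u(u')\inv$ labels the circuit $x_0\mapright{u}v\mapright{(u')\inv}x_0$, so $u(u')\inv\in L(\Gamma,x_0)\subseteq L(\Lambda,y_0)$ produces a circuit $y_0\mapright{u}z\mapright{(u')\inv}y_0$; reversing its second half gives $y_0\mapright{u'}z$, and by the determinism of $\Lambda$ (Lemma~\ref{lem: there are reduced walks}) both access words $u,u'$ lead from $y_0$ to the same vertex $z$. Thus $\psi$ is a well-defined vertex map with $\psi(x_0)=y_0$.

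To extend $\psi$ to edges, I would send an edge $e=v_1\mapright{a}v_2$ to the unique edge $\psi(v_1)\mapright{a}\psi(v_2)$ of $\Lambda$; its existence is immediate because if $x_0\mapright{u}v_1$ then $x_0\mapright{ua}v_2$, so by construction $y_0\mapright{u}\psi(v_1)\mapright{a}\psi(v_2)$ already exhibits the required edge. Compatibility with $\iota,\tau,\lambda$ holds by construction, and compatibility with the involution, $\psi(e\inv)=\psi(e)\inv$, follows from the uniqueness of edges in the deterministic graph $\Lambda$. This yields the desired morphism. For the ``moreover'' part, assume $L(\Gamma,x_0)=L(\Lambda,y_0)$; the first equivalence gives morphisms $\psi:\Gamma\to\Lambda$ and $\varphi:\Lambda\to\Gamma$ with $\psi(x_0)=y_0$ and $\varphi(y_0)=x_0$. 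Then $\varphi\circ\psi$ and the identity both fix $x_0$, so Lemma~\ref{lem: uniqueness of automorphism} forces $\varphi\circ\psi=\mathrm{id}_{\Gamma}$, and symmetrically $\psi\circ\varphi=\mathrm{id}_{\Lambda}$; hence $\psi$ is an isomorphism carrying $x_0$ to $y_0$. The reverse direction applies the first equivalence to $\psi$ and to $\psi\inv$.

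The main obstacle, and the only genuinely delicate point, is the well-definedness of the vertex map: I must be sure the endpoint of $y_0\mapright{u}\,\cdot$ in $\Lambda$ does not depend on the chosen access word $u$, and this is precisely where the language inclusion is used essentially, through the circuit words $u(u')\inv$, together with the determinism guaranteed by Lemma~\ref{lem: there are reduced walks}. Everything else is bookkeeping that follows formally from determinism and connectedness.
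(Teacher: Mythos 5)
Your proof is correct, and it is precisely the standard folklore argument that the paper itself does not reproduce but delegates to the literature (the proposition is stated with a pointer to \cite[Proposition 1]{RodContextfree}): define $\psi$ on vertices via access words $x_0\mapright{u}v$, use the circuit words $uu\inv$ and $u(u')\inv$ together with the language inclusion for existence and well-definedness, let determinism handle the edge map and the involution, and derive the isomorphism statement from Lemma~\ref{lem: uniqueness of automorphism} applied to $\varphi\circ\psi$ and the identity. The only caveat is cosmetic and lies in the paper's statement rather than in your argument: the ``moreover'' part should be read as asserting a \emph{root-preserving} isomorphism (i.e., $\psi(x_0)=y_0$), which is what your proof both produces and uses in the converse direction.
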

Note that by the previous proposition, we have that, up to isomorphism, there is a unique rooted graph accepting the language $L(\Gamma,x_0)$. 
The set of automorphisms of the graph $\Gamma$ will be denoted by $\Aut(\Gamma)$. This is a group acting on $\Gamma$ in a canonical way. In case the quotient graph $\Gamma \setminus \Aut(\Gamma)$ is finite, we say that $\Gamma$ is \emph{quasi-transitive}. Observe that this last condition is equivalent to the finiteness of $V(\Gamma)\setminus \Aut(\Gamma)$. 
Any morphism $\varphi:\Gamma\to\Lambda$ between two graphs induces a bijections $\varphi:\Star(\Gamma,v)\to \Star(\Gamma',\varphi(v)) $ between the star sets in the obvious ways. Thus, morphisms of graphs are \emph{immersions} of graphs as defined by Stallings \cite{Stall}. In particular, if the graphs are complete, by Lemma~\ref{lem: there are reduced walks} every morphism between two complete $A$-graphs is a \emph{covering}, that is, an immersion of graphs $f: \Gamma \to \Lambda$ so that $f$ maps the sets $V(\Gamma)$ and $E(\Gamma)$ respectively onto $V(\Lambda)$ and $E(\Lambda)$. One central feature of coverings is the possibility of lifting walks, this is contained in the following characterization. 
\begin{prop}[Proposition 2.3 of \cite{Gr-Meak}] \label{prop: lifting of coverings}
Let $\varphi:\Gamma'\to\Gamma$ be a covering of graphs. Then, for each vertex $v\in V(\Gamma)$ and vertex $v'\in \varphi^{-1}(v)$, every walk $p$ in $\Gamma$ with $\iota(p)=v$ lifts to a unique walk $p'$ starting at $v'$, i.e., $\varphi(p')=p$. 
\end{prop}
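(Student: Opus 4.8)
The plan is to prove the statement by induction on the length of the walk $p$, handling uniqueness and existence of the lift separately. The essential structural input is that, in the setting of the preceding discussion, $\varphi$ is a covering of \emph{complete} inverse graphs, so the induced map on stars $\varphi\colon\Star(\Gamma',w')\to\Star(\Gamma,\varphi(w'))$ is a \emph{bijection} for every vertex $w'\in V(\Gamma')$; injectivity comes from determinism and surjectivity from completeness.

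For uniqueness I would argue directly from determinism, bypassing the induction entirely. Any lift $p'$ of $p$ is by definition a walk in $\Gamma'$ with $\iota(p')=v'$, and since $\varphi$ preserves labels and $\varphi(p')=p$, we have $\lambda(p')=\lambda(p)$. Thus $p'$ is a walk in $\Gamma'$ starting at $v'$ and labelled by the fixed word $u:=\lambda(p)$. By Lemma~\ref{lem: there are reduced walks}, applied to the inverse graph $\Gamma'$, there is at most one such walk, so the lift, if it exists, is unique.

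For existence I would induct on $k=|p|$. If $k=0$ then $p$ is the trivial walk at $v$ and its lift is the trivial walk at $v'$. For the inductive step, write $p=q\,e$, where $q$ is the length-$(k-1)$ prefix ending at a vertex $w=\iota(e)$ and $e$ is the final edge. By the inductive hypothesis $q$ lifts to a walk $q'$ starting at $v'$ and ending at some $w'\in\varphi^{-1}(w)$, so that $\varphi(w')=w$. Because $\varphi\colon\Star(\Gamma',w')\to\Star(\Gamma,w)$ is a bijection and $e\in\Star(\Gamma,w)$, there is a unique edge $e'\in\Star(\Gamma',w')$ with $\varphi(e')=e$; in particular $\iota(e')=w'=\tau(q')$, so $p':=q'\,e'$ is a well-defined walk in $\Gamma'$ starting at $v'$ with $\varphi(p')=p$, completing the induction.

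The main obstacle, and really the only place where anything beyond formal bookkeeping is required, is the existence of the edge-lift $e'$ at each step: this is precisely where completeness enters, since for a general immersion of inverse graphs the star map is merely injective and an edge of $\Gamma$ need not lie in its image. Once completeness forces surjectivity on stars, the argument collapses to the routine induction above. I would also remark that the same construction lifts an infinite one-sided walk $e_1e_2\cdots$ by lifting its finite prefixes compatibly and taking the union, consistency being automatic from the uniqueness established above.
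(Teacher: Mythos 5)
Your proof is correct, and it is the standard unique-lifting argument: uniqueness from determinism, existence by induction, lifting one edge at a time through the star maps. Note that the paper gives no proof of this proposition at all --- it is quoted as Proposition 2.3 of \cite{Gr-Meak} --- so there is no internal argument to compare against; your write-up is essentially what that reference (and any covering-space argument) does. One point you handled well and is worth making explicit: as literally stated, the proposition concerns an arbitrary morphism of inverse graphs, and for such a morphism only the uniqueness half survives, since a morphism of inverse graphs is merely an immersion (star maps injective by determinism), and an edge of $\Gamma$ need not lie in the image of a star map --- the inclusion of a proper subgraph already gives a counterexample to existence. You correctly identified that the existence half needs the covering/completeness hypothesis from the paragraph preceding the proposition, under which the star maps $\Star(\Gamma',w')\to\Star(\Gamma,\varphi(w'))$ become bijections; your argument for surjectivity (completeness of $\Gamma'$ gives an edge at $w'$ with label $\lambda(e)$, and determinism of $\Gamma$ forces its image to be $e$) is exactly right. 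This reading is consistent with the proposition's name, with the cited source, and with the only place the paper invokes it, namely lifting circuits through a transition-preserving covering of complete $A$-digraphs; so your proof establishes the statement in the generality in which the paper actually needs and uses it. Your uniqueness shortcut via Lemma~\ref{lem: there are reduced walks} (a lift is a walk in $\Gamma'$ from $v'$ with the prescribed label, and a deterministic graph admits at most one such walk) is also clean and avoids carrying uniqueness through the induction.
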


\section{Context-free languages and graphs} \label{sec:CF}
There are several ways to define a context-free language, either via the notion of grammar, or using the notion of pushdown automaton. For the sake of conciseness, we briefly recall the main notions that are used in the paper, and we refer the reader to \cite{hop} for further details. 
\begin{defn}
A \emph{context-free grammar} is a tuple $(V,\Sigma,P,S)$ where $V$ is a finite set of non-terminal symbols (or variables), $\Sigma$ is the finite alphabet of terminal symbols that is disjoint from $V$, $S\in V$ is the starting symbol, and  $P\subseteq V\times (V\cup \Sigma)^*$ is a finite set called productions.
\end{defn}
Usually production are denoted by $X\Rightarrow \alpha$, $\alpha\in (V\cup \Sigma)^*, X\in V$.  We extend $\Rightarrow $ by putting $\alpha X \gamma\Rightarrow \alpha\beta\gamma$ whenever $X\Rightarrow \beta\in P$, and we denote by $\Rightarrow ^*$ the reflexive and transitive closure of $\Rightarrow $. The language generated by the grammar $(V,\Sigma,P,S)$ is $L(S)=\{u\in \Sigma^*: S\Rightarrow^*u\}$, and in this case we say that it is a \emph{context-free language}. A $L$ language with a context-free complement $\Sigma^* \setminus L$ is called \emph{co-context-free}, we will denote the class of context-free languages as \emph{CoCF}.\
 
In their paper, Muller and Schupp called a finitely generated group $G$ presented by
$\langle A \mid \mathcal{R} \rangle$ \emph{context-free} if the word problem $WP(G;A) = \{\, w \in A^* \mid \pi(w) = \mathds{1}_G\}$ is a context-free language, where $\pi : A^* \to G$ denotes the natural projection. With our notation, this language coincides with the set $L(\Cay(G;A),\mathds{1}_G)$ of words labeling a circuit starting and ending at the group identity. 
\begin{defn}
A rooted graph $(\Gamma, x_0)$ is called \emph{context-free} if $L(\Gamma, x_0)$ is a context-free language.
\end{defn}
This definition is equivalent to a more geometric one that involves the notion of end-cones. We briefly recall it. Henceforth, we denote by $d(x,y)$ the usual distance in the graph $\Gamma$, and put $\|v\|_{x_0}=d(x_0,v)$. In \cite{muahu2} Muller and Schupp introduced the central notion of end-cone together with the idea of end-isomorphism of such subgraphs. For a vertex $v$ with $\|v\|_{x_0}=n$, the \emph{end-cone at $v$}  is the subgraph $\Gamma(v,x_0)$ that is the connected component of $\Gamma\setminus D_{n-1}(x_0)$ containing $v$, and we denote by $\Delta(v,x_0)=\{y\in \Gamma(v,x_0): \|y\|_{x_0}=n\}$ the set of \emph{frontier vertices} of $\Gamma(v,x_0)$, that is, the vertices of $\Gamma(v,x_0)$ at distance $n$ from the root $x
_0$. An \emph{end-isomorphism} $\psi:\Gamma(v,x_0)\to \Gamma(w,x_0)$ between the end-cones $\Gamma(v,x_0), \Gamma(w,x_0)$ is an isomorphism of graphs with the property of preserving their frontier vertices: $\psi(\Delta(v,x_0))=\Delta(w,x_0)$. 

\begin{defn}
The rooted graph $(\Gamma, x_0)$ is called \emph{a context-free graph in the sense of Muller-Schupp} if up to end-isomorphism there are finitely many end-cones, i.e., there are graphs $\Gamma_1, \ldots, \Gamma_N$ such that for any end-cone $\Gamma(v,x_0)$ of $\Gamma$ there is an end-isomorphism $\psi:\Gamma(v,x_0)\to \Gamma_j$ for some $j\in [1,N]$.
\end{defn}
The previous two notions are equivalent: 
\begin{prop}[Proposition~5 of \cite{RodContextfree}] \label{prop: charact context-free}
For a graph $\Gamma$ the following conditions are equivalent:
\begin{enumerate}
\item for any non-empty finite set $F\subseteq V(\Gamma)$ the language $L(\Gamma, x_0,F)$ is deterministic context-free;
\item the language $L(\Gamma,x_0)$ is deterministic context-free, hence $(\Gamma, x_0)$ is context-free;
\item the rooted graph $(\Gamma, x_0)$ is context-free in the sense of Muller-Schupp;
\item for any vertex $y_0\in V(\Gamma)$, the graph $(\Gamma, y_0)$ is context-free in the sense of Muller-Schupp.
\end{enumerate}
\end{prop}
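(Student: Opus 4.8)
The plan is to prove the cyclic chain $(1)\Rightarrow(2)\Rightarrow(3)\Rightarrow(4)\Rightarrow(1)$ and to treat $(3)\Leftrightarrow(5)$ separately, since $(5)$ is merely the root-independent reformulation of $(3)$. Two of the four arrows in the cycle are essentially bookkeeping. For $(1)\Rightarrow(2)$ I would take $F=\{x_0\}$: a walk from $x_0$ to $x_0$ is exactly a circuit at $x_0$, so $L(\Gamma,x_0,\{x_0\})=L(\Gamma,x_0)$, and context-freeness of the former is by definition context-freeness of $(\Gamma,x_0)$. For $(4)\Rightarrow(1)$ I would transport $F$ through the given isomorphism $\Phi\colon(\Gamma,x_0)\to(\Gamma(M),(q_0,\perp))$ to a finite set of configurations $\{(p_1,\gamma_1),\dots,(p_m,\gamma_m)\}$, each $\gamma_i\in\Theta^*$ a fixed finite stack word. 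Then $L(\Gamma,x_0,F)$ is the set of inputs $w$ driving $M$ from $(q_0,\perp)$ to some $(p_i,\gamma_i)$, which is accepted by an ordinary (not necessarily real-time) pushdown automaton that runs $M$ and then, using $1$-moves, pops and checks the stack against one of the finitely many target words $\gamma_i$, accepting only on an exact match down to $\perp$. Hence $L(\Gamma,x_0,F)$ is context-free.

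The substance lies in the two remaining arrows. For $(2)\Rightarrow(3)$ I would invoke the structural heart of the Muller--Schupp theory: realizing $L(\Gamma,x_0)$ by a pushdown automaton, the end-isomorphism type of an end-cone $\Gamma(v,x_0)$ is governed only by a bounded amount of data, namely the reachable configuration at $v$ read modulo the portion of the stack that can no longer influence walks staying outside $D_{\|v\|_{x_0}-1}(x_0)$; the context-free pumping lemma then bounds the number of such data, hence the number of end-cone types. For $(3)\Rightarrow(4)$ I would first use the Muller--Schupp correspondence recalled above to produce some pushdown automaton $M_0$ with $\Gamma(M_0)\cong(\Gamma,x_0)$, and then upgrade $M_0$ to an \emph{inverse} PDA $M$. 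This upgrade is where the hypothesis that $\Gamma$ is inverse is used decisively: determinism of $\Gamma$ lets the positive part $\delta^+$ of the transition function be well defined, while co-determinism of $\Gamma$ (forced by the involution together with determinism) is exactly what makes the reverse transition $\delta^-$ a function, so that $M$ is reversible and its configurations graph is the inverse graph $\Gamma$. Concretely I would let the stack encode the nested sequence of end-cone types met along a geodesic from $x_0$ to the current vertex, pushing when the radius grows and popping when it shrinks.

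Finally, for $(3)\Leftrightarrow(5)$, the implication $(5)\Rightarrow(3)$ is the special case $y_0=x_0$. For $(3)\Rightarrow(5)$ I would fix $y_0$ and set $r=d(x_0,y_0)$. The balls $D_n(x_0)$ and $D_n(y_0)$ differ only inside the fixed radius-$r$ neighbourhood of the two base points, so for a vertex $v$ with $\|v\|_{y_0}$ large the component $\Gamma(v,y_0)$ of $\Gamma\setminus D_{\|v\|_{y_0}-1}(y_0)$ is end-isomorphic to an $x_0$-end-cone; only the finitely many cones of small $y_0$-radius can behave differently. Combined with the finiteness of $x_0$-cone types from $(3)$, this yields finitely many $y_0$-cone types, i.e.\ $(5)$.

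I expect the principal obstacle to be the construction of the inverse PDA in $(3)\Rightarrow(4)$: classical Muller--Schupp only delivers a deterministic, not a reversible, automaton, and the real work is to show that the end-cone combinatorics of an inverse graph can be encoded on the stack in a way that is simultaneously deterministic forwards and backwards, so that $\delta^-$ is genuinely the reverse of $\delta^+$. The structural direction $(2)\Rightarrow(3)$ is also nontrivial, but it is the by-now-standard part of the argument.
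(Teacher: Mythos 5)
You should first be aware that the paper contains no proof of this statement at all: it is imported verbatim as Proposition~7 of \cite{RodContextfree}, so your proposal can only be judged on its own merits and against the argument in that cited source. Your skeleton --- the cycle $(1)\Rightarrow(2)\Rightarrow(3)\Rightarrow(4)\Rightarrow(1)$ plus $(3)\Leftrightarrow(5)$ --- is a sensible decomposition, and the two bookkeeping arrows are correct: $(1)\Rightarrow(2)$ via $F=\{x_0\}$, and $(4)\Rightarrow(1)$ via a nondeterministic PDA (with a fresh bottom marker) that simulates $M$ and then uses $1$-moves to pop and verify the target configuration exactly.

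The genuine gap is in $(2)\Rightarrow(3)$. You appeal to ``the reachable configuration at $v$ read modulo the portion of the stack that can no longer influence walks staying outside $D_{\|v\|_{x_0}-1}(x_0)$''. But from $(2)$ you only have a PDA (nondeterministic, in general) \emph{accepting the language} $L(\Gamma,x_0)$; its configurations come with no given correspondence to the vertices of $\Gamma$, and different words labelling walks $x_0\to v$ may reach unrelated configurations. Producing such a correspondence --- realizing $\Gamma$ itself as a configurations graph --- is precisely condition $(4)$, which in your cycle lies downstream of $(3)$: as written, the step begs the question. The non-circular route, which is the one behind Proposition~7 of \cite{RodContextfree} and goes back to Muller--Schupp, is grammatical rather than machine-based: put a grammar for $L(\Gamma,x_0)$ in Chomsky normal form, let $K$ bound the length of a shortest terminal word derivable from each nonterminal, and show every circuit at $x_0$ is $K$-triangulable; the key point, where inverseness of $\Gamma$ enters, is that if $xuy,\,xzy\in L(\Gamma,x_0)$ with $u,z$ derived from the same nonterminal, then applying determinism to the \emph{reversed} suffix $y^{-1}$ forces the two walks labelled $u$ and $z$ to have the same endpoints, so diagonals of the triangulation join vertices at distance at most $K$; bounded triangulations then bound the isomorphism data of an end-cone. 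Two lesser imprecisions: in $(3)\Rightarrow(5)$, for $\|v\|_{y_0}$ large the cone $\Gamma(v,y_0)$ is generally \emph{not} a single $x_0$-cone but a slab of bounded thickness of $x_0$-cone structure with full $x_0$-cones attached, and the finiteness count must be run on such composite pieces; and in $(3)\Rightarrow(4)$ your stack-of-nested-end-cones encoding is indeed the right device (it is essentially the well-formed-word encoding the paper itself uses in Section~\ref{sec:rationality}), but you leave the verification that $\delta^-$ is the exact reverse of $\delta^+$ --- which you yourself flag as the principal obstacle --- undone, so that arrow remains a plan rather than a proof.
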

The languages accepted by context-free graphs can be characterized more precisely through the construction of suitable pushdown automata, called inverse PDA. In \cite{RodContextfree}, such languages are called \emph{inverse-context-free}, and we will use this term throughout the paper.

\section{Transition groups of graphs and co-context-free groups} \label{sec:transition-groups}

In this section, we introduce a new subclass of co-context-free groups.
For an $A$-graph $\Gamma$, the set of \emph{everywhere-circuits} is 
\begin{align}\label{eq:everywhere-circuits}
\mathcal{L}(\Gamma)=\{u\in A^*: \forall p\in V(\Gamma)\,\mbox{ there is a circuit } p\mapright{u}p\mbox{ in } \Gamma\}=\bigcap_{p\in V(\Gamma)} L(\Gamma, p)
\end{align}
It is easy to see that if $\Gamma$ is an $A$-graph that is also complete, then $\mathcal{L}(\Gamma)$ is a normal subgroup of $F_A$. The following is a key notion in our paper.
\begin{defn}
 For a complete $A$-graph $\Gamma$, each letter $a\in A$ defines a permutation on the set of vertices $a: V(\Gamma)\to V(\Gamma)$. The \emph{transition group} $\mathcal{G}(\Gamma)$ of the graph $\Gamma$ is the group generated by composing all these permutations. 
\end{defn}
\begin{rem}
    Note that this definition is a specialization of the well-known notion of transition monoid for an automaton to the case of inverse automata. This is also isomorphic to the syntactic monoid of the language $L=L(\Gamma, x_0)$ (where $x_0$ is any chosen vertex of $\Gamma$). Indeed, it is not difficult to see that if one considers the syntactic congruence $\sim_{\Gamma}$ on $A^*$ defined by $u\sim_{L} v$ if for all $x,y\in A^*$ $xuy\in L$ if and only if $xvy\in L$, then since $\Gamma$ is inverse, we have that $A^*/_{\sim_{L}}$ is isomorphic to $\mathcal{G}(\Gamma)$. If we remove the completeness condition, the syntactic monoid is an inverse monoid, see \cite{eil76, Saka}. 
\end{rem}
The group $\mathcal{G}(\Gamma)$ may be equivalently obtained by considering the quotient of the free group $F_A$ generated by $A$ by the normal subgroup $\mathcal{L}(\Gamma)$. There is a natural right action of $\mathcal{G}(\Gamma)$ on the set of vertices $V(\Gamma)$: for any $g\in \mathcal{G}(\Gamma)$ and vertex $v\in V(\Gamma)$, $v\cdot g\in V(\Gamma)$ is the vertex for which there is a walk $v\mapright{w} (v\cdot g)$ in $\Gamma$ for any $w\in A^*$  that represents $g$. We sometimes also put $v\cdot w=v\cdot g$ for any $w\in A^*$ representing $g$ in $\mathcal{G}(\Gamma)$.

\begin{example}\label{thm:example-Z-finite}
For any group $G$ on the symmetric generating set $A$, it is straightforward to check that the transition group of the Cayley graph $\Cay(G;A)$ is isomorphic to $G$. In particular, if $\Gamma$ is either a finite $n$-cycle or 
an infinite line and by suitably labeling all edges either by $a$ or $a^{-1}$ from a single-letter alphabet $A=\{a\}$,
we can turn $\Gamma$ into a complete graph so that its
transition group is readily seen to be either isomorphic to $\mathbb{Z}/n\mathbb{Z}$ or $\mathbb{Z}$, so that all these groups are transition groups. 
\end{example}

More in general, the rooted graph $(\Gamma, p)$ may be naturally identified with the Schreier graph of the stabilizer subgroup $H=\mathcal{G}(\Gamma)_p=\{g\in \mathcal{G}(\Gamma): g(p)=p\}$. This fact follows from Proposition~\ref{prop: inclusion implies homomorphism} and the equality $L(\Gamma, p)=L(\Sch(H,A), H)$. On the other hand, given a group $G$ generated by $A$ and a subgroup $H$, the Schreier graph of $H$, i.e. the graph which vertices are right coset of $H$ and direct edges are labeled by elements in $A$, is an example of a complete $A$-graph. In view of this, one can provide an algebraic characterization of transition groups of a graph.

\begin{prop} \label{prop:schreier}
A group $G$ is a transition group for a complete $A$-graph $\Gamma$ if and only $G$ admits a core-free subgroup $H$.  
\end{prop}
\begin{proof}
Let $p$ be any vertex of $\Gamma$ and set $H := G_p$, the stabilizer of $p$. Since for any $g \in G$ the conjugate  $G_p^g = G_{p \cdot g}$, the normal core of $H$ (the intersection of all conjugates of $H$) coincides with the intersection of all vertex stabilizers in $\Gamma$. By the definition of a transition group, this intersection is trivial. Conversely, consider the Schreier graph $\Sch(H,A)$ of $H$. As previously discussed, it is a complete $A$-digraph on which $G$ acts naturally. Let $w$ be a word in $\mathcal{L}(\Sch(H,A),H)$. By definition, $w$ represents an element of $G$ that stabilizes all vertices of the Schreier graph; that is, $Hg \cdot \overline{w} = Hg$ for all $g \in G$. Therefore, $\overline{w}$ belongs to the normal core of $H$, which is trivial, so $w \in WP(G,A)$. This shows that $\mathcal{L}(\Sch(H,A),H) \subseteq WP(G,A)$. The reverse inclusion is straightforward, hence $G$ is indeed the transition group of $\Sch(H,A)$.
\end{proof}

\begin{rem}
By the previous proposition we immediately deduce that the growth of the group $\mathcal{G}(\Gamma)$ is bounded below by the growth of the graph $\Gamma$.
\end{rem}

We actually aim to work in a slightly more general setting. Let $\{\Gamma^{(1)}, \ldots, \Gamma^{(k)}\}$ be a finite collection of complete $A$-graphs. As before, we can consider the permutations of vertices induced by the letters of $A$. This allows us to define the \emph{transition group} associated with the collection as $\mathcal{G}(\Gamma^{(1)} \sqcup \ldots \sqcup \Gamma^{(k)}) := F_A /_{\mathcal{L}(\Gamma^{(1)}, \ldots, \Gamma^{(k)})}$, where
\[
\mathcal{L}(\Gamma^{(1)}, \ldots, \Gamma^{(k)}) := \bigcap_{i=1}^k \mathcal{L}(\Gamma^{(i)})
\]
denotes the set of words acting trivially on all the graphs in the collection.
It is convenient to view a transition group over a finite union of graphs as a subgroup of the direct product of the corresponding transition groups over single graphs. More precisely, we have the following fact.

\begin{lemma} \label{lem:subgroup-product}
Let $\Gamma^{(1)}, \ldots, \Gamma^{(k)}$ be complete $A$-graphs. Then $\mathcal{G}(\Gamma^{(1)} \sqcup \ldots \sqcup \Gamma^{(k)}) \hookrightarrow \mathcal{G}(\Gamma^{(1)}) \times \ldots \times \mathcal{G}(\Gamma^{(k)})$, i.e., the transition group of the union is isomorphic to a subgroup of the direct product of the individual transition groups.
\end{lemma}
\begin{proof}
Let $G = \mathcal{G}(\Gamma^{(1)} \sqcup \ldots \sqcup \Gamma^{(k)})$. Define the map $\psi: G \to \mathcal{G}(\Gamma^{(1)}) \times \ldots \times \mathcal{G}(\Gamma^{(k)})$ by putting $g=[u] \mapsto (\pi_1(u), \ldots, \pi_k(u))$, where $\pi_i: A \to \mathcal{G}(\Gamma^{(i)})$ are the natural projections. By definition $WP(G;A) = \mathcal{L}(\Gamma^{(1)},\ldots,\Gamma^{(k)}) = \bigcap_{i=1}^k \mathcal{L}(\Gamma^{(i)})$, so $\psi$ is well-defined. Moreover, $\psi$ is injective: if $\pi_i(u) = \mathds{1}$ for all $i$, then $u \in \bigcap_i \mathcal{L}(\Gamma^{(i)}) = WP(G;A)$, hence $g = \mathds{1}$ in $G$.
\end{proof}

\subsection{Transition groups of context-free graphs}
We now show that the transition group of any context-free graph is {\bf CoCF}.
\\
Before proving the next lemma we need to recall the following definition. For a language $L\subseteq A^*$, the \emph{shift} of $L$, is the language $S_{A}(L)=\{xy\in A^*: yx\in L\}$ formed by all the words that may be obtained by a cyclic shift of some word of $L$. If it is clear from the context, we drop the subscript $A$ and we simply refer to $S(L)$. It is a well-known fact (see for instance Ex. 6.4 of \cite{hop}) that for a context-free language $L$, $S(L)$ is also context-free. For the next lemma, we need the following definition. Let $\pi = v \xrightarrow{u^{\omega}} \ldots$ be an infinite walk, for some $u \in A^*$. We say that a vertex $p$ is \emph{colored by a prefix $u'$ of $u$} if $u = u' u''$ and
\[
\pi = v \xrightarrow{u^s} v' \xrightarrow{u'} p \xrightarrow{u''} p' \xrightarrow{u^{\omega}} \ldots
\]
for some integer $s \ge 0$.
\begin{lemma} \label{lem:infinite-path}
Let $\mathcal{G}(\Gamma)$ be transition group of a complete context-free graph $\Gamma$ and let $g$ be a non trivial element of $\mathcal{G}(\Gamma)$. There exists an end-cone $\Gamma(v,x_0)$ s.t. the infinite walk $\pi=v \mapright{\mathring{g}^{\omega}}\ldots \subseteq \Gamma(v,x_0)$ with $\mathring{g} \in S(\{g\})$. Moreover, for any vertex $p\in \pi$ colored by a prefix $g'$ such that $g=g'g''$ we have $p\cdot (g''g')\neq p$.
\end{lemma}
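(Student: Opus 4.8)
The plan is to manufacture the confined infinite walk by iterating a \emph{cyclic rotation} of $g$ starting from a vertex at which the distance to $x_0$ attains its forward minimum; this forces the walk never to descend below its starting level, so it is trapped inside a single end-cone. The argument is essentially metric, with the context-free hypothesis serving only as the ambient setting that makes the end-cones $\Gamma(v,x_0)$ a well-behaved family.

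First I would fix a freely reduced word $w=c_1\cdots c_\ell$ representing $g$. Since $g\neq\mathds{1}$ in $\mathcal{G}(\Gamma)=F_A/\mathcal{L}(\Gamma)$ and $\mathcal{L}(\Gamma)=\bigcap_{p}L(\Gamma,p)$, the word $w$ fails to lie in $L(\Gamma,p)$ for some vertex $p$, i.e. $p\cdot g\neq p$. Because $\Gamma$ is complete, $w^{\omega}$ labels a genuine one-sided infinite walk
\[
q_0\mapright{c_1}q_1\mapright{c_2}\cdots\mapright{c_\ell}q_\ell\mapright{c_1}q_{\ell+1}\mapright{c_2}\cdots
\]
with $q_0=p$, so that $q_{n\ell}=p\cdot g^{n}$ and $q_{n\ell+i}=(p\cdot g^{n})\cdot(c_1\cdots c_i)$. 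Writing $h_j=\|q_j\|_{x_0}$, consecutive vertices are adjacent, so $|h_{j+1}-h_j|\le 1$ and every $h_j$ is a non-negative integer.

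The key step is then immediate: the set $\{h_j:j\ge 0\}$ of non-negative integers has a minimum $m$, attained at some index $k=n\ell+i$ with $0\le i<\ell$. I put $v=q_k$ and let $\mathring g=c_{i+1}\cdots c_\ell c_1\cdots c_i\in S(\{g\})$ be the rotation of $w$ read from the $k$-th edge onward; the forward walk from $v$ is labeled $\mathring g^{\omega}$ and runs precisely through $q_k,q_{k+1},\dots$. By the choice of $k$ every $h_j$ with $j\ge k$ satisfies $h_j\ge m=\|v\|$, so this walk never meets $D_{m-1}(x_0)$; being connected and starting at $v$ it lies in the component of $\Gamma\setminus D_{m-1}(x_0)$ containing $v$, which is by definition $\Gamma(v,x_0)$. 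This yields the containment $v\mapright{\mathring g^{\omega}}\cdots\subseteq\Gamma(v,x_0)$. For non-triviality, one period of $\mathring g$ sends $v=q_k$ to $q_{k+\ell}=(p\cdot g^{n+1})\cdot(c_1\cdots c_i)$, so $v\cdot\mathring g=v$ would force $p\cdot g^{n+1}=p\cdot g^{n}$ after cancelling the common suffix, i.e. $p\cdot g^{n}$ fixed by $g$; but $p\cdot g\neq p$ propagates along the orbit (if $p\cdot g^{n}$ were fixed, applying $g^{-n}$ would fix $p$), a contradiction. Hence $v\cdot\mathring g\neq v$.

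The hard part is not any single computation but the reconciliation in the last clause. Confinement forces the start to be the forward-minimal vertex $v$, which is typically an interior vertex of the spelling of $w$ rather than a point of the $g$-orbit, so what is verified naturally is $v\cdot\mathring g\neq v$ for the rotated element. This matches the stated $v\cdot g\neq v$ through the conjugacy $\mathring g=s^{-1}gs$ with $s=c_1\cdots c_i$: since $v\cdot s^{-1}=p\cdot g^{n}$, the relation $v\cdot\mathring g\neq v$ is exactly the statement that the orbit vertex underlying $v$ is moved by $g$. I would also note that the context-free hypothesis is not needed for the confinement itself; it only becomes relevant if one wants the end-cone to be a proper (infinite) one, in which case the finiteness of end-cone types can be used to locate moved vertices of arbitrarily large distance from $x_0$, keeping the minimal level $m$ positive.
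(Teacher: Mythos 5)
Your proposal is correct and follows essentially the same route as the paper's proof: start the $g^{\omega}$-walk at a vertex moved by $g$, pick a vertex of minimal distance from $x_0$ on that walk to get confinement in its end-cone, pass to the corresponding cyclic shift $\mathring{g}$, and rule out a circuit by determinism (pulling the coincidence back along the walk to contradict $p\cdot g\neq p$). Your closing remark correctly diagnoses that what the argument (and the paper's own proof) actually yields is $v\cdot\mathring{g}\neq v$ rather than the literal $v\cdot g\neq v$ of the statement, reconciled by the conjugation $\mathring{g}=s^{-1}gs$; this is exactly how the lemma is used later in Theorem~\ref{thm:inverse-digraph-is-CoCF}.
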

\begin{proof}
Since $g$ is nontrivial, there exists a vertex $q_1 \in V(\Gamma)$ and a walk $q_1 \xrightarrow{g} q_2$ with $q_1 \neq q_2$. Consider the infinite one-sided walk $\pi' = q_1 \xrightarrow{g^{\omega}} \ldots$, which exists since $\Gamma$ is complete. 
Choose a vertex $v$ in $\pi'$ belonging to the frontier vertices $\Delta(v, x_0)$ such that $\pi' \subseteq \Gamma(v, x_0)$ (this is possible since every vertex belongs to some frontier set, and $\pi' \subseteq \Gamma(x_0, x_0) = \Gamma$). Equivalently, $v$ is chosen among the vertices of $\pi'$ that are at minimal distance from $x_0$.
Using the vertex $v$, we may find a cyclic shift $\mathring{g}$ of $g$ such that $\pi = v \xrightarrow{\mathring{g}^{\omega}} \ldots \subseteq \Gamma(v, x_0)$. 
Now, for any vertex $p \in \pi$ colored by a prefix $g'$ of $g$, with $g = g' g''$, it is straightforward to see that, by the determinism of $\Gamma$, the equality $p \cdot g'' g' = p$ would imply $q_1 = q_2$, a contradiction. Hence, necessarily $p \cdot g'' g' \neq p$.
\end{proof}

For the next theorem we need to know that the union of finitely many context-free languages is context-free, and for a deterministic context-free language the complement is also deterministic context-free, see \cite{hop}. 
\begin{theorem}\label{thm:inverse-digraph-is-CoCF}
Let $\Gamma$ be a context-free $A$-graph that is also complete, then $\mathcal{L}(\Gamma)\in {\bf CoCF}$.
\end{theorem}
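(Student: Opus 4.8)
The plan is to show that the complement $A^*\setminus\mathcal{L}(\Gamma)$ is context-free, which is exactly what $\mathcal{L}(\Gamma)\in{\bf CoCF}$ asks. This complement is the set of words that move at least one vertex, $\{u\in A^*:\exists\,p\in V(\Gamma),\ p\cdot u\neq p\}$, and the real difficulty is that the quantifier ranges over the infinitely many vertices $p$. First I would reduce this infinite quantifier to a finite one using the two tools assembled just before the statement: the cyclic-shift operation $S$, which preserves context-freeness, and Lemma~\ref{lem:infinite-path}.

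Concretely, let $W\subseteq A^*$ be the set of words $v$ labeling a \emph{non-returning walk that stays inside a single end-cone}: there is an end-cone $\Gamma(p,x_0)$ with $p$ a frontier vertex such that the walk $p\mapright{v}p'$ lies in $\Gamma(p,x_0)$ and $p'\neq p$. I claim $A^*\setminus\mathcal{L}(\Gamma)=S(W)$. The inclusion $S(W)\subseteq A^*\setminus\mathcal{L}(\Gamma)$ is immediate: a word in $W$ moves the frontier vertex $p$, so it is nontrivial, and a cyclic shift of it is a conjugate, hence also nontrivial since $\mathcal{L}(\Gamma)$ is normal. For the reverse inclusion I would invoke Lemma~\ref{lem:infinite-path}: any nontrivial $u$ admits a cyclic shift $\mathring{u}$ and a frontier vertex $v$ for which the single step $v\mapright{\mathring{u}}z$ stays in $\Gamma(v,x_0)$ and $z\neq v$, so $\mathring{u}\in W$ and therefore $u\in S(W)$. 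Since $S$ preserves context-freeness, the whole problem reduces to proving that $W$ is context-free.

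To make $W$ tractable I would use the Muller--Schupp finiteness encoded in Proposition~\ref{prop: charact context-free}: up to end-isomorphism there are only finitely many end-cones $\Gamma_1,\dots,\Gamma_N$, and an end-isomorphism carries non-returning in-cone walks to non-returning in-cone walks, so $W=\bigcup_{j=1}^N W_j$, where $W_j$ collects the words labeling a walk from the root $r_j$ of $\Gamma_j$ that stays inside $\Gamma_j$ and does not return to $r_j$. A finite union of context-free languages is context-free, so it suffices to treat one cone type. Each $(\Gamma_j,r_j)$ is itself a context-free rooted inverse graph (its own end-cones are sub-cones of $\Gamma$), so by Proposition~\ref{prop: charact context-free} it is the configurations graph of an inverse PDA $M_j$, which is deterministic and \emph{partial}: reading $v$ from $r_j$ simulates the walk in $\Gamma_j$ and blocks precisely when $v$ would use one of the back-edges missing from the cone, that is, exactly when $v$ leaves $\Gamma_j$.

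The hard part is the final step, recognizing $W_j$ with a single pushdown automaton. The naive description $W_j=\mathrm{Dom}(M_j)\cap\overline{L(\Gamma_j,r_j)}$ presents $W_j$ as an intersection of two deterministic context-free languages, which need not be context-free; the obstruction is the familiar one that comparing the endpoint of the walk to its starting vertex seems to need a second stack. What saves the construction is that staying inside the cone means $M_j$ never pops below the base of its stack, so the root $r_j$ is the \emph{unique minimal configuration}: the endpoint equals $r_j$ exactly when the stack has been emptied to its initial symbol $\perp_j$ while in the initial state $q_{r_j}$, a single configuration that a deterministic PDA detects at the end of the input from its control state and exposed top symbol. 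I would therefore run $M_j$ and accept precisely when it consumes all of $v$ without blocking and halts in a configuration other than $(q_{r_j},\perp_j)$: blocking realizes ``stays in the cone'' and the modified acceptance condition realizes ``does not return'', the test being exact because any genuinely different endpoint---including the torsion case, where the walk comes back to the starting distance but at another vertex---differs from $r_j$ either in its state or in its exposed stack content. This exhibits $W_j$ as the language of one deterministic PDA, hence context-free, and unwinding the reductions gives $A^*\setminus\mathcal{L}(\Gamma)=S\!\left(\bigcup_{j}W_j\right)\in{\bf CF}$, i.e. $\mathcal{L}(\Gamma)\in{\bf CoCF}$.
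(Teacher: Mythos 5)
Your proposal is correct and, in its overall architecture, it is the same proof as the paper's: both describe $\mathcal{L}(\Gamma)^c$ as the set of cyclic shifts of words labeling non-returning walks that stay inside an end-cone, prove the hard inclusion with Lemma~\ref{lem:infinite-path} and the easy one via determinism (equivalently, normality of $\mathcal{L}(\Gamma)$), reduce to finitely many pieces via the Muller--Schupp cone types, and conclude by closure of context-free languages under cyclic shift and finite union.

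Where you genuinely diverge is the last step, and there your version is the more careful one. The paper defines $N(q)=\{u\in A^*: q\mapright{u}p \mbox{ is a walk in }\Gamma_i,\ p\neq q\}$ and finishes by asserting $N(q)=L(\Gamma_i,q)^c$, invoking closure of deterministic context-free languages under complement. That identification is inexact: $L(\Gamma_i,q)^c$ also contains every word whose walk from $q$ exits the cone, and such words may lie in $\mathcal{L}(\Gamma)$ (for instance $a^{-1}a$ read at a frontier vertex of a ray in the Cayley graph of $\mathbb{Z}$), so $N(q)$ is really the intersection $\mathrm{Dom}\cap L(\Gamma_i,q)^c$ that you point out, and its context-freeness needs an argument. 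Your repair---simulate the inverse PDA provided by Proposition~\ref{prop: charact context-free}, reject on blocking, and accept exactly when the run ends in a configuration other than $(q_0,\perp)$---is sound: the configurations graph is vertex-bijective with the cone, the PDA is real-time deterministic, and the end-of-input test is realizable by keeping the top stack symbol (with a unique bottom marker) in the finite control, which preserves determinism. Two small repairs to your write-up: the union must range over all pairs consisting of a cone type and a frontier vertex of it, not over one designated root per type, since an end-isomorphism preserves the frontier only setwise and a cone may have several frontier vertices (the union is still finite, each $\Delta_i$ being finite---this is exactly the paper's $\bigcup_{q\in\Delta_i}$); and the ``unique minimal configuration'' observation is not needed, because distinct configurations are by definition distinct vertices of the configurations graph, so the graph isomorphism already identifies ``endpoint $=r_j$'' with ``configuration $=(q_0,\perp)$''.
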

\begin{proof}
Let $\{\Gamma_1, \ldots, \Gamma_N\}$ be the end-cones types of $\Gamma$ and let $\Delta_i$ be the corresponding frontier vertices. For each $q\in \Delta_i$ we denote by 
\[
N(q)=\{u\in A^*: q\mapright{u}p\mbox{ is a walk in } \Gamma_i\mbox{ with }p\neq q\}
\]
the set of words that do not label a circuit at $q$ in $\Gamma_i$. We claim:
\[
\mathcal{L}(\Gamma)^c=\bigcup_{i=1}^N \left ( \bigcup_{q\in \Delta_i} S\left(N(q)\right)\right )
\]

Let us prove that the right hand side is contained in the left hand side. Note that every walk $q\mapright{u}p$ in $\Gamma_i$ that does not label a circuit at $q$, has the same property in $\Gamma$ in any subgraph that is end-isomorphic to $\Gamma_i$, i.e., $u\in \mathcal{L}(\Gamma)^c$. Moreover, take the word $yx$ such that $xy=u\in N(q)$, and factor a walk $q\mapright{u}p$ with $p\neq q$ in $\Gamma$ as $q\mapright{x}v\mapright{y}p$. If $yx$ labeled a circuit at $v$, then by the determinism of $\Gamma$ we would have $p=q$, a contradiction. Hence, $yx\in S(N(q))$ does not label a circuit at $v$ in $\Gamma$, i.e., $yx\in \mathcal{L}(\Gamma)^c$.\\
Let us now prove the other inclusion. So let $g\in \mathcal{L}(\Gamma)^c$, by Lemma~\ref{lem:infinite-path} there exists an end-cone $\Gamma(v,x_0)$ s.t. the infinite walk $v \mapright{\mathring{g}^{\omega}}\ldots \subseteq \Gamma(v,x_0)$ with $\mathring{g} \in S(\{g\})$ and $v \cdot \mathring{g} \neq v$.
Hence, $g\in S(N(w))$ where $w$ is the frontier vertex in the end-cone type $\Gamma_j$ that is end-isomorphic to the end-cone $\Gamma(v,x_0)$, and this concludes the proof of the other inclusion. To show that $\bigcup_{i=1}^N \left ( \bigcup_{q\in \Delta_i} S\left(N(q)\right)\right )$ is context-free, it is enough to show that each $N(q)$ is context-free, since context-free languages are closed under the cyclic-shift operator, and a finite union of context-free languages is context-free. Now, for any $q\in\Delta_i$, $N(q)=L(\Gamma_i, q)^c$ and since $\Gamma_i$ is, in particular, a context-free graph, we have that $L(\Gamma_i, q)$ is a deterministic context-free language by Proposition~\ref{prop: charact context-free}. Now, since deterministic context-free languages are closed under complementation, we conclude that $L(\Gamma_i, q)^c$ is also context-free, and this concludes the proof of the proposition. 
\end{proof}

\begin{rem}
In the previous proof we have shown that each language $N(q)$ is a deterministic context-free language. If $S(N(q))$ were deterministic for all $q\in\Delta_i$, then, by taking the complement, we would deduce that $\mathcal{L}(\Gamma)$ is an intersection of finitely many context-free languages, i.e., $\mathcal{L}(\Gamma)$ is poly-context-free. 
However, it seems that the cyclic closure of a context-free language is generally far from being deterministic, making the class of transition groups of context-free graphs quite different from that of poly-context-free groups. In fact, we are going to find an example of a transition group which is not poly-context-free (see Definition~\ref{def:comb}).

\end{rem}
As an immediate consequence of the previous proposition, we obtain the following theorem. 
\begin{theorem}\label{theo: co-cf transition group}
Let $\Gamma$ be a context-free $A$-graph that is also complete. Then, the transition group $\mathcal{G}(\Gamma)$ is {\bf CoCF}. 
\end{theorem}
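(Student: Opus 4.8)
The plan is to recognize the everywhere-circuit language $\mathcal{L}(\Gamma)$ as the word problem of $\mathcal{G}(\Gamma)$ and then simply quote Theorem~\ref{thm:inverse-digraph-is-CoCF}. Indeed, by construction $A$ is a finite, involutive-closed (hence symmetric) generating set of $\mathcal{G}(\Gamma)$, and $\mathcal{G}(\Gamma)=F_A/\mathcal{L}(\Gamma)$, so the whole content of the theorem should reduce to the identity of languages
\[
WP(\mathcal{G}(\Gamma);A)=\mathcal{L}(\Gamma)
\]
over $A^*$.

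First I would verify this identity. A word $w\in A^*$ represents $\mathds{1}_{\mathcal{G}(\Gamma)}$ exactly when its image $\oo{w}$ lies in the normal subgroup $\mathcal{L}(\Gamma)$ of $F_A$, so the only point needing a line of justification is that the everywhere-circuit language $\mathcal{L}(\Gamma)\subseteq A^*$ is stable under free reduction and expansion, i.e. $w\in\mathcal{L}(\Gamma)$ if and only if $\oo{w}\in\mathcal{L}(\Gamma)$. This follows from Lemma~\ref{lem: there are reduced walks} together with completeness of $\Gamma$: for any vertex $p$, completeness guarantees a walk $p\mapright{w}q$, whose terminal vertex coincides, by the cited lemma and determinism, with that of the walk $p\mapright{\oo{w}}q$; hence $w$ closes a circuit at $p$ precisely when $\oo{w}$ does. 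Quantifying over all $p\in V(\Gamma)$ gives $w\in\mathcal{L}(\Gamma)\iff \oo{w}\in\mathcal{L}(\Gamma)$, and combined with the previous observation this yields $WP(\mathcal{G}(\Gamma);A)=\mathcal{L}(\Gamma)$ as subsets of $A^*$.

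Second, the co-word problem of $\mathcal{G}(\Gamma)$ with respect to $A$ is then the complement $\mathcal{L}(\Gamma)^c$. By Theorem~\ref{thm:inverse-digraph-is-CoCF} this language is context-free, so $\mathcal{G}(\Gamma)$ has a context-free co-word problem with respect to the finite symmetric generating set $A$. Since having a context-free co-word problem is independent of the choice of finite symmetric generating set, I would conclude that $\mathcal{G}(\Gamma)\in{\bf CoCF}$.

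I do not expect a genuine obstacle here: the statement is essentially a reformulation of Theorem~\ref{thm:inverse-digraph-is-CoCF}. The only substantive step is the identification $WP(\mathcal{G}(\Gamma);A)=\mathcal{L}(\Gamma)$, and within it the sole technical point is the stability of $\mathcal{L}(\Gamma)$ under free reduction, which is a one-line consequence of Lemma~\ref{lem: there are reduced walks} and completeness.
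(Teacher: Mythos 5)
Your proposal is correct and takes essentially the same route as the paper: the paper derives this theorem as an immediate consequence of Theorem~\ref{thm:inverse-digraph-is-CoCF}, with the identification $WP(\mathcal{G}(\Gamma);A)=\mathcal{L}(\Gamma)$ (coming from $\mathcal{G}(\Gamma)=F_A/\mathcal{L}(\Gamma)$ and the closure of $\mathcal{L}(\Gamma)$ under free reduction) left implicit. Your write-up simply makes that routine identification explicit, which is exactly the content the paper treats as immediate.
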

In the theorem above, the connectedness of $\Gamma$ is not strictly necessary and since {\bf CoCF} languages are closed under finite intersections we have 
\begin{theorem}\label{theo: co-cf transition group of more graphs}
If $\Gamma^{(1)}, \ldots, \Gamma^{(k)}$ are context-free $A$-graphs that are complete, then, the transition group $\mathcal{G}(\Gamma^{(1)} \sqcup\ldots \sqcup \Gamma^{(k)})$ is {\bf CoCF}.
\end{theorem}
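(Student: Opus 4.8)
The plan is to reduce the statement directly to Theorem~\ref{thm:inverse-digraph-is-CoCF} together with the elementary closure properties of context-free languages, so that no new combinatorial analysis of the graphs is needed. The starting point is the defining relation recorded when the transition group over a finite union was introduced, namely that the word problem of the combined group is
\[
WP\big(\mathcal{G}(\Gamma^{(1)} \sqcup\ldots \sqcup \Gamma^{(k)}); A\big) = \mathcal{L}(\Gamma^{(1)}, \ldots, \Gamma^{(k)}) = \bigcap_{i=1}^k \mathcal{L}(\Gamma^{(i)}).
\]
Thus it suffices to show that this intersection lies in ${\bf CoCF}$, that is, that its complement is a context-free language.

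First I would apply Theorem~\ref{thm:inverse-digraph-is-CoCF} factorwise: since each $\Gamma^{(i)}$ is a context-free complete inverse $A$-digraph, we have $\mathcal{L}(\Gamma^{(i)})\in{\bf CoCF}$, so each complement $\mathcal{L}(\Gamma^{(i)})^c$ is context-free. Then, by De Morgan's law,
\[
\Big(\bigcap_{i=1}^k \mathcal{L}(\Gamma^{(i)})\Big)^c = \bigcup_{i=1}^k \mathcal{L}(\Gamma^{(i)})^c,
\]
which is a finite union of context-free languages and therefore context-free. Hence the co-word problem of $\mathcal{G}(\Gamma^{(1)} \sqcup\ldots \sqcup \Gamma^{(k)})$ is context-free, which is precisely the assertion that this group is ${\bf CoCF}$. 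Equivalently, one may phrase this as the observation that ${\bf CoCF}$ languages are closed under finite intersection, applied to the languages $\mathcal{L}(\Gamma^{(i)})$.

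I do not expect a genuine obstacle here; the result is a formal corollary of Theorem~\ref{thm:inverse-digraph-is-CoCF} and the closure of the context-free class under finite union. The only points deserving a moment's care are bookkeeping rather than substance: each $\mathcal{L}(\Gamma^{(i)})$ must be read as the everywhere-circuit language of the individual connected component $\Gamma^{(i)}$, so that connectedness (part of being an inverse graph) is genuinely used factorwise even though the disjoint union $\Gamma^{(1)} \sqcup\ldots \sqcup \Gamma^{(k)}$ is itself not connected; and the identification of the word problem with the intersection $\bigcap_{i=1}^k \mathcal{L}(\Gamma^{(i)})$ is exactly the definition of $\mathcal{L}(\Gamma^{(1)}, \ldots, \Gamma^{(k)})$, so the reduction to the single-graph case requires no extra argument.
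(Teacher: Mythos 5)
Your proposal is correct and follows essentially the same route as the paper: the paper's (one-line) justification is precisely that the word problem of $\mathcal{G}(\Gamma^{(1)} \sqcup\ldots \sqcup \Gamma^{(k)})$ is by definition $\bigcap_{i=1}^k \mathcal{L}(\Gamma^{(i)})$, each factor is {\bf CoCF} by Theorem~\ref{thm:inverse-digraph-is-CoCF}, and {\bf CoCF} is closed under finite intersection (i.e.\ De Morgan plus closure of context-free languages under finite union). Your bookkeeping remarks about applying the single-graph theorem componentwise match the paper's observation that connectedness of the ambient graph is not needed.
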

\begin{defn}[{\bf CF-TR} groups]
In view of Theorem~\ref{theo: co-cf transition group of more graphs}, we may define the subclass of {\bf CoCF} groups formed by all the transition groups of a finite disjoint collection of context-free $A$-graphs, denoted by {\bf CF-TR}.
\end{defn}

\begin{quest}\label{prob: general problems cf-trans-group}
There are several natural questions for such a class of groups.
\begin{enumerate}
\item What is the relationship with the class of poly-context-free groups? 
\item Is the class {\bf CF-TR} strictly included into {\bf CoCF}?
\end{enumerate}
\end{quest}
There is a natural characterization of such groups in terms of certain coverings. We say that a covering $\psi:\Lambda\to \Gamma$ is \emph{transition-preserving} if for any for any walk $q'\mapright{u}p'$ with $q'\neq p'$ in $\Lambda$ there is a walk $q\mapright{u}p$ in $\Lambda$ such that the corresponding walk $\psi(q)\mapright{u}\psi(p)$ in $\Gamma$ is not a circuit. We have the following fact.
\begin{prop}
If $\psi:\Lambda\to \Gamma$ is a transition-preserving covering between two complete $A$-graphs, then $\mathcal{G}(\Gamma)\simeq \mathcal{G}(\Lambda)$. In particular, a group $G$ is {\bf CF-TR} if and only if there is a transition-preserving covering of the Cayley graph $\Cay(G;A)$ onto a context-free $A$-graph. 
\end{prop}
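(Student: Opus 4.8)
The plan is to reduce the isomorphism $\mathcal{G}(\Gamma)\simeq\mathcal{G}(\Lambda)$ to the equality of the two normal subgroups $\mathcal{L}(\Gamma)=\mathcal{L}(\Lambda)$ of $F_A$, since by definition $\mathcal{G}(\Gamma)=F_A/\mathcal{L}(\Gamma)$ and $\mathcal{G}(\Lambda)=F_A/\mathcal{L}(\Lambda)$ and both graphs live over the same alphabet $A$. I would first prove the inclusion $\mathcal{L}(\Lambda)\subseteq\mathcal{L}(\Gamma)$, which holds for any covering and does not use the transition-preserving hypothesis. Indeed, let $u\in\mathcal{L}(\Lambda)$ and pick an arbitrary $p\in V(\Gamma)$; since $\psi$ is a covering it is surjective on vertices, so $p=\psi(p')$ for some $p'\in V(\Lambda)$. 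As $u\in\mathcal{L}(\Lambda)$ there is a circuit $p'\mapright{u}p'$ in $\Lambda$, and applying the morphism $\psi$ yields the circuit $p\mapright{u}p$ in $\Gamma$. Since $p$ was arbitrary, $u\in\mathcal{L}(\Gamma)$.

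For the reverse inclusion $\mathcal{L}(\Gamma)\subseteq\mathcal{L}(\Lambda)$ I would argue by contraposition and use exactly the transition-preserving property. If $u\in\mathcal{L}(\Lambda)^c$, then there is a walk $q'\mapright{u}p'$ in $\Lambda$ with $q'\neq p'$; by the definition of a transition-preserving covering there is a walk $q\mapright{u}p$ in $\Lambda$ whose image $\psi(q)\mapright{u}\psi(p)$ is not a circuit in $\Gamma$, i.e.\ $u$ labels a non-circuit walk in $\Gamma$ and hence $u\in\mathcal{L}(\Gamma)^c$. Combining the two inclusions gives $\mathcal{L}(\Gamma)=\mathcal{L}(\Lambda)$, so the identity of $F_A$ descends to an isomorphism $\mathcal{G}(\Gamma)\simeq\mathcal{G}(\Lambda)$.

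For the ``in particular'' statement, I would first record that the (right) Cayley graph is a complete inverse $A$-digraph whose everywhere-circuits are exactly the words acting trivially on every vertex, i.e.\ $\mathcal{L}(\Cay(G;A))=WP(G;A)$ and therefore $\mathcal{G}(\Cay(G;A))\cong G$. The direction $(\Leftarrow)$ is then immediate: a transition-preserving covering $\Cay(G;A)\to\Gamma$ onto a context-free graph gives $G\cong\mathcal{G}(\Cay(G;A))\simeq\mathcal{G}(\Gamma)$ by the first part, and since $\Gamma$ is connected and complete (being a covering image of the connected complete graph $\Cay(G;A)$) and context-free, $\mathcal{G}(\Gamma)$ is {\bf CF-TR}. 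For $(\Rightarrow)$, assuming $G=\mathcal{G}(\Gamma)$ for a single connected context-free graph $\Gamma$ rooted at $x_0$, one has $\mathcal{L}(\Gamma)=WP(G;A)\subseteq L(\Gamma,x_0)$; hence $L(\Cay(G;A),\mathds{1}_G)=WP(G;A)\subseteq L(\Gamma,x_0)$, and Proposition~\ref{prop: inclusion implies homomorphism} produces a morphism $\psi:\Cay(G;A)\to\Gamma$ with $\psi(\mathds{1}_G)=x_0$, which is a covering because both graphs are complete inverse graphs.

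The most substantial computation is verifying that this $\psi$ is transition-preserving: given a non-circuit walk $q'\mapright{u}p'$ in $\Cay(G;A)$, the word $u$ represents a non-identity element of $G$, so $u\notin WP(G;A)=\mathcal{L}(\Gamma)$ and there is a non-circuit walk $w\mapright{u}w'$ in $\Gamma$; writing $w=\psi(q)$ by surjectivity, lifting $u$ to a walk $q\mapright{u}p$ by completeness of $\Cay(G;A)$, and invoking determinism of $\Gamma$ to force $\psi(p)=w'\neq w$ yields the required non-circuit image. I expect the genuine obstacle, however, to be conceptual rather than computational: the definition of {\bf CF-TR} permits a finite disjoint union $\Gamma^{(1)}\sqcup\cdots\sqcup\Gamma^{(k)}$, while a covering out of the connected graph $\Cay(G;A)$ is forced to have connected image, so realizing $G$ as the transition group of a single connected context-free graph (or else reinterpreting the target as such a disjoint union with a correspondingly disconnected source) is the point that needs care; the free-product construction of Section~\ref{sec:free-product} appears to be the right tool for the former.
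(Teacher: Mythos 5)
Your proof is correct and takes essentially the same route as the paper: the paper establishes the isomorphism by showing the natural map $\mathcal{G}(\Lambda)\to\mathcal{G}(\Gamma)$ is well defined (the covering gives $\mathcal{L}(\Lambda)\subseteq\mathcal{L}(\Gamma)$) and injective (the transition-preserving property gives $\mathcal{L}(\Gamma)\subseteq\mathcal{L}(\Lambda)$), which is exactly your equality of normal subgroups, and for the characterization it writes down the same covering $\psi(v)=x_0\cdot g$ that you obtain via Proposition~\ref{prop: inclusion implies homomorphism}, merely asserting the transition-preserving property that you verify in detail. Regarding your final concern: it does not point to a defect of your argument relative to the paper, since the paper's own proof of the converse also treats only $G=\mathcal{G}(\Gamma)$ for a single connected context-free graph; the mismatch with the disjoint-union definition of {\bf CF-TR} (a covering out of the connected graph $\Cay(G;A)$ cannot be onto a disconnected target) is an imprecision in the paper's statement itself, which your last paragraph correctly identifies.
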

\begin{proof}
Consider the map $\phi: \mathcal{G}(\Lambda)\to \mathcal{G}(\Gamma)$ defined as follows: for any $g\in \mathcal{G}(\Lambda)$ and $u\in A^*$ which represents $g$, we define $\phi(g)=h$ where $h\in \mathcal{G}(\Gamma)$ is the element defined by $u$. It is a well-defined map since if $v$ represents $g$ too, then $uv^{-1}\in \mathcal{L}(\Lambda)$ labels a circuit everywhere in $\Lambda$, and so also in $\Gamma$ since $\psi$ is a covering. Hence, $v$ represents $h$ as well. It is routine to check that it is a morphism of groups which is also surjective. Let us prove that $\phi$ is injective. Let $u\in \mathcal{L}(\Gamma)$, by the path lifting property (Proposition~\ref{prop: lifting of coverings}), any circuit $v\mapright{u}v$ in $\Gamma$ lifts to a walk $v'\mapright{u}v''$ with $\psi(v')=\psi(v'')=v$. Now, if $u\notin \mathcal{L}(\Lambda)$, by the transition-preserving property we would have that $u\notin \mathcal{L}(\Gamma)$, a contradiction.\

Let us prove the characterization. As observed in Example~\ref{thm:example-Z-finite}, it is straightforward to check that the transition group of the Cayley graph $\Cay(G;A)$ is isomorphic to $G$. Thus, if there is a covering $\psi:\Cay(G;A)\to \Gamma$ onto a context-free graph $\Gamma$, then by the first part of this proposition $\mathcal{G}(\Gamma)\simeq G$. Conversely, if $G=\mathcal{G}(\Gamma)$ and we fix a root $x_0$ in $\Gamma$, then the morphism $\psi: \Cay(G;A)\to \Gamma$ defined on the vertices by $\psi(v)=y$ whenever $1\mapright{g} v$ is a walk in $\Cay(G;A)$ and $y=x_0\cdot g$ and extended to a morphism of $A$-graphs, is a covering that is also transition-preserving. 
\end{proof}

\subsection{An example}\label{sec: example}
As a concrete example of a transition group of a context-free graph we consider the $A$-graph $\Omega$ on the alphabet $A=\{a,a^{-1},b,b^{-1},c,c^{-1}\}$ (see Fig. \ref{fig: omega}). The  set of vertices $V(\Omega)=\{p_i, q_i, i\in \mathbb{Z}\}$ and the set $E(\Omega)$ of edges is given by:
\[
p_i\mapright{a}p_{i+1},\; p_{i+1}\mapright{a^{-1}}p_i,\,q_i\mapright{b}q_{i+1},\; q_{i+1}\mapright{b^{-1}}q_i, \quad \forall i\in\mathbb{Z}
\]
\[
p_i\mapright{b}p_{i},\; p_{i}\mapright{b^{-1}}p_i,\,q_i\mapright{a}q_{i},\; q_{i}\mapright{a^{-1}}q_i, p_i\mapright{x}q_i, q_i\mapright{x}p_i, x\in\{c,c^{-1}\} \quad \forall i\in\mathbb{Z}
\]
We sometimes identify the generic vertex $p_n$ with the pair $(n,0)$, $n\in \mathbb{Z}$, while $q_m$ with the pair $(m,1)$, $m\in\mathbb{Z}$.

\begin{figure}[ht]
\includegraphics[scale=1.5]{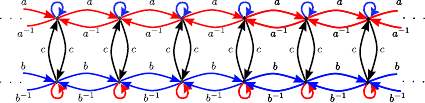}
\caption{The graph $\Omega$. The loops in red are labeled by $a,a^{-1}$, in blue by $b,b^{-1}$.} \label{fig: omega}
\end{figure}

Note that $\Omega$ is clearly quasi-transitive and it is quasi-isometric to a tree, thus by \cite[Theorem 2]{RodContextfree} it is a context-free graph. The next proposition characterizes the transition group defined by $\Omega$.
\begin{prop}\label{prop: the omega group}
The transition group of $\Omega$ has the following properties:
\begin{itemize}
\item the subgroup $\la a,b\ra$ generated by $a,b$ of the transition group $\mathcal{G}(\Omega)$ is isomorphic to $\mathbb{Z}^2$;
\item $\mathcal{G}(\Omega)$ is isomorphic to $G=\la a,b,c\mid c^2=1, ab=ba, ca=bc \ra $, so it is a semi-direct product $\mathbb{Z}^{2} \rtimes C_2$;
\item the action on $V(\Omega)$ is given by the formula $(x,y)\cdot a^nb^mc^j=(x+ym+(1-y)n,\, y+j\mod 2)$ where $a^nb^mc^j$ with $m,n\in\mathbb{Z}$, $j\in\{0,1\}$ is the standard normal form of the semi-direct product.
\end{itemize}
\end{prop}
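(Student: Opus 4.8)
The plan is to analyze the transition group $\mathcal{G}(\Omega)$ by first understanding the permutations that the generators $a, b, c$ induce on the vertex set $V(\Omega) = \{(n,0), (m,1) : n, m \in \mathbb{Z}\}$, then identifying the abstract group structure. Reading the edges directly from the definition: the letter $a$ shifts the $p$-row by $(n,0) \mapsto (n+1, 0)$ while fixing every $q$-vertex (since $q_i \mapright{a} q_i$ is a loop); symmetrically, $b$ shifts the $q$-row by $(m,1) \mapsto (m+1,1)$ while fixing every $p$-vertex; and $c$ swaps the two rows via $(n,0) \leftrightarrow (n,1)$. From these three vertex-permutations I would write down the action formula and verify it directly by induction on the exponents, which is the routine computational core.

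\medskip

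\noindent\textbf{First item ($\langle a, b\rangle \cong \mathbb{Z}^2$).} Since $a$ acts nontrivially only on $p$-vertices and $b$ acts nontrivially only on $q$-vertices, the two permutations commute as elements of $\Aut(\Omega)$ acting on $V(\Omega)$; I would confirm $ab = ba$ in $\mathcal{G}(\Omega)$ by checking that $a b a^{-1} b^{-1}$ labels a circuit at every vertex (it does, because the supports of $a$ and $b$ are disjoint). To see that $\langle a, b\rangle$ is free abelian of rank $2$ rather than a proper quotient, I would exhibit a single vertex whose orbit detects both coordinates: the vertex $p_0 = (0,0)$ has $p_0 \cdot a^n b^m = (n,0)$ since $b$ fixes $p_0$ and $a^n$ sends it to $(n,0)$, so distinct pairs $(n,m)$ are not yet separated; hence I instead test on two vertices, using $p_0 \cdot a^n = (n,0)$ and $q_0 \cdot b^m = (m,1)$, so that $a^n b^m$ acts trivially on all of $V(\Omega)$ exactly when $n = m = 0$. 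This proves injectivity of $(n,m) \mapsto a^n b^m$ and gives $\langle a,b\rangle \cong \mathbb{Z}^2$.

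\medskip

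\noindent\textbf{Second item (presentation and semidirect product).} I would first verify that the three defining relations $c^2 = 1$, $ab = ba$, $ca = bc$ actually hold in $\mathcal{G}(\Omega)$ by checking each relator labels an everywhere-circuit: $c^2$ is clear since swapping rows twice is the identity; $ab=ba$ was handled above; and $cac^{-1} = b$ follows because conjugating the $p$-shift by the row-swap produces precisely the $q$-shift (and $c = c^{-1}$). This yields a surjection $G = \langle a,b,c \mid c^2, [a,b], ca b^{-1} c^{-1}\rangle \twoheadrightarrow \mathcal{G}(\Omega)$. The reverse direction — showing this surjection is injective, so that no further relations are hidden — is the main obstacle, since a priori $\mathcal{G}(\Omega)$ could be a proper quotient. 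I would settle it by computing the order of $G$'s normal form and matching it with the faithful action: the relations put every element of $G$ into the normal form $a^n b^m c^j$ with $n,m \in \mathbb{Z}$, $j \in \{0,1\}$, so $|G| \le |\mathbb{Z}^2 \rtimes C_2|$, and then I would show the action of such a normal form on $V(\Omega)$ is faithful. Concretely, using the action formula proved in the third item, $a^n b^m c^j$ fixes all vertices iff $j = 0$ (forced by the row-swap when $j=1$, which moves $(0,0)$ to $(0,1)$) and then iff $n = m = 0$ (by the rank-$2$ argument of the first item); hence the composite $G \to \mathcal{G}(\Omega) \hookrightarrow \mathrm{Sym}(V(\Omega))$ is injective, so $\mathcal{G}(\Omega) \cong G$. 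Recognizing $G$ as $\mathbb{Z}^2 \rtimes C_2$ is then immediate: the relation $cac^{-1} = b$ (equivalently the conjugation swap on the two $\mathbb{Z}$-factors) describes the action of $C_2 = \langle c\rangle$ on $\mathbb{Z}^2 = \langle a, b\rangle$ by coordinate exchange.

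\medskip

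\noindent\textbf{Third item (action formula).} Finally I would prove $(x,y)\cdot a^n b^m c^j = (x + ym + (1-y)n,\, y + j \bmod 2)$ by a direct induction, checking it on the generators and composing. On a $p$-vertex ($y=0$) the formula predicts $(x,0)\cdot a^n b^m = (x+n, 0)$, matching that $a$ shifts and $b$ fixes; on a $q$-vertex ($y=1$) it predicts $(x,1)\cdot a^n b^m = (x+m, 1)$, matching that $b$ shifts and $a$ fixes; and right-multiplication by $c^j$ toggles the second coordinate, consistent with the row-swap. The only subtlety is bookkeeping the right-action convention so that the exponents act in the correct order, which the normal-form relation $ca = bc$ is designed to reconcile; this is the routine verification I would carry out to close the proof.
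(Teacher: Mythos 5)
Your proposal is correct and follows essentially the same route as the paper's proof: verify the relations $c^2=1$, $ab=ba$, $ca=bc$ via the vertex action, reduce to the normal form $a^nb^mc^j$ to obtain an epimorphism from the presented group $G$ onto $\mathcal{G}(\Omega)$, and then use the explicit action formula to show this epimorphism is injective (the paper separates distinct normal forms by their action, you check triviality of the kernel --- equivalent arguments). The only cosmetic difference is your phrasing ``$|G|\le |\mathbb{Z}^2\rtimes C_2|$'', which should be read as surjectivity of the normal-form map rather than a cardinality comparison, but the intended reasoning is the same as the paper's.
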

\begin{proof}
Note that for any vertex $v$ in $\Omega$ we have $v\cdot ab=v\cdot ba$, hence $a,b$ commute in $\mathcal{G}(\Omega)$ ($ab=ba$ in $\mathcal{G}(\Omega)$). Similarly, one can check that for any vertex $v$ we have $v\cdot c^2=v$ and $v\cdot (ca)=v\cdot (bc)$. Hence, in $\mathcal{G}(\Omega)$ we also have the equalities $c^2=1, ca=bc$ and also $ac=cb$ as a consequence of the involution $c^2=1$. Using these equalities, it is easy to see that any element $g\in\mathcal{G}(\Omega)$ is equivalent to a word of the form $g=a^nb^m c^j$ with $m,n\in\mathbb{Z}$, $j\in\{0,1\}$. Now, every element $g\in G$ has a normal form $g=a^nb^m c^j$ with $m,n\in\mathbb{Z}$, $j\in\{0,1\}$. Thus, there is a well defined epimomorphism $\phi: G\to \mathcal{G}(\Omega)$ sending $a^nb^mc^j$ into the corresponding element in $\mathcal{G}(\Omega)$. The formula of the action can be obtained by observing that for a generic vertex $(x,y)\in V(\Omega)$ we have
\[
(x,y)\cdot a^n=(x+(1-y)n, y),\, (x,y)\cdot b^m=(x+ym, y),\, (x,y)\cdot c^j=(x,\,y+j\mod 2),\, 
\]
Let us show that $\phi$ is injective by using this action. Consider two elements $a^nb^m c^j, a^{n'}b^{m'}c^{j'}$, then for any vertex $(x,y)\in V(\Omega)$ the equation $(x,y)\cdot a^nb^mc^j= (x,y)\cdot a^{n'}b^{m'}c^{j'}$ implies $j=j'$ and if we choose a vertex with $y=0$ then we conclude that $n=n'$, while if we choose $y=1$ we get $m=m'$. 
\end{proof}

\subsection{Closure properties}
In this section we show that this class is closed under taking finitely generated subgroups, direct products, and finite extensions.
We fix $H$ a subgroup of $G$ that is finitely generated by $B \subseteq A^{*}$ and we set $n:= \max_{b\in B} \ell(b)$ with $\ell(b)$ the length in terms of $A$. 
\begin{lemma} \label{lem: infinite-path-B}
Let $g \in H$ be a nontrivial element. Then, there exist an end-cone $\Gamma(v,x_0)$ and a vertex $\widetilde{v} \in \Gamma(v,x_0)$ with $d(v,\widetilde{v}) \leq n$ such that the infinite walk $\widetilde{v} \xrightarrow{\mathring{g}^{\omega}} \ldots \subseteq \Gamma(v,x_0)$ for some $\mathring{g} \in S_{B}(\{g\})$ and $\widetilde{v} \cdot \mathring{g} \neq \widetilde{v}$.
\end{lemma}
\begin{proof}
By Lemma~\ref{lem:infinite-path}, there exists an end-cone $\Gamma(v, x_0)$ and a cyclic shift $\widehat{g} \in S(\{g\})$ such that the infinite walk $\pi = x \xrightarrow{\widehat{g}^{\omega}} \ldots$ is contained in $\Gamma(v,x_0)$. By the definition of $n$, possibly removing a prefix of $\pi$ of length at most $n$, we may choose a vertex $\widetilde{v} \in \pi$ such that $d(v,\widetilde{v}) \leq n$, colored by a prefix $g'$ of $g$, with $g = g'g''$, and set $\mathring{g} = g''g' \in S_{B}(\{g\})$. By construction, $\mathring{g}$ satisfies the required properties, namely $\widetilde{v} \xrightarrow{\mathring{g}^{\omega}} \ldots \subseteq \Gamma(v,x_0)$ and $\widetilde{v} \cdot \mathring{g} \neq \widetilde{v}$.
\end{proof}

\begin{prop}\label{prop: f.g. subgroups}
The class of {\bf CF-TR} is closed by taking finitely generated subgroups.
\end{prop}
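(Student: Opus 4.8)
The plan is to realize $H$ explicitly as the transition group of finitely many context-free inverse complete $B$-graphs, using the orbit graphs of the $H$-action on the vertices. Write $\sigma\colon B^*\to A^*$ for the monoid homomorphism sending each generator to the $A$-word it represents (with $\sigma(b^{-1})=\sigma(b)^{-1}$), so that $H$ acts on $V:=\bigsqcup_{i} V(\Gamma^{(i)})$ through $\sigma$. For each $H$-orbit $O\subseteq V$ let $\Delta_O$ be the associated Schreier (orbit) graph: vertex set $O$, and for $b\in B$ an edge $v\mapright{b}v\cdot\sigma(b)$. Since each $b$ acts as a permutation and $B$ is involutive, $\Delta_O$ is a complete, deterministic, involutive, connected $B$-graph, i.e.\ inverse. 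By construction $\mathcal{L}(\Delta_O)=\{w\in B^*: \sigma(w)\text{ fixes every vertex of }O\}$, so intersecting over \emph{all} orbits gives $\bigcap_{O}\mathcal{L}(\Delta_O)=\{w: \sigma(w)\in \bigcap_i\mathcal{L}(\Gamma^{(i)})\}=WP(H;B)$. It therefore suffices to (i) show each $\Delta_O$ is context-free and (ii) select finitely many orbits whose graphs already cut out $WP(H;B)$.

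Step (i) would be immediate from closure of the context-free languages under inverse homomorphism. Fixing a root $v_0\in O$ lying in $\Gamma^{(i)}$, a word $w\in B^*$ labels a circuit at $v_0$ in $\Delta_O$ if and only if $v_0\cdot\sigma(w)=v_0$ in $\Gamma^{(i)}$, that is $L(\Delta_O,v_0)=\sigma^{-1}\!\big(L(\Gamma^{(i)},v_0)\big)$. Since $\Gamma^{(i)}$ is context-free, $L(\Gamma^{(i)},v_0)$ is a context-free language by Proposition~\ref{prop: charact context-free}, and its preimage under $\sigma$ is again context-free; hence $(\Delta_O,v_0)$ is a context-free graph.

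The hard part is step (ii): a priori the $H$-action has infinitely many orbits, and no single finite subfamily need be faithful. Here I would use Lemma~\ref{lem: infinite-path-B} to \emph{localize} the detection of non-trivial elements inside end-cones. Each $\Gamma^{(i)}$ is context-free, so it has finitely many end-cone types $\Gamma_1,\dots,\Gamma_N$; as the ambient graphs are locally finite (degree $\le|A|$) and connected, every sphere—and hence every frontier set $\Delta_j$—is finite, so the test set $T_j:=\{w\in V(\Gamma_j): \text{there is a walk of length}\le n\text{ from }\Delta_j\text{ to }w\}$ is finite. For a non-trivial $g\in H$, applying Lemma~\ref{lem: infinite-path-B} inside a factor $\Gamma^{(i)}$ on which $g$ acts non-trivially produces an end-cone $\Gamma(v,x_0)$, end-isomorphic to some $\Gamma_j$, and a vertex $\widetilde v$ reached from the frontier vertex $v$ by a length-$\le n$ walk along the ray $\widetilde v\mapright{\mathring g^{\omega}}\cdots$ lying in the cone, with $\widetilde v\cdot\mathring g\ne\widetilde v$ for some $\mathring g\in S_B(\{g\})$. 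Transporting through the end-isomorphism, the image $\widetilde w\in T_j$ satisfies $\widetilde w\cdot\mathring g\ne\widetilde w$ with the relevant walk staying inside $\Gamma_j$; crucially this is now a statement about the type $\Gamma_j$ alone.

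Therefore I would fix, once and for all, a single occurrence $C_j\cong\Gamma_j$ inside some $\Gamma^{(i_j)}$, with end-isomorphism $\psi_j$, and take the finite vertex set $\widetilde V:=\bigcup_{j}\psi_j^{-1}(T_j)$. Let $O_1,\dots,O_m$ be the finitely many $H$-orbits meeting $\widetilde V$. Given $g\ne 1$, the witness above transports into $C_j\subseteq\Gamma^{(i_j)}$, so $\mathring g$ moves $\psi_j^{-1}(\widetilde w)\in\widetilde V$; since $\mathring g$ is a cyclic conjugate of $g$, the element $g$ itself moves a vertex in the same orbit, hence in $\bigsqcup_l O_l$. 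Thus $H$ acts faithfully on $\bigsqcup_l O_l$, which yields $\bigcap_{l=1}^m\mathcal{L}(\Delta_{O_l})=WP(H;B)$. Combining this with step (i), $H\cong\mathcal{G}(\Delta_{O_1}\sqcup\cdots\sqcup\Delta_{O_m})$ is the transition group of finitely many context-free inverse complete $B$-graphs, i.e.\ $H\in\textbf{CF-TR}$. The only delicate point is the finiteness just described; once non-triviality is seen to be detected at the level of the finitely many end-cone types, the finite faithful family falls out.
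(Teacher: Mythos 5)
Your proposal is correct and takes essentially the same route as the paper's own proof: your Schreier orbit graphs $\Delta_O$ are exactly the paper's graphs $(\Gamma^{B},v)$, your test sets $T_j$ are the paper's thickened frontier sets $\Delta_i^{n}$, and your faithfulness argument (Lemma~\ref{lem: infinite-path-B}, transport through an end-isomorphism to a fixed occurrence of each end-cone type, then the cyclic-shift trick to pass from $\mathring{g}$ back to $g$) is the same as the paper's. The one small point where you go beyond the paper is the explicit justification that each orbit graph is context-free, via $L(\Delta_O,v_0)=\sigma^{-1}\bigl(L(\Gamma^{(i)},v_0)\bigr)$ and closure of context-free languages under inverse homomorphism, a step the paper dismisses as immediate.
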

\begin{proof}
Suppose first that we are dealing with one complete context-free graph, so that $H$ is a finitely generated subgroup of $\mathcal{G}(\Gamma)$, where $\Gamma$ is a graph that is also context-free. Let $B \subseteq A^{*}$ be the involutive closed generating set and let $\Delta_{1}, \ldots, \Delta_{N}$ be the frontier vertices of the end-cone types $\{\Gamma_1,\ldots,\Gamma_N\}$. We set $\Delta_{i}^{n}:=\{v \in \Gamma_i \mid d(x,v) \leq n \mbox{ for some } x \in \Delta_{i} \}$. With a slight abuse of notation we identify the frontier vertices of $\Delta_i$ with the frontier vertices of an end cone $\Gamma(v,x_0)$ of $\Gamma$ that is end-isomorphic to $\Gamma_i$. We do the same for the set $\Delta_{i}^{n}$. 
By definition of transition group, we need to show that $H$ is the transition group of a finite union of complete context-free $B$-graphs $\Theta_B$. We claim $$\Theta_B=\bigcup_{i=1}^{N} \bigcup_{v \in \Delta_{i}^{n}}(\Gamma^{B},v)$$ where $(\Gamma^{B},v)$ is given by the vertices $y$ of $\Gamma$ such that there exists $w \in B^{*}$ and $v\mapright{w}y$. The edges are $y_1\mapright{w}y_2$ whenever $w \in B$ and there exists a walk labeled by $w$ from $y_1$ to $y_2$ in $\Gamma$. It is not difficult to see that since $B$ is involutive closed and $\Gamma$ is inverse, then also $(\Gamma^{B},v)$ is inverse. Moreover, it is complete and context-free.\

Let us show that the complement of the word problem of $H$ and $\mathcal{G}(\Theta_B)$ coincide, as this will of course imply that their word problems coincide.
Let $ g \in G(\Theta_B)$ be a non trivial element, then there exists $\widetilde{v} \in (\Gamma^{B},v)$ for some $v \in \Delta_{i}^{n}$ such that $\widetilde{v}\cdot g \neq \widetilde{v}$. It follows that $\widetilde{v}\mapright{g}v'$ with $\widetilde{v} \neq v'$ is also a walk in $\Gamma$ and so $g\neq \mathds{1}_H$ in $H$. \

Conversely, suppose that $\mathds{1}_H \neq g \in H$, by Lemma~\ref{lem: infinite-path-B} there exists an end-cone $\Gamma(x,x_0)$ and a vertex $v \in \Gamma(x,x_0)$ with $d(x,v) \leq n$ such that $v\mapright{\mathring{g}^{\omega}} \ldots \subseteq \Gamma(x,x_0)$ and $v\cdot \mathring{g}\neq v$ for some $\mathring{g} \in S_{B}(\{g\})$. Now $\Gamma(x,x_0)$ is end-isomorphic to some $\Gamma_i$, hence we can find $\widetilde{v} \in \Delta_i^{n}$ such that $\widetilde{v}\mapright{\mathring{g}^{\omega}}\ldots  \subseteq \Gamma_i$ and $\widetilde{v}\cdot \mathring{g}\neq \widetilde{v}$. A fortiori, $\widetilde{v}\mapright{\mathring{g}^{\omega}}\ldots  \subseteq (\Gamma,\widetilde{v})$, whence $\widetilde{v}\mapright{\mathring{g}^{\omega}}\ldots \subset (\Gamma^{B}, \widetilde{v})$. Now in $(\Gamma^{B}, \widetilde{v})$, we can read $g$ on a walk $v' \mapright{u} \widetilde{v} \mapright{s} v''$ such that $\widetilde{v} \mapright{s} v''$ is a subwalk of $\widetilde{v}\mapright{\mathring{g}^{\omega}}\ldots$ and where $v' \neq v''$, otherwise determinism would imply $\widetilde{v}\cdot \mathring{g}=\widetilde{v}$. In particular, $g$ is non trivial in $G(\Theta_B)$.\

The previous argument may easily be adapted when considering a finite family of disjoint complete context-free graphs.
\end{proof}
Note that the previous proposition gives a constructive way to compute context-free graphs defining a finitely generated subgroup of a transition group of a context-free graphs $\Gamma$.
\begin{prop}\label{prop:closure_products}
The class of {\bf CF-TR} is closed by taking direct products.
\end{prop}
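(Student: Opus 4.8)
The plan is to prove closure under binary direct products; finite products then follow by induction. So let $G=\mathcal{G}(\Gamma^{(1)}\sqcup\cdots\sqcup\Gamma^{(k)})$ and $H=\mathcal{G}(\Lambda^{(1)}\sqcup\cdots\sqcup\Lambda^{(m)})$ be two \textbf{CF-TR} groups, where the $\Gamma^{(i)}$ are complete context-free inverse $A$-digraphs and the $\Lambda^{(j)}$ are complete context-free inverse $B$-digraphs. After renaming generators we may assume that $A\cup B$ is involutive closed with $A\cap B=\emptyset$, so that $A$-letters and $B$-letters are genuinely distinct. The idea is to promote each factor's graphs to $(A\cup B)$-digraphs in such a way that the ``foreign'' letters act as the identity, and then take the disjoint union of all of them.

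Concretely, I would define $\widehat{\Gamma}^{(i)}$ to be $\Gamma^{(i)}$ with, for every vertex $v$ and every $b\in B$, a loop $v\mapright{b}v$ adjoined (together with its formal inverse loop $v\mapright{b^{-1}}v$). Symmetrically, $\widehat{\Lambda}^{(j)}$ is $\Lambda^{(j)}$ with a loop at every vertex for each letter of $A$. Each new graph is readily checked to be a complete inverse $(A\cup B)$-digraph: determinism is untouched because the adjoined loops carry the previously-unused labels, completeness now holds for all of $A\cup B$, and the loops come in mutually inverse pairs with distinct labels. The key point to verify is that $\widehat{\Gamma}^{(i)}$ is still context-free in the sense of Muller--Schupp: adjoining loops does not change the distance between distinct vertices, so the balls $D_n(x_0)$, the frontier sets $\Delta(v,x_0)$ and hence the end-cones have exactly the same underlying vertex partition as before, each merely decorated with the same packet of loops at every vertex. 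An end-isomorphism between two end-cones of $\Gamma^{(i)}$ therefore lifts verbatim to one between the corresponding end-cones of $\widehat{\Gamma}^{(i)}$, so the number of end-cone types is preserved and $\widehat{\Gamma}^{(i)}$ is context-free; likewise for $\widehat{\Lambda}^{(j)}$.

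It then remains to identify the transition group $K=\mathcal{G}(\widehat{\Gamma}^{(1)}\sqcup\cdots\sqcup\widehat{\Gamma}^{(k)}\sqcup\widehat{\Lambda}^{(1)}\sqcup\cdots\sqcup\widehat{\Lambda}^{(m)})$ with $G\times H$. Let $\pi_A\colon(A\cup B)^*\to A^*$ and $\pi_B\colon(A\cup B)^*\to B^*$ be the homomorphisms deleting the $B$-letters and the $A$-letters, respectively; both commute with the involution, hence descend to the free groups. Since every $b\in B$ is a loop in $\widehat{\Gamma}^{(i)}$, reading a word $w$ from a vertex $v$ lands at the same place as reading $\pi_A(w)$; hence $w\in\mathcal{L}(\widehat{\Gamma}^{(i)})$ iff $\pi_A(w)\in\mathcal{L}(\Gamma^{(i)})$, and dually $w\in\mathcal{L}(\widehat{\Lambda}^{(j)})$ iff $\pi_B(w)\in\mathcal{L}(\Lambda^{(j)})$. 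Intersecting over all graphs, $w$ represents $\mathds{1}_K$ iff $\pi_A(w)$ represents $\mathds{1}_G$ and $\pi_B(w)$ represents $\mathds{1}_H$. This shows that $w\mapsto(\pi_A(w),\pi_B(w))$ descends to a well-defined injective homomorphism $K\to G\times H$; surjectivity is immediate, since for $(g,h)$ with $g$ represented by $u\in A^*$ and $h$ by $v\in B^*$ the word $uv$ satisfies $\pi_A(uv)=u$ and $\pi_B(uv)=v$. Hence $K\cong G\times H$, and $K$ is \textbf{CF-TR} by construction.

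I expect the only real subtlety to be the context-freeness claim of the second paragraph --- i.e.\ confirming that decorating every vertex with a fixed packet of loops neither creates new end-cone types nor obstructs existing end-isomorphisms --- together with the routine but necessary check that the projection bookkeeping in the last paragraph respects the everywhere-circuit condition defining $\mathcal{L}$; the remainder is formal.
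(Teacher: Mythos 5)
Your proof is correct and takes essentially the same route as the paper: make the alphabets disjoint, adjoin at every vertex of each factor's graphs a loop for each letter of the other factor's alphabet, and take the disjoint union of the decorated graphs, whose transition group is then identified with the direct product. The only difference is one of detail, not of method: the paper declares the remaining verifications ``straightforward,'' while you carry them out explicitly (preservation of the complete inverse structure and of context-freeness via end-cones, plus the projection maps $\pi_A,\pi_B$ giving the isomorphism with $G\times H$) and you also treat factors presented as finite unions of graphs rather than single graphs.
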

\begin{proof}
Let $\mathcal{G}(\Gamma_i)$ be a transition group of a context-free $A_i$-graph with $i=1,2$. We can assume that $A_1 \cap A_2$ is empty, otherwise we change accordingly the labels in $A_2$. Now we consider $\widehat{\Gamma}_1 \sqcup \widehat{\Gamma}_2$ where $\widehat{\Gamma}_i$ is a $A_1 \cup A_2$-graph in which for every vertex there are $|A_j|$ loops each labeled by an element in $A_j$ with $j \neq i$. It is straightforward that $\mathcal{G}(\widehat{\Gamma}_1 \sqcup \widehat{\Gamma}_2)$ is the desired direct product.
\end{proof}

The technique to prove the last closure property is inspired by \cite{holt} and, in particular, it is based on the \textit{inflated graph} construction introduced in \cite{RodContextfree} combined to the strategy in the proof of Proposition~\ref{prop: f.g. subgroups}.
\begin{defn}
Let $H$ be a finitely generated group which is a finite index subgroup of $G$ and let $T$ be a right transversal 
for $G$ with respect to $H$ containing the identity. For each $y \in A \cup T-\{\mathds{1}_{G}\}$ and $t \in T$, we fix a word $h_{t,y} \in A^{*}$ such that $ty=_{G} h_{t,y}\widetilde{t}$ 
for some $\widetilde{t} \in T$. We define the \emph{Schreier automaton} $\mathcal{A}$ over $A \cup T$ to be the finite state automaton with set of states $T$ , initial state $\mathds{1}_{G} \in T$ and $t\mapright{y \mid h_{t,y}}\widetilde{t}$ when $ty=_{G} h_{t,y}\widetilde{t}$.
\end{defn}
Starting from a Schreier automaton, we can build one that is inverse, i.e. 
\[
t \mapright{y \mid h_{t,y}} \widetilde{t} \ \Longleftrightarrow \widetilde{t} \ \mapright{y^{-1} \mid h^{-1}_{t,y}} t,
\]
without changing the groups involved $H$ and $G$. The way we do so is by enlarging the alphabet for the labels from $A \cup T$ to $A \cup \{T, T^{-1}\}$ (remember $A$ is already inverse by assumption) and by adding the inverse transitions $\widetilde{t} \ \mapright{y^{-1} \mid h^{-1}_{t,y}} t$. The fact that we are still dealing with $H$ and $G$ follows from the equation $\widetilde{t}y^{-1}=_{G} h^{-1}_{t,y}t$, which is true if and only if $ty=_{G} h_{t,y}\widetilde{t}$. We call it the \emph{inverse Schreier automaton} and with a slight abuse of notation we denote it again by $\mathcal{A}$.
\begin{defn}
Let $\Gamma$ be a complete $A$-graph and let $H=\mathcal{G}(\Gamma)$ be the associated transition group. Consider $G$ to be a group with $H$ as a finite index subgroup and $\mathcal{A}$ the corresponding Schreier automaton over $A \cup T$. The \emph{inflated graph} $\mathcal{A} \ltimes (\Gamma,x)$ with $x \in V(\Gamma)$ is the rooted $A \cup T$-graph defined as follows:
the set of vertices is $T \times V(\Gamma)$, a generic edge is $(t,p) \mapright{y} (t',p')$ with $t \mapright{y \mid u} t' \in \mathcal{A}$ and $p \mapright{u} p'$ a walk in $\Gamma$ and the root is $(\mathds{1}_{G},x)$. 
\end{defn}
Intuitively, the inflated graph simulates a cascade composition of the Schreier automaton $\mathcal{A}$ with the graph $\Gamma$: each transition of $\mathcal{A}$ labelled by $y \mid u$ triggers the traversal of the word $u$ inside $\Gamma$. 
This construction is not genuinely new, it can be viewed as a composition of sequential machines (transducers) in disguise, where the graph $\Gamma$ itself is interpreted as a transducer. It is straightforward to see that $\mathcal{A} \ltimes \Gamma$ is inverse and if $\Gamma$ is context-free, so is $ \mathcal{A} \ltimes \Gamma$ (see the proof of \cite[Lemma 11]{RodContextfree}).
\begin{prop}\label{prop:closure_overgroups}
The class of {\bf CF-TR} is closed by taking finite index overgroups.
\end{prop}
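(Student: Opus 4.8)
The plan is to realize $G$ as the transition group of a finite disjoint union of inflated graphs. Suppose first that $H=\mathcal{G}(\Gamma)$ for a single complete context-free inverse $A$-digraph $\Gamma$; the general case $H=\mathcal{G}(\Gamma^{(1)}\sqcup\cdots\sqcup\Gamma^{(k)})$ will be handled by running the construction on each $\Gamma^{(i)}$ in parallel and taking the disjoint union, exactly as at the end of the proof of Proposition~\ref{prop: f.g. subgroups}. I would fix a right transversal $T\ni\mathds{1}_G$ of $H$ in $G$, form the inverse Schreier automaton $\mathcal{A}$ over $A\cup\{T,T^{-1}\}$, and set $\Theta=\mathcal{A}\ltimes(\Gamma,x)$ for an arbitrary root $x\in V(\Gamma)$. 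By the remark preceding the statement, $\Theta$ is a complete context-free inverse $(A\cup\{T,T^{-1}\})$-digraph, so it only remains to identify $\mathcal{G}(\Theta)$ with $G$.

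The heart of the argument is a right action of $G$ on $T\times V(\Gamma)$ defined by $(t,p)\cdot g=(\widetilde{t},\,p\cdot h)$, where $tg=_G h\widetilde{t}$ is the unique factorization with $h\in H$ and $\widetilde{t}\in T$. A short computation based on $t(gg')=(h\widetilde{t})g'=h(\widetilde{t}g')$ shows this is indeed an action, and unwinding the definition of the inflated graph shows that the permutation of $V(\Theta)=T\times V(\Gamma)$ induced by a label $y\in A\cup\{T,T^{-1}\}$ is precisely the action of $y$ viewed as an element of $G$. Since $H=\la A\ra$ and $T$ is a transversal we have $G=\la A\cup T\ra$, so the transition group $\mathcal{G}(\Theta)$ is exactly the image of $G$ in $\mathrm{Sym}(T\times V(\Gamma))$. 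I would also record here that $\Theta$ is connected: the $G$-orbit of the root $(\mathds{1}_G,x)$ is all of $T\times V(\Gamma)$, because for a target $(t,p)$ the transitivity of $H=\mathcal{G}(\Gamma)$ on $V(\Gamma)$ provides $h\in H$ with $x\cdot h=p$, and then $g=ht$ sends $(\mathds{1}_G,x)$ to $(t,p)$.

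It then remains to prove that this action is faithful, equivalently that $\mathcal{L}(\Theta)=WP(G;A\cup T)$; as in Proposition~\ref{prop: f.g. subgroups} it is cleanest to show the complements of the two word problems coincide, i.e.\ that a nontrivial $g\in G$ moves some vertex of $\Theta$. Suppose instead $g$ fixes every vertex. Evaluating at $t=\mathds{1}_G$ gives $(\mathds{1}_G,p)\cdot g=(\widetilde{t},p\cdot h)=(\mathds{1}_G,p)$ with $g=h\widetilde{t}$, forcing $\widetilde{t}=\mathds{1}_G$ (so $g=h\in H$) and $p\cdot h=p$ for every $p\in V(\Gamma)$. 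Thus $h$ lies in the kernel of the action of $H=\mathcal{G}(\Gamma)$ on $V(\Gamma)$, which is trivial by the very definition of a transition group, whence $g=\mathds{1}_G$. In the disjoint-union case the identical computation, now invoking the faithfulness of $\mathcal{G}(\Gamma^{(1)}\sqcup\cdots\sqcup\Gamma^{(k)})$ on $\bigsqcup_i V(\Gamma^{(i)})$, again forces $g=\mathds{1}_G$; this is precisely the point at which several graphs are genuinely needed, since a single component need not carry a faithful $H$-action.

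The main obstacle I expect is not any single calculation but getting the bookkeeping of this semidirect-type action right: verifying that the label-permutations of the inflated graph agree with the combined coset-and-$\Gamma$ action, that $\Theta$ is connected so that it is a bona fide inverse graph, and that faithfulness must be read off from the full disjoint union rather than from one component. The completeness and context-freeness of $\Theta$ are inherited from $\Gamma$ and the Schreier automaton (cf.\ the proof of \cite[Lemma 11]{RodContextfree}), so no new work on the formal-language side should be required.
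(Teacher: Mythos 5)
Your proof is correct, and although it starts from the same device as the paper---the inverse Schreier automaton $\mathcal{A}$ and the inflated graph $\mathcal{A}\ltimes(\Gamma,x)$---the way you identify the transition group with $G$ is genuinely different. The paper never establishes that the inflated graph is connected; to guarantee that non-triviality of an element is witnessed by a vertex reachable from a root, it takes the finite union $\Theta_{\mathcal{A}}=\bigcup_{i=1}^{N}\bigcup_{v\in\Delta_{i}}\bigl(\mathcal{A}\ltimes(\Gamma,v),(v,\mathds{1}_G)\bigr)$ over the frontier vertices of the end-cone types and runs a case analysis (coset moved versus coset fixed), invoking Lemma~\ref{lem:infinite-path} to produce, after a cyclic shift, a witness vertex lying in an end-cone near one of the chosen roots---exactly the strategy of Proposition~\ref{prop: f.g. subgroups}. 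You bypass all of this machinery with the observation that a single inflated graph is already connected: the $G$-orbit of $(\mathds{1}_G,x)$ is all of $T\times V(\Gamma)$ because $H=\mathcal{G}(\Gamma)$ acts transitively on $V(\Gamma)$ (any two vertices of the connected, complete graph $\Gamma$ are joined by a walk), and by completeness the action of every word over $A\cup T\cup T^{-1}$ is realized by a walk, so orbit and reachable set coincide. Once connectedness is in hand, one inflated graph per component suffices and faithfulness is the short computation at $t=\mathds{1}_G$ that you give. Your route buys three things: it removes the end-cone/frontier-vertex apparatus and Lemma~\ref{lem:infinite-path} from this proof; it treats explicitly the case where $H$ is the transition group of a disjoint union $\Gamma^{(1)}\sqcup\cdots\sqcup\Gamma^{(k)}$, which the paper's proof silently omits (it assumes $H=\mathcal{G}(\Gamma)$ for a single graph); and it isolates the one place where several graphs are genuinely needed, namely that faithfulness of the $H$-action can only be read off the whole disjoint union. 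The paper's heavier construction would remain valid even if the inflated graph were disconnected, but since connectedness does hold, your simplification is legitimate; the only detail worth writing out in full is the label-by-label agreement (including letters of $T^{-1}$, via the inverse transitions of $\mathcal{A}$) between the edge-permutations of $\mathcal{A}\ltimes(\Gamma,x)$ and the induced action $(t,p)\cdot g=(\widetilde{t},\,p\cdot h)$ with $tg=_G h\widetilde{t}$, which is the bookkeeping you flag and is indeed routine.
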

\begin{proof}
Let $G$ be a finitely generated group and $H=\mathcal{G}(\Gamma)$ a finite index subgroup of $G$. Let $T$ be a transversal of $G$ with respect to $H$ containing the identity, so that $H=\langle A\rangle$ and $G= \langle A \cup T \rangle$, and let $\mathcal{A}$ be the corresponding Schreier automaton.\\
Let $\{ \Delta_{1}, \ldots, \Delta_{N}\}$ be the frontier vertices, and arguing as we did in Proposition~\ref{prop: f.g. subgroups} with a slight abuse of notation, we identify the frontier vertices of $\Delta_i$ with the frontier vertices of an end cone $\Gamma(v,x_0)$ of $\Gamma$ that is end-isomorphic to $\Gamma_i$. We claim that $G$ is the transition group of 
\[
\Theta_\mathcal{A}:=\bigcup_{i=1}^{N} \bigcup_{v \in \Delta_{i}}(\mathcal{A} \ltimes (\Gamma,v),(v,\mathds{1}_G)).
\]
All we need to prove is that $g \neq 1$ in $G$ if and only if there exists $i \in \{1, \ldots, N\}$, $v \in \Delta_i$ and $q \in \mathcal{A} \ltimes (\Gamma,v)$ s.t. $q \cdot g \neq q$. Indeed the latter is exactly the requirement for an element to be non trivial in $\mathcal{G}(\Theta_{\mathcal{A}})$. \\

First of all, we notice that $g$ is non trivial in $G \setminus T$ if and only if at least one of the following occurs
\begin{enumerate}
    \item there exists $t \in T$ s.t. $tg=_{G} ht'$ for some $t' \neq t$ and $h \in \{h_{t,y}\}_{t \in T,y \in A \cup T-\{\mathds{1}_G\}}^{+}$;
    \item for all $t \in T$ there exists $h \in H$ s.t. $g=_{G} ht$ and $h$ is non trivial in $H$.
\end{enumerate}

Suppose there exists $i$ and $v \in \Delta_i$ as in the hypothesis, then $(t,p)$ with $t \in T$ and $p \in V(\Gamma)$ is such that $(t,p) \cdot g=(t',p')$ and in $\mathcal{A}$ we have $t \mapright{g \mid h} t'$. If $t \neq t'$, then in $\mathcal{A}$ we have  $t \mapright{g \mid h} t' \neq t$ hence we have $(t,p) \cdot g= (t', p \cdot h) \neq (t,p)$ in $\mathcal{A} \rtimes (\Gamma,v)$ for some $i$ and $v \in \Delta_i$. Otherwise by definition we have $p \mapright{h} p \cdot h \neq p$ in $\Gamma$ and $h$ is non trivial in $H$. So, by Lemma~\ref{lem:infinite-path}, there exists $\widetilde{p} \mapright{\mathring{h}^{\omega}}\ldots \subseteq \Gamma(\widetilde{p},x_0)$ with $\mathring{h} \in S_A(h)$.
Therefore there exists $p' \in V(\Gamma,v)$ for some $i$ and $v \in \Delta_i$ such that $p' \cdot h \neq p'$ and $p' \mapright{h} p' \cdot h \subseteq \Gamma(v,x_0)$. 
Finally, in $\mathcal{A} \rtimes (\Gamma,v)$ we have $(\mathds{1}_G,v) \cdot u= (t,p')$ where $u$ is such that there exists $\mathds{1}_G \mapright{u \mid z} t$ in $\mathcal{A}$ with $v \mapright{z} p' \subseteq (\Gamma,v)$ and $(t,p') \cdot g =(t,p' \cdot h) \neq (t,p')$ since by hyphotesis  $tg=_G ht$ and $p' \cdot h \neq p'$.
\end{proof}

\begin{rem} \label{rmk:direct-product-CF}
This means that any finitely generated subgroup of a virtual direct product of free groups is {\bf CF-TR}. In particular, finitely generated virtually abelian groups are in {\bf CF-TR}.
\end{rem}

\section{Detecting torsion} \label{sec:torsion}
 We give some structural results focusing on the existence of torsion elements and on bounds on the order of such elements. Henceforth, we consider a context-free $A$-graph $\Gamma$ with $N$ end-cone types $\Gamma_1,\ldots, \Gamma_N$ with the corresponding frontier vertices $\Delta_1, \ldots, \Delta_N$ with $\Delta_i\subseteq \Gamma_i$. For any end-cone type $\Gamma_i$ that is end-isomorphic to some end-cone $\Gamma(v,x_0)$ with $\|v\|_{x_0}=n$ there are $k_i \le 2|A|$ second level end-cones $\Gamma(q_1, x_0), \ldots, \Gamma(q_{k_i}, x_0)$ contained in $\Gamma(v,x_0)$ with $\|q_j\|_{x_0}=n+1$. 
Roughly speaking we are taking the end-cones of $\Gamma_i$ whose frontier vertices are at distance one from $\Delta_i$. Put $\gamma=\max\{k_i, i\in [1,N]\}$ and $\delta=\max\{|\Delta_i|, i\in [1,N]\}$.\

We say that two end-cones $\Gamma(p,x_0), \Gamma(q,x_0)$ are \emph{nested} if $\Gamma(p,x_0)\subseteq \Gamma(q,x_0)$ and there is an end-isomorphism $\psi:\Gamma(q,x_0)\to \Gamma(p,x_0)$ with $\psi(q)=p$. We have the following lemma.
\begin{lemma}\label{lem: geodesic infinite order}
Let $\Gamma(p,x_0), \Gamma(q,x_0)$ be two distinct end-isomorphic nested end-cones of $\Gamma$. Then, for any walk $q\mapright{u} p\subseteq \Gamma(q,x_0)$ there is an infinite walk $q\mapright{u^{\omega}} \ldots\subseteq  \Gamma(q,x_0)$ that is not a circuit. 
\end{lemma}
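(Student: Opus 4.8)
The plan is to produce the infinite walk by iterating the self-similarity end-isomorphism $\psi\colon\Gamma(q,x_0)\to\Gamma(p,x_0)$ (with $\psi(q)=p$) on the given finite segment $q\mapright{u}p$ and stacking the resulting $u$-labelled pieces end to end. Before that, I would extract from the hypotheses the one fact I really need, namely $p\neq q$. Since the two cones are distinct while self-similarity gives $\Gamma(p,x_0)\subseteq\Gamma(q,x_0)$, they differ as subgraphs. The inclusion already forces $\|p\|_{x_0}\ge\|q\|_{x_0}$, and equality is impossible, for then the two cones would be the same connected component of $\Gamma\setminus D_{n-1}(x_0)$ (with $n=\|q\|_{x_0}$) and hence equal; thus $\|p\|_{x_0}>\|q\|_{x_0}$. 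In particular $p\neq q$, and moreover $q\notin\Gamma(p,x_0)$ since every vertex of $\Gamma(p,x_0)$ has distance at least $\|p\|_{x_0}>\|q\|_{x_0}$ from $x_0$.

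For the iteration I would set $\Gamma_0=\Gamma(q,x_0)$ and $\Gamma_{n+1}=\psi(\Gamma_n)$, so that injectivity of $\psi$ together with $\Gamma(p,x_0)\subsetneq\Gamma_0$ yields a strictly decreasing chain $\Gamma_0\supsetneq\Gamma_1\supsetneq\cdots$ of subgraphs of $\Gamma(q,x_0)$. Writing $q_n=\psi^n(q)$ (so $q_0=q$ and $q_1=p$), I would apply the graph morphism $\psi$ repeatedly to the walk $q_0\mapright{u}q_1$: since morphisms carry walks to equally labelled walks, $\psi^n$ sends it to a walk $q_n\mapright{u}q_{n+1}\subseteq\Gamma_n\subseteq\Gamma(q,x_0)$. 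The terminal vertex $q_{n+1}$ of the $n$-th piece is the initial vertex of the $(n+1)$-st, so the pieces concatenate into a single infinite one-sided walk reading $u^\omega$ and lying entirely inside $\Gamma(q,x_0)$; determinism of the inverse graph $\Gamma$ then identifies it as the unique walk $q\mapright{u^\omega}$.

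The clause ``not a circuit'' amounts to $q\cdot u=q_1=p\neq q$, which is precisely the fact secured in the first step, so it costs nothing further. I expect no deep obstacle: the only point needing care is the bookkeeping of the nested images $\Gamma_n$, to be certain each successive $u$-piece really stays inside $\Gamma(q,x_0)$ instead of leaking out of the cone. Should a stronger conclusion be wanted, the same nesting delivers it: from $q\in\Gamma_0\setminus\Gamma_1$ and injectivity of $\psi$ one gets $q_n\in\Gamma_n\setminus\Gamma_{n+1}$, whence the $q_n$ are pairwise distinct and the walk never revisits a vertex.
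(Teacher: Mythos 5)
Your proof is correct, and while your construction of the infinite walk coincides with the paper's --- apply the powers $\psi^{k}$ of the self-similarity isomorphism to the walk $q\mapright{u}p$ and concatenate the $u$-labelled images, which stay in $\Gamma(q,x_0)$ by nesting --- your certification that the walk is not a circuit is genuinely different. The paper argues metrically: it fixes a geodesic $q\mapright{g}\psi(q)$, shows by a frontier-crossing argument that the concatenation $q\mapright{g^{k}}\psi^{k}(q)$ is again a geodesic, and concludes $d(q,\psi^{k}(q))=k|g|>0$, so the endpoints escape to infinity. You argue set-theoretically: the strictly decreasing chain $\Gamma_{n}=\psi^{n}(\Gamma(q,x_0))$ together with injectivity of $\psi$ gives $q_{n}\in\Gamma_{n}\setminus\Gamma_{n+1}$, hence the endpoints $q_{n}=q\cdot u^{n}$ are pairwise distinct; and you make explicit the preliminary fact $\|p\|_{x_0}>\|q\|_{x_0}$ (hence $q\notin\Gamma(p,x_0)$), which the paper leaves implicit. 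Your route is more elementary and slightly more general: it uses neither geodesics nor the frontier-preserving property of end-isomorphisms, only an injective morphism onto a proper sub-cone avoiding $q$. What the paper's computation buys is the stronger quantitative statement that the walk leaves every ball at linear speed, an estimate in the same spirit as the noose argument of Lemma~\ref{lem: bound order periodic circuits}. Two small corrections to your write-up: first, the ``stronger conclusion'' you present as optional is in fact the one required, since Proposition~\ref{prop: order properties} deduces $u^{i}\cdot q\neq u^{j}\cdot q$ for all $i\neq j$ from this lemma, so pairwise distinctness of the $q_{n}$, not merely $q\cdot u\neq q$, is the real content of ``not a circuit''; second, your closing claim that the walk ``never revisits a vertex'' is an overstatement --- intermediate vertices of distinct $u$-segments may coincide, and already the first segment repeats $q$ if $u$ is not reduced --- but this costs nothing, as only the distinctness of the endpoints $q_{n}$ is needed.
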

\begin{proof}
Indeed, let $\psi:\Gamma(q,x_0)\to \Gamma(p,x_0)$ be the end-isomorphism with $\psi(q)=p$. Consider a geodesic $q\mapright{g}\psi(q)\subseteq \Gamma(q,x_0)$ connecting $q$ with $\psi(q)=p$. We claim that also $q\mapright{g}\psi(q)\mapright{g}\psi^2(q)$
is a geodesic. Any geodesic $q\mapright{u}\psi^2(q)\subseteq \Gamma(q,x_0)$ has to cross a vertex $v\in \Delta(\psi(q), x_0)$, thus the previous geodesic factors as $q\mapright{u_1}v\mapright{u_2}\psi^2(q)$ with $q\mapright{u_1}v\subseteq \Gamma(q,x_0)$ and $v\mapright{u_2}\psi^2(q)\subseteq \Gamma(\psi(q),x_0)$. Since both $q\mapright{g}\psi(q)$ and $\psi(q)\mapright{g}\psi^2(q)$ are geodesics, respectively in $\Gamma(q,x_0)$ and $\Gamma(\psi(q),x_0)$ we deduce $|u_1|=|g|$ and $|u_2|=|g|$. Hence, $q\mapright{g}\psi(q)\mapright{g}\psi^2(q)$ is also a geodesic. The same argument can be extended to show that $q\mapright{g^k}\psi^{k}(q)\subseteq \Gamma(q,x_0)$ is a geodesic for any $k\ge 1$. Now, by applying $\psi^k$ to the walk $q\mapright{u}\psi(q)\subseteq \Gamma(q,x_0)$, we deduce that $q\mapright{u^{\omega}}\subseteq \Gamma(q,x_0)$ is an infinite walk that is not a circuit.
\end{proof}
A subgraph $\Lambda$ of $\Gamma$ is called \emph{$g$-periodic} if $\Lambda$ is equal to the subgraph induced by the circuit $\pi=y\mapright{g^k}y$ for some integer $k\ge 1$. The \emph{order} $o(g,\Lambda)$ is the smallest integer $m$ such that $\Lambda$ is equal to the subgraph induced by the vertices and edges of the circuit $\pi=y\mapright{g^m}y$.
A \emph{clip} is a walk $y_1 \mapright{u} y_2\subseteq \Gamma(y_1, x_0)$ with $y_1, y_2 \in \Delta(y_1, x_0)$.
For a clip $p$, we denote by $(\chi(p), \subseteq)$ the poset of all subwalks of $p$ which are clips ordered by inclusion.
\begin{lemma}\label{lem: chain}
    For a clip $p=y_1 \mapright{u} y_2$, let $h(\chi(p))$ be the length of a maximal chain in $\chi(p)$. Then,
    $h(\chi(p))\ge \max\{d(y_1, x): x\in p\}+1$.
\end{lemma}
\begin{proof}
Let $k = \max\{ d(y_1, x) : x \in p \}$. We proceed by induction on $k$. The case $k = 0$ is trivial. Assume that $k > 0$. Let $q_1$ and $q_2$ denote, respectively, the first and the last vertices of $p$ such that $d(y_1, q_1) = d(y_1, q_2) = k - 1$.  Such vertices exist because, by the tree-like structure of $\Gamma$, the walk $p$ must pass through the set of frontier vertices at distance $k - 1$ both when entering and when exiting $\Gamma(q_1, x_0)$ which is a second level end-cone of $\Gamma(y_1, x_0)$.
Then the subwalk $p' = q_1 \mapright{u'} q_2$ of $p$ is a clip with $\max\{ d(q_1, x) : x \in p' \} = k - 1$. 
By the induction hypothesis, we have $h(\chi(p')) \ge k$. 
Hence, since $p' \subsetneq p$, it follows that $h(\chi(p)) \ge h(\chi(p')) + 1 \ge k + 1$.
\end{proof}
\begin{lemma}\label{lem: order of cyclic graph}
    For any $g$-periodic subgraph $\pi$ of a context-free graph $\Gamma$ we have $o(g,\pi)< \left(\frac{|g|-1}{|g|}\delta\right)\gamma^{\delta^2 N|g|}$.
\end{lemma}
\begin{proof}
Let $\pi$ be the subgraph induced by the walk $p = y \mapright{g^{\ell}} y$, where $\ell = o(g, \pi)$. 
Note that, without loss of generality, by cyclically shifting $g$ if necessary, we may assume that $p \subseteq \Gamma(y, x_0)$, so that $p$ is a clip.
For any clip $\lambda \in \chi(p)$ with $\lambda = y_1 \mapright{u} y_2$ and $y_1, y_2 \in \Delta(y_1, x_0)$, 
we say that $\lambda$ is \emph{colored} by $(x_1, x_2, \Gamma_i, g')$ if the following conditions hold:
\begin{itemize}
    \item $\lambda \subseteq \Gamma(y_1, x_0)$ and there exists an end-isomorphism 
    $\psi : \Gamma(y_1, x_0) \to \Gamma_i$ such that $x_1=\psi(y_1), x_2=\psi(y_2) \in \Delta_i$;
    \item $g'$ is a prefix of $g$, and there exists an integer $s \ge 0$ such that 
    $y \mapright{g^s g'} y_1$.
\end{itemize}
Let us prove first the following claim:
\begin{equation}\label{eq: claim}
\mbox{if } \lambda_1, \lambda_2\in \chi(p), \lambda_1\subsetneq\lambda_2, \mbox{then }\lambda_1, \lambda_2\mbox{ have different colors.}
\end{equation}
Suppose by contradiction that claim (\ref{eq: claim}) does not hold, and $\lambda_1, \lambda_2$ have the same color $(x_1. x_2, \Gamma_i, g')$. Therefore, $\lambda_1$ and $\lambda_2$ can be factorized as follows:
\[
\lambda_2 = y_1 \mapright{g''} q_1 \mapright{g^m} q_2 \mapright{g'} y_1' 
\mapright{g''u_2} y_2' \mapright{u_3} y_2, 
\qquad 
\lambda_1 = y_1' \mapright{g''u_2} y_2',
\]
for some $m \ge 0$, where $g'g'' = g$.  
Since $\lambda_1$ and $\lambda_2$ have the same color, there exists an end-isomorphism $\psi : \Gamma(y_1, x_0) \to \Gamma(y_1', x_0)$ between two nested end-cones $\Gamma(y_1', x_0) \subseteq \Gamma(y_1, x_0)$.  
Hence, by Lemma~\ref{lem: geodesic infinite order}, there exists an infinite walk $y_1 \vvlongmapright{(g'' g^m g')^{\omega}} \ldots = y_1 \vvlongmapright{(g'' g')^{\omega}} \ldots$ that is not a circuit, and this implies that $\pi$ is infinite,  a contradiction.
\\
By the determinism and the minimality of $\ell$ each vertex $q\in V(\pi)$ occurs at most $|g|-1$ times in the vertices appearing in the description of the walk $p = y \mapright{g^{\ell}} y$. Hence, the number of vertices appearing in $p$ is at most $(|g| - 1)\,|V(\pi)|$. 
We claim that $|V(\pi)| \le \delta \gamma^{C(g)}$, where $C(g)$ denotes the number of possible colors. 
Suppose, for the sake of contradiction, that $|V(\pi)| > \delta \gamma^{C(g)}$. 
Since the number of vertices in $\Gamma(y, x_0)$ at distance $n$ satisfies 
$|\Gamma(y) \cap D_n(y)| < \delta \gamma^n$, there must exist a vertex 
$v \in p$ with $d(v, y) \ge C(g)$. 
Then, by Lemma~\ref{lem: chain}, we conclude that the length of a maximal chain $\lambda_1 \subseteq \lambda_2 \subseteq \ldots \subseteq \lambda_m$ in $\chi(p)$ is strictly greater than $C(g)+1$, that is, $m > C(g)$. 
Therefore, by the pigeonhole principle, there exist $\lambda_i \subsetneq \lambda_j$ 
with the same color, contradicting our claim~\eqref{eq: claim}. 
The lemma now follows by noting that $C(g) \le \delta^2 N |g|$ and the inequality $|g|\ell\le (|g| - 1)\,|V(\pi)|$
\end{proof}
The following proposition provides information about the order of a element.
\begin{prop}\label{prop: order properties}
The following facts hold:
\begin{itemize}
\item let $\Gamma(p,x_0), \Gamma(q,x_0)$ be two distinct end-isomorphic nested end-cones of $\Gamma$, and let $q\mapright{u} p\subseteq \Gamma(q,x_0)\subseteq $ be a walk. Then, the element $u\in \mathcal{G}(\Gamma)$ has infinite order. 
\item Let $g\in \mathcal{G}(\Gamma)$ be an element of finite order, then the order $o(g)$ of the element $g$ satisfies $o(g)< e^{-N\delta}\delta^{N\delta|g|}\gamma^{\delta^3N^2|g|^2}=O(exp(|g|^2))$.
\end{itemize}
\end{prop}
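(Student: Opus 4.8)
The plan is to derive both items from the two preceding lemmas, treating the two bullets separately. For the first bullet, I would argue by contrapositive: suppose that $u$ has finite order in $\mathcal{G}(\Gamma)$, so that $u^k$ labels a circuit at every vertex for some $k\ge 1$, and in particular $q\mapright{u^k}q$ is a circuit in $\Gamma(q,x_0)$. But the hypotheses of the first bullet are exactly those of Lemma~\ref{lem: geodesic infinite order}: the end-cones $\Gamma(p,x_0), \Gamma(q,x_0)$ are distinct, end-isomorphic and self-similar, and $q\mapright{u}p\subseteq \Gamma(q,x_0)$ is a walk. That lemma then produces an infinite walk $q\mapright{u^{\omega}}\subseteq \Gamma(q,x_0)$ that is \emph{not} a circuit, so no power $u^k$ returns to $q$. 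This contradicts the existence of the circuit $q\mapright{u^k}q$, and hence $u$ must have infinite order. I expect this item to be essentially immediate once Lemma~\ref{lem: geodesic infinite order} is in hand; the only care needed is to note that "$u$ has finite order in $\mathcal{G}(\Gamma)$" forces a genuine \emph{circuit} (closed walk) at $q$, which the lemma rules out.

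For the second bullet, the plan is to realize the order $o(g)$ of a torsion element $g$ as the order $o(g,\pi)$ of a suitable $g$-periodic subgraph, and then invoke the bound of Lemma~\ref{lem: bound order periodic circuits}. Concretely, if $g$ has finite order $o(g)=m$, then $g^{m}$ labels a circuit at every vertex while $g$ itself acts nontrivially on $V(\Gamma)$; choosing a vertex $y$ with $y\cdot g\neq y$ (which exists since $g\neq \mathds{1}$), the orbit circuit $\pi=y\mapright{g^{m}}y$ generates a $g$-periodic subgraph whose order $o(g,\pi)$ equals the length $m$ of the smallest cycle through $y$ under the permutation $g$. Applying Lemma~\ref{lem: bound order periodic circuits} to this $\pi$ gives $o(g)=o(g,\pi)\le |g|\delta\gamma^{N|g|}$. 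The inequality as stated, $\tfrac{o(g)}{\log_2 o(g)}< |g|\delta\gamma^{N|g|}$, is then weaker than (and follows from) the cleaner bound $o(g)\le |g|\delta\gamma^{N|g|}$, since $\log_2 o(g)\ge 1$ for $o(g)\ge 2$; so I would first prove $o(g)\le |g|\delta\gamma^{N|g|}$ and then divide.

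The main obstacle I anticipate is the bookkeeping needed to identify $o(g)$ with $o(g,\pi)$ for a single well-chosen orbit, rather than with some larger least common multiple over several orbits. The subtlety is that $o(g)$ is the order of $g$ as a global permutation of $V(\Gamma)$, which is the least common multiple of the lengths of all cycles of $g$ on $V(\Gamma)$, whereas $o(g,\pi)$ for the subgraph generated by one circuit measures the length of a single cycle. To close this gap I would argue that it suffices to bound the length of \emph{every} individual cycle of $g$ by $|g|\delta\gamma^{N|g|}$ — each such cycle is a $g$-periodic subgraph and Lemma~\ref{lem: bound order periodic circuits} applies uniformly — and then, because $o(g)$ is a divisor-controlled lcm of these lengths, one must either accept a weaker lcm-type bound or explain why the stated inequality (featuring the $\log_2$ denominator) already absorbs this lcm discrepancy. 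I suspect the $\log_2 o(g)$ in the denominator is precisely there to account for the lcm over multiple cycles, so the cleanest route may be to bound each cycle length $\ell$ by $L:=|g|\delta\gamma^{N|g|}$, observe that $o(g)=\lcm$ of cycle lengths each at most $L$, and then use a standard estimate of the form $\lcm(\ell_1,\dots,\ell_r)\le$ (product or Landau-type bound) to recover $\tfrac{o(g)}{\log_2 o(g)}<L$. Pinning down exactly which elementary lcm estimate yields the stated form is the step I would expect to require the most thought.
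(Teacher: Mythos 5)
Your argument for the first bullet is correct and is exactly the paper's: Lemma~\ref{lem: geodesic infinite order} produces the infinite non-circuit walk $q\mapright{u^{\omega}}$, so $q\cdot u^{i}\neq q\cdot u^{j}$ for $i\neq j$, and no power of $u$ can label a circuit at $q$, hence $u$ has infinite order.

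For the second bullet there is a genuine gap, and it sits precisely at the step you flag as unresolved. You are right to abandon your first idea: $o(g)$ is the lcm of all cycle lengths of $g$ on $V(\Gamma)$, so it cannot in general be realized as $o(g,\pi)$ for a single orbit. But your fallback plan --- bound every cycle length by $L=|g|\delta\gamma^{N|g|}$ via Lemma~\ref{lem: bound order periodic circuits} and then invoke ``a standard lcm estimate'' --- cannot be completed, because no such estimate exists: the least common multiple of integers each at most $L$ can be as large as $\lcm(1,2,\dots,L)=e^{(1+o(1))L}$, exponentially larger than anything of the form $L\log_2 o(g)$. Knowing only that each cycle length is at most $L$ is simply not enough information. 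The paper closes the gap by a different, sharper use of the same lemma: write $o(g)=p_1^{\alpha_1}\cdots p_s^{\alpha_s}$; since $o(g)$ is the lcm of the cycle lengths, for each maximal prime power $p_i^{\alpha_i}$ dividing $o(g)$ there exists a cycle whose length is a multiple of $p_i^{\alpha_i}$; that cycle is a $g$-periodic subgraph, so the lemma bounds its length, and hence $p_i^{\alpha_i}$ itself, by $L$. The $\log_2$ then enters only as a count of factors: an integer $n$ has at most $\log_2 n$ distinct prime factors, so $s\le \log_2 o(g)$. In other words, the lemma is applied once per prime-power factor of $o(g)$, not once per cycle, and this per-prime-power bound is the idea your proposal is missing; your suspicion that the $\log_2$ ``absorbs the lcm discrepancy'' is right in spirit, but the mechanism is the prime factorization of $o(g)$, not an estimate on lcm's of bounded integers. (A caveat: even in the paper the final assembly is stated loosely, since $p_i^{\alpha_i}\le L$ together with $s\le\log_2 o(g)$ literally yields $o(g)\le L^{\log_2 o(g)}$ rather than the displayed $o(g)/\log_2 o(g)<L$; but the prime-power decomposition is in any case the key step that distinguishes the paper's proof from your route.)
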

\begin{proof}
The first claim follows from Lemma~\ref{lem: geodesic infinite order}: since $q \mapright{u^{\omega}}\ldots$ is an infinite walk that is not a circuit, we have $u^i \cdot q \neq u^j \cdot q$ for all $i \neq j$. 
Let us now prove the bound on the torsion element $g$. 
Note that $o(g)$ is upper bounded by the least common multiple of all possible values $o(g,\pi)$, where $\pi$ ranges over all $g$-periodic subgraphs. 
By Lemma~\ref{lem: order of cyclic graph}, these subgraphs are, up to end-isomorphism, finitely many; denote this finite family by $C = \{\pi_1, \ldots, \pi_k\}$. 
Each $\pi \in C$ is entirely contained in some end-cone $\Gamma_i$ and is the subgraph induced by the walk $y \mapright{g_1^{\ell}} y$, where $g_1$ is a cyclic shift of $g$. 
Since there are at most $|g|$ cyclic shifts of $g$, there can be at most $|g|$ $g$-periodic graphs $\pi \subseteq \Gamma_i$ with $y \in \Delta_i \cap V(\pi)$. 
Therefore, we may conclude that the cardinality $|C| \le |g| N \delta$. Hence, the least common multiple of all possible values of $o(g,\pi)$, $\pi\in C$, can be upper bounded by $(\frac{|g|-1}{|g|}\delta\gamma^{\delta^2 N|g|})^{|g| N \delta}$. Thus, we conclude $o(g)<(\frac{|g|-1}{|g|}\delta\gamma^{\delta^2 N|g|})^{|g| N \delta}\le e^{-N\delta}\delta^{N\delta|g|}\gamma^{\delta^3N^2|g|^2}$.
\end{proof}
\begin{rem}
Note that in the case of a finite union of context-free graphs 
$\{ \Gamma^{(1)}, \ldots, \Gamma^{(k)} \}$, 
the first part of the proposition clearly remains valid.
As for the second part, instead of considering the parameters $\delta$ and $N$, 
one must take
$
\max_{i \in \{1, \ldots, k\}} \delta_i$ and $
\max_{i \in \{1, \ldots, k\}} N_i,
$
where $\delta_i$ and $N_i$ denote the respective parameters of $\Gamma^{(i)}$.
\end{rem}

\begin{quest}
Is there a {\bf CF-TR} group realizing the upper bound provided in the second point of Proposition~\ref{prop: order properties}?
\end{quest}

As a consequence of the previous proposition we have:
\begin{prop}\label{prop: infiniteness}
Let $\{\Gamma^{(1)}, \ldots, \Gamma^{(k)}\}$ be a collection of context-free graphs, 
and let $G = \mathcal{G}\bigl(\Gamma^{(1)} \sqcup \cdots \sqcup \Gamma^{(k)}\bigr) \in \mathbf{CF\text{-}TR}$
be the associated transition group. Then, the following are equivalent:
\begin{enumerate}
    \item $G$ contains a copy of $\mathbb{Z}$;
    \item $G$ is infinite;
    \item at least one of the graphs $\Gamma^{(1)}, \ldots, \Gamma^{(k)}$ is infinite.
\end{enumerate}
\end{prop}
\begin{proof}
$(1)\Rightarrow (2)$ is trivial. For $(2)\Rightarrow (3)$, note that if all $\Gamma^{(i)}$ are finite, then $G$ is finite. For $(3)\Rightarrow (1)$, assume without loss of generality that $\Gamma^{(1)}$ is infinite. Since there are finitely many end-cone types but infinitely many end-cones, the pigeonhole principle guarantees the existence of at least two nested end-cones. By Proposition~\ref{prop: order properties}, this produces an element of infinite order.
\end{proof}

Another consequence of the previous facts is the following.
\begin{cor}\label{theo: torsion}
     An infinite group $\mathcal{G}\bigl(\Gamma^{(1)} \sqcup \cdots \sqcup \Gamma^{(k)}\bigr)\in {\bf CF-TR}$ does not contain an infinite finitely generated torsion subgroup. 
\end{cor}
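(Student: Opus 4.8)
The plan is to argue by contradiction, leveraging the closure of \textbf{CF-TR} under finitely generated subgroups together with the infinite-order criterion of Proposition~\ref{prop: order properties}. So I would suppose that $H\le G$ is an infinite finitely generated torsion subgroup and derive a contradiction. By Proposition~\ref{prop: f.g. subgroups}, the subgroup $H$ is itself a \textbf{CF-TR} group; that is, $H\cong\mathcal{G}(\Lambda^{(1)}\sqcup\cdots\sqcup\Lambda^{(m)})$ for some finite family of complete context-free inverse graphs $\Lambda^{(1)},\ldots,\Lambda^{(m)}$. This is the crucial first reduction: it guarantees that $H$ itself, and not merely the ambient group $G$, is realized by a transition action on a finite union of context-free graphs, which is exactly what will let us manufacture an element \emph{of $H$} of infinite order.

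Next I would exploit infiniteness. By Proposition~\ref{prop: infiniteness}, since $H$ is infinite at least one of the graphs, say $\Lambda^{(j)}$, must be infinite. An infinite complete inverse $A$-digraph is locally finite (by completeness and determinism each vertex has exactly $|A|$ outgoing edges) and, being context-free, possesses only finitely many end-cone types. Following an infinite geodesic ray emanating from the root and applying the pigeonhole principle to the nested, decreasing sequence of end-cones encountered along it — precisely the observation recorded just before Proposition~\ref{prop: infiniteness} — one obtains two distinct self-similar end-isomorphic end-cones $\Lambda^{(j)}(p,x_0)\subseteq\Lambda^{(j)}(q,x_0)$.

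Finally I would invoke the first bullet of Proposition~\ref{prop: order properties}, extended to finite unions by the Remark immediately following it: any walk $q\mapright{u}p\subseteq\Lambda^{(j)}(q,x_0)$ yields, via the infinite non-circuit walk $q\mapright{u^{\omega}}\cdots$ furnished by Lemma~\ref{lem: geodesic infinite order}, an element $u$ of infinite order in $H=\mathcal{G}(\Lambda^{(1)}\sqcup\cdots\sqcup\Lambda^{(m)})$. This contradicts the assumption that $H$ is torsion, so $H$ must in fact be finite, which is the assertion of the corollary.

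As for difficulty, there is no genuinely hard analytic step: the corollary is essentially an assembly of results already established. The one point deserving care is ensuring that the infinite-order element truly lives inside $H$, and this is exactly why the closure property of Proposition~\ref{prop: f.g. subgroups} is indispensable — it recasts the abstract subgroup $H$ as a concrete transition group on which Lemma~\ref{lem: geodesic infinite order} and Proposition~\ref{prop: order properties} can operate directly. I expect the only mildly technical verification to be the existence of the self-similar pair of end-cones, which reduces to a pigeonhole argument over the finitely many end-cone types met along an infinite geodesic ray.
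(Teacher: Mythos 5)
Your proposal is correct and follows essentially the same route as the paper's own (very terse) proof: it assembles Proposition~\ref{prop: f.g. subgroups} to realize the subgroup as a {\bf CF-TR} group, Proposition~\ref{prop: infiniteness} to get an infinite component, and the self-similar end-cone pigeonhole observation together with the first bullet of Proposition~\ref{prop: order properties} (with its Remark for finite unions) to produce an element of infinite order. Your write-up merely makes explicit the contradiction structure and the pigeonhole step that the paper leaves implicit.
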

\begin{proof}
    This follows from the closure property under taking finitely generated subgroups, as stated in Proposition~\ref{prop: f.g. subgroups} and Proposition~\ref{prop: infiniteness}.
\end{proof}

It remains an open problem whether the Grigorchuk group belongs to the class {\bf CoCF} (see \cite{CioEldFer}), and more generally, whether there are infinite torsion groups in the class {\bf CoCF}. However, using the previous corollary, we can show the following.
\begin{cor}\label{cor:grigorchuk}
No infinite torsion group can be {\bf CF-TR}. In particular, the Grigorchuk group is not {\bf CF-TR}.
\end{cor}
Proposition~\ref{prop: order properties} provides an alternative proof of the decidability of the torsion problem for the transition groups of context-free graphs by checking whether $g^k = \mathds{1}_G$ for some 
$k \le e^{-N\delta} \, \delta^{N\delta |g|} \, \gamma^{\delta^3 N^2 |g|^2}.$ 
This decidability result can also be deduced from the corresponding result for the class of co-context-free groups (see \cite[Theorem 9]{holt}).
\begin{theorem}
With the above notation, for a given element $g\in \mathcal{G}\bigl(\Gamma^{(1)} \sqcup \cdots \sqcup \Gamma^{(k)}\bigr)$ there is an algorithm to check whether $o(g)$ is finite or not. In other words, the torsion problem is solvable for {\bf CF-TR} groups.
\end{theorem}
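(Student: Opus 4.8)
The plan is to combine the quantitative torsion bound of Proposition~\ref{prop: order properties} with the solvability of the word problem, which is available because every group in {\bf CF-TR} is {\bf CoCF} (Theorem~\ref{theo: co-cf transition group of more graphs}). The key observation is that a {\bf CoCF} group has recursive co-word problem — context-free languages are recursive — and hence recursive word problem, so for any word $w\in A^*$ and any integer $n$ one can effectively decide whether $w^n$ represents $\mathds{1}$ in $G$, i.e. whether $w^n\in\mathcal{L}(\Gamma^{(1)}\sqcup\ldots\sqcup\Gamma^{(k)})$. Thus the only thing missing to turn torsion detection into a terminating search is an a priori computable ceiling on the possible order of a torsion element.

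Such a ceiling is exactly what the second bullet of Proposition~\ref{prop: order properties} supplies (together with the subsequent Remark, which adapts $\delta$ and $N$ to a finite union): if $g$ is represented by a word $w$ of length $m=|w|$ and has finite order, then $\frac{o(g)}{\log_2(o(g))}<m\,\delta\,\gamma^{Nm}$, where $\gamma\le 2|A|$, $\delta=\max_i\delta_i$ and $N=\max_i N_i$ are the structural parameters of the graphs. First I would set $C=m\,\delta\,\gamma^{Nm}$ and, since $x/\log_2 x$ tends to infinity monotonically for large $x$, compute the least integer $B$ with $n/\log_2 n\ge C$ for all $n\ge B$; this $B$ is an explicit computable upper bound for the order of any torsion element represented by $w$. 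Note that I use the word length $m$ rather than the geodesic length of $g$: the bound is increasing in $m$, so overestimating the length only enlarges $B$, which is harmless.

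The algorithm then reads as follows. Given $g$ as a word $w$, compute $B$ as above and test, for $n=1,\ldots,B$, whether $w^n\in\mathcal{L}(\Gamma^{(1)}\sqcup\ldots\sqcup\Gamma^{(k)})$ using the word-problem decision procedure. If some $w^n$ represents $\mathds{1}$, then $g$ has finite order (and the least such $n$ equals $o(g)$); if none does, then by the bound $g$ cannot be torsion, so $o(g)=\infty$. Correctness is immediate from Proposition~\ref{prop: order properties}: a torsion $g$ has order at most $B$ and is therefore detected, while a non-torsion $g$ never yields $w^n=\mathds{1}$ within the tested range, indeed for no $n$ at all.

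The hard part will be ensuring that the parameters entering $C$ are genuinely computable from the input. The branching bound $\gamma\le 2|A|$ comes for free, but $N$ (the number of end-cone types) and $\delta$ (the maximal frontier size) are global invariants of the context-free structure, so the algorithm presupposes that the graphs are given in a form — for instance as inverse pushdown automata via Proposition~\ref{prop: charact context-free} — from which the finitely many end-cone types, and in particular $N$ and $\delta$, can be effectively produced. Equivalently, it suffices to have any computable upper bounds for $N$ and $\delta$, since $C$ is monotonic in both and larger values only inflate $B$. I would therefore state the result under the standing assumption that each $\Gamma^{(i)}$ is presented by an inverse pushdown automaton and extract $N$ and $\delta$ from the Muller--Schupp analysis of its configurations graph; this is the only point at which effectivity is not automatic, everything else reducing to the already-established decidability of the word problem.
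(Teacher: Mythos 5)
Your proposal is correct and follows essentially the same route as the paper: the quantitative bound $\frac{o(g)}{\log_2(o(g))}< |g|\,\delta\,\gamma^{N|g|}$ from Proposition~\ref{prop: order properties}, the monotonicity of $x/\log_2 x$ to extract a computable ceiling on the order, and then a bounded search for $n$ with $w^n$ in the word problem, which is decidable because {\bf CF-TR} groups are {\bf CoCF}. The only divergence is your closing worry about effectively extracting $N$ and $\delta$ from the input: since the theorem fixes the group (and hence the graphs) and only $g$ is the input, these are constants that the algorithm may use non-uniformly, so the paper does not need, and does not make, your assumption that the graphs be presented as inverse pushdown automata.
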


We know that for $F$ a free group of rank at least $2$, the group $F \times F$ is in {\bf CF-TR}. Indeed, $F$ is in {\bf CF-TR} simply by considering the Cayley graph on free generators and, by Proposition \ref{prop:closure_products}, the class is closed under direct products. This provides an example of a {\bf CF-TR} group with unsolvable conjugacy problem (see e.g. \cite[Theorem 9, Ch.3]{miller}) and unsolvable generalized word problem (\cite[Theorem 4.6]{miller2}).\

\section{Poly-context-free groups and co-context-free transition groups} \label{sec:poly-cf-transition}
In this section, we address the first question of Problem~\ref{prob: general problems cf-trans-group} and, at the end, discuss some conjectures on poly-context-free groups. We first focus on groups whose word problem is the intersection of finitely many inverse-context-free languages (denoted {\bf Poly-ICF}), i.e., intersections of context-free languages accepted by graphs. The following theorem shows that this class is contained in {\bf CF-TR}; it corresponds to the special case of intersections of deterministic context-free languages, which are contained in {\bf Co-CF}.
\begin{theorem}\label{theo: poly with inverse graphs implies cf-trans}
Let $G$ be a group whose word problem $WP(G;A)$ is the intersection of $k$ context-free languages accepted by rooted graphs:
\[
WP(G;A)=\bigcap_{i=1}^k L(\Gamma^{(i)}, x_i)
\]
Then, $G\simeq \mathcal{G}(\Gamma^{(1)}\sqcup\ldots\sqcup \Gamma^{(k)})$ is in {\bf CF-TR}. 
\end{theorem}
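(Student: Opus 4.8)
The plan is to identify the two normal subgroups of $F_A$ defining the groups on each side, that is, to prove the set-theoretic equality
$$WP(G;A)=\mathcal{L}(\Gamma^{(1)},\ldots,\Gamma^{(k)})=\bigcap_{i=1}^k\mathcal{L}(\Gamma^{(i)}),$$
after which $G\simeq F_A/\mathcal{L}(\Gamma^{(1)},\ldots,\Gamma^{(k)})=\mathcal{G}(\Gamma^{(1)}\sqcup\ldots\sqcup\Gamma^{(k)})$ follows directly from the definition of the transition group over a disjoint union, and membership in {\bf CF-TR} is immediate because each $(\Gamma^{(i)},x_i)$ accepts the context-free language $L(\Gamma^{(i)},x_i)$ and is therefore a context-free inverse graph. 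All the languages in play, namely $WP(G;A)$, $L(\Gamma^{(i)},x_i)$ and $\mathcal{L}(\Gamma^{(i)})$, are saturated under free reduction by Lemma~\ref{lem: there are reduced walks}, so I would pass freely between words over $A$ and elements of $F_A$.

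A preliminary step I would not skip is to verify that each $\Gamma^{(i)}$ is \emph{complete}, since this is what makes the right-hand transition group defined in the first place. Given any vertex $y\in V(\Gamma^{(i)})$ and any $a\in A$, connectedness of the inverse graph supplies a walk $x_i\mapright{v}y$; then $vaa^{-1}v^{-1}$ freely reduces to the empty word, so it lies in $WP(G;A)\subseteq L(\Gamma^{(i)},x_i)$, which forces the circuit $x_i\mapright{vaa^{-1}v^{-1}}x_i$ to exist in $\Gamma^{(i)}$ and hence an outgoing $a$-edge at $y$. Since $A$ is involutive closed, every letter then acts as a total and (using the reverse $a^{-1}$-edges together with determinism) bijective map on $V(\Gamma^{(i)})$, so the action of $F_A$ on $V(\Gamma^{(i)})$ is well defined.

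For the equality, the inclusion $\mathcal{L}(\Gamma^{(1)},\ldots,\Gamma^{(k)})\subseteq WP(G;A)$ is the easy direction: by definition $\mathcal{L}(\Gamma^{(i)})=\bigcap_{p}L(\Gamma^{(i)},p)\subseteq L(\Gamma^{(i)},x_i)$, and intersecting over $i$ gives containment in $\bigcap_i L(\Gamma^{(i)},x_i)=WP(G;A)$. The reverse inclusion is the crux and is where normality of the word problem enters. Fix $i$ and let $w\in WP(G;A)$. For an arbitrary $v\in A^*$ the conjugate $vwv^{-1}$ again represents the identity of $G$, hence $vwv^{-1}\in WP(G;A)\subseteq L(\Gamma^{(i)},x_i)$. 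Factoring the resulting circuit as $x_i\mapright{v}y\mapright{w}y'\mapright{v^{-1}}x_i$ and using determinism, the last leg forces $y'=x_i\cdot v=y$, i.e.\ $(x_i\cdot v)\cdot w=x_i\cdot v$. Since $\Gamma^{(i)}$ is connected, the orbit $\{x_i\cdot v:v\in A^*\}$ is all of $V(\Gamma^{(i)})$, so $w$ fixes every vertex, that is $w\in\mathcal{L}(\Gamma^{(i)})$; as $i$ was arbitrary, $w\in\bigcap_i\mathcal{L}(\Gamma^{(i)})$.

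The main obstacle is precisely this translation of the algebraic normality of $WP(G;A)$ into the geometric statement that $w$ fixes \emph{all} vertices rather than merely the root $x_i$: conjugating by every $v$ probes the stabilizer of every vertex in the orbit of $x_i$, and transitivity of that orbit, which makes the argument go through, is exactly connectedness of the inverse graph. Everything else, namely saturation under reduction, the factorization of circuits, and the final identification of $G$ as $\mathcal{G}(\Gamma^{(1)}\sqcup\ldots\sqcup\Gamma^{(k)})\in{\bf CF\text{-}TR}$, is routine once completeness and this equality are established.
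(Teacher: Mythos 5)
Your proof is correct, and it reaches the key identity $WP(G;A)=\bigcap_{i=1}^k\mathcal{L}(\Gamma^{(i)})$ by a genuinely different route than the paper. The paper forms the product graph $(\Gamma,\alpha)=\varprod_{i=1}^k(\Gamma^{(i)},x_i)$, invokes Proposition~\ref{prop: inclusion implies homomorphism} to identify it with $\Cay(G;A)$, and then exploits vertex-transitivity of the Cayley graph: $(\Gamma,\alpha)\simeq(\Gamma,\alpha\cdot u)$ gives $WP(G;A)=\bigcap_i L(\Gamma^{(i)},x_i\cdot u)$ for every $u$, and intersecting over $u$ (with connectedness) yields the claim. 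You bypass the product construction and the Cayley-graph identification entirely: conjugation-invariance of $WP(G;A)$, pushed through determinism of each $\Gamma^{(i)}$, shows directly that any $w\in WP(G;A)$ fixes every vertex $x_i\cdot v$, and connectedness makes that every vertex. The two arguments rest on the same underlying fact --- normality of the word problem is vertex-transitivity of the Cayley graph in disguise --- but yours is more elementary and self-contained, while the paper's packaging through $\Cay(G;A)$ makes the graph-theoretic content (root-shifting in a transitive graph) explicit. A further point in your favor: you verify that each $\Gamma^{(i)}$ is complete (via the circuits $x_i\mapright{vaa^{-1}v^{-1}}x_i$), a hypothesis the definition of $\mathcal{G}(\Gamma^{(1)}\sqcup\ldots\sqcup\Gamma^{(k)})$ genuinely needs and which the paper's proof leaves implicit; this is exactly the kind of gap worth closing.
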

\begin{proof}
Let $(\Gamma, \alpha) = \bigtimes_{i=1}^k (\Gamma^{(i)}, x_i)$ be the $A$-graph defined as the product of the rooted graphs $(\Gamma^{(i)}, x_i)$. The vertex set of $\Gamma$ is the Cartesian product $V(\Gamma) = V(\Gamma^{(1)}) \times \cdots \times V(\Gamma^{(k)})$, 
so each vertex is a $k$-tuple $(v_1, \ldots, v_k)$. A transition labeled by $a \in A$ acts componentwise:
\[
(v_1, \ldots, v_k) \xrightarrow{a} (v_1', \ldots, v_k') \quad \text{if and only if} \quad v_i \xrightarrow{a} v_i' \text{ in } \Gamma^{(i)} \text{ for all } i.
\]
The root of $\Gamma$ is $\alpha = (x_1, \ldots, x_k)$. Since each $\Gamma^{(i)}$ is inverse, it is immediate that $\Gamma$ is also inverse. Moreover, since $WP(G;A) = L(\Gamma, \alpha)$, Proposition~\ref{prop: inclusion implies homomorphism} implies that $\Gamma \simeq \Cay(G;A)$. Recall that for any $u \in A^*$ and $x \in V(\Gamma)$, $x \cdot u$ is the endpoint of the walk $x \xrightarrow{u} (x \cdot u)$ in a complete $A$-graph. Then $\alpha \cdot u = (x_1 \cdot u, \ldots, x_k \cdot u)$ and $(\Gamma, \alpha \cdot u) = \bigtimes_{i=1}^k (\Gamma^{(i)}, x_i \cdot u)$. 
Since $\Gamma$ is a Cayley graph, $(\Gamma, \alpha) \simeq (\Gamma, \alpha \cdot u)$ for all $u \in A^*$, and so $
WP(G;A) = \bigcap_{i=1}^k L(\Gamma, x_i \cdot u)$ for all $u \in A^*$. Using (\ref{eq:everywhere-circuits}), we obtain
\[
WP(G;A) = \bigcap_{i=1}^k \left( \bigcap_{y \in V(\Gamma^{(i)})} L(\Gamma^{(i)}, y) \right) = \bigcap_{i=1}^k \mathcal{L}(\Gamma^{(i)}) = \mathcal{L}(\Gamma^{(1)}, \ldots, \Gamma^{(k)}),
\]
i.e., $G \simeq \mathcal{G}(\Gamma^{(1)} \sqcup \cdots \sqcup \Gamma^{(k)})$.
\end{proof}
An interesting question is whether there exist examples of {\bf CF-TR} groups that are not poly-context-free. The next result shows that any such example must be non-quasi-transitive (for a specific example, see Section~\ref{sec: non-residually}). More precisely, it provides a characterization that can be seen as a kind of converse to the previous result.
\begin{theorem}\label{theo: virtually subgroups}
Let $\{\Gamma^{(1)},\ldots, \Gamma^{(k)}\}$ be a collection of quasi-transitive context-free $A$-graphs. Then, 
$G=\mathcal{G}(\Gamma^{(1)}\sqcup\ldots \sqcup \Gamma^{(k)})$ is a group in {\bf CF-TR} if and only if $G$ is virtually a finitely generated subgroup of the direct product of free groups. In particular, it belongs to the class of poly-context-free groups whose word problem is the intersection of finitely many languages accepted by rooted quasi-transitive context-free graphs.
\end{theorem}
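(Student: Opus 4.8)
The plan is to prove the two implications separately; the substance lies entirely in the implication that starts from the graph data, since the reverse one is a soft consequence of closure properties already established. For the direction asserting that a group which is virtually a finitely generated subgroup of a direct product of free groups lies in \textbf{CF-TR}, I would simply invoke Remark~\ref{rmk:direct-product-CF} together with closure under finite-index overgroups (Proposition~\ref{prop:closure_overgroups}); note also that $G=\mathcal{G}(\Gamma^{(1)}\sqcup\cdots\sqcup\Gamma^{(k)})$ is in \textbf{CF-TR} by construction, so this direction merely records the general principle and makes the stated equivalence non-vacuous.

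For the main direction I would first reduce to a single graph. By Lemma~\ref{lem:subgroup-product}, $G$ embeds into $\prod_{i=1}^{k}\mathcal{G}(\Gamma^{(i)})$, and the class of groups that are virtually a finitely generated subgroup of a direct product of free groups is closed under finite direct products and under passage to finitely generated subgroups; so it suffices to treat one quasi-transitive, complete, context-free inverse $A$-digraph $\Gamma$. The heart of the argument is to embed $\mathcal{G}(\Gamma)$ into a wreath product $\Aut(\Gamma)\wr S_m$. The group $G=\mathcal{G}(\Gamma)$ acts faithfully and transitively on $V(\Gamma)$ on the right, while $\Aut(\Gamma)$ acts on the left and, by Lemma~\ref{lem: uniqueness of automorphism}, does so \emph{freely}; quasi-transitivity means this left action has finitely many orbits, say $m$. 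Choosing orbit representatives identifies $V(\Gamma)$ with $\Aut(\Gamma)\times\{1,\dots,m\}$ as a left $\Aut(\Gamma)$-set, namely a disjoint union of $m$ copies of the regular action. Since every label-preserving graph automorphism commutes with each letter-permutation, the right action of $G$ commutes with the left action of $\Aut(\Gamma)$, so $G$ lands in the centralizer $C_{\mathrm{Sym}(V(\Gamma))}(\Aut(\Gamma))$, which for $m$ copies of the regular representation is exactly $\Aut(\Gamma)\wr S_m=\Aut(\Gamma)^m\rtimes S_m$.

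It then remains to identify $\Aut(\Gamma)$. Because $\Gamma$ is connected and locally finite (over the finite alphabet $A$) and $\Aut(\Gamma)$ acts on it freely and cocompactly, the Milnor--\v{S}varc lemma gives that $\Aut(\Gamma)$ is finitely generated and quasi-isometric to $\Gamma$; and $\Gamma$, being context-free and quasi-transitive, is quasi-isometric to a tree (cf.\ the characterization in \cite{RodContextfree} and Muller--Schupp). Hence $\Aut(\Gamma)$ is a finitely generated group quasi-isometric to a tree, therefore virtually free. Consequently $\Aut(\Gamma)\wr S_m$ is virtually a direct product of finitely many free groups, and the finitely generated subgroup $G$ is virtually a finitely generated subgroup of a direct product of free groups, as required. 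The ``in particular'' clause then follows, since $\mathcal{L}(\Gamma)=\bigcap_{v\in V(\Gamma)}L(\Gamma,v)$ collapses, by quasi-transitivity and Proposition~\ref{prop: inclusion implies homomorphism} (equal rooted languages inside an $\Aut(\Gamma)$-orbit), to a finite intersection $\bigcap_{i=1}^{m}L(\Gamma,v_i)$ of inverse-context-free languages, matching Theorem~\ref{theo: poly with inverse graphs implies cf-trans}.

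I expect the main obstacle to be this last identification, namely establishing that $\Aut(\Gamma)$ is virtually free: it relies on the characterization of quasi-transitive context-free graphs as precisely those quasi-isometric to a tree, and on the theorem that a finitely generated group quasi-isometric to a tree is virtually free. One must also be careful that every ``virtually'' passage is legitimate, i.e.\ that the relevant finite-index subgroups remain finitely generated and that the property ``virtually a finitely generated subgroup of a direct product of free groups'' is a commensurability invariant stable under the product embedding. The wreath-product embedding itself is clean, and it can be sanity-checked on the running example $\Omega$, where $\Aut(\Omega)\cong\mathbb{Z}$, $m=2$, and indeed $\mathbb{Z}\wr S_2\cong\mathbb{Z}^2\rtimes C_2=\mathcal{G}(\Omega)$.
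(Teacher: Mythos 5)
Your proposal is correct, but it reaches the substantive implication by a genuinely different route than the paper. The paper's forward direction is short and language-theoretic: quasi-transitivity plus Proposition~\ref{prop: inclusion implies homomorphism} collapse each $\mathcal{L}(\Gamma^{(i)})=\bigcap_{p\in V(\Gamma^{(i)})}L(\Gamma^{(i)},p)$ to the finite intersection $\bigcap_{p\in F_i}L(\Gamma^{(i)},p)$ over orbit representatives, so that $WP(G;A)$ is a finite intersection of languages accepted by rooted quasi-transitive context-free inverse graphs, and the conclusion ``virtually a finitely generated subgroup of a direct product of free groups'' is then imported wholesale from Theorem~3 of \cite{RodContextfree}; the converse is likewise outsourced to Proposition~12 of \cite{RodContextfree} combined with Theorem~\ref{theo: poly with inverse graphs implies cf-trans}. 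You keep only the collapse step (for the ``in particular'' clause) and replace the appeal to \cite{RodContextfree} by a self-contained structural argument: after the legitimate reduction to a single graph via Lemma~\ref{lem:subgroup-product} (the target class is indeed closed under finite direct products and finitely generated subgroups, since finite-index subgroups of finitely generated groups are finitely generated), you embed $\mathcal{G}(\Gamma)$ into the centralizer of the left $\Aut(\Gamma)$-action, which is free by Lemma~\ref{lem: uniqueness of automorphism}, commutes with the letter permutations by determinism and completeness, and has $m<\infty$ orbits by quasi-transitivity; the centralizer of a semiregular action with $m$ regular orbits is indeed $\Aut(\Gamma)\wr S_m=\Aut(\Gamma)^m\rtimes S_m$. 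Virtual freeness of $\Aut(\Gamma)$ then follows from Milnor--\v{S}varc (the free action on the connected, locally finite graph $\Gamma$ is automatically proper, and cocompact by quasi-transitivity) together with two standard facts the paper itself uses or cites: context-free graphs are quasi-isometric to trees, and finitely generated groups quasi-isometric to trees are virtually free. Your approach buys independence from Theorem~3 and Proposition~12 of \cite{RodContextfree} and yields concrete extra information, namely an explicit embedding $\mathcal{G}(\Gamma)\hookrightarrow \Aut(\Gamma)\wr S_m$ (sharp on the example $\Omega$, where it is an isomorphism onto $\mathbb{Z}\wr S_2$). What it loses is in the converse: invoking Remark~\ref{rmk:direct-product-CF} only places a group that is virtually a finitely generated subgroup of a product of free groups in {\bf CF-TR}, whereas the paper's converse realizes it as the transition group of \emph{quasi-transitive} context-free graphs. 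This suffices for the literal ``if and only if'' (which, as you correctly observe, is vacuous once one notes $G\in{\bf CF-TR}$ by construction), but the realization statement is the content that makes the theorem a genuine characterization of this subclass and is what the surrounding text relies on.
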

\begin{proof}
Fix a generic $A$-graph $\Gamma^{(i)}$ of the family $\{\Gamma^{(1)}, \ldots, \Gamma^{(k)}\}$. Since it is quasi-transitive, there is a finite complete set $F_i\subseteq V(\Gamma^{(i)})$ of representatives of the equivalence relation given by the orbits of $\Aut(\Gamma_i)$. Since by Proposition~\ref{prop: inclusion implies homomorphism} we have that for any $\varphi\in\Aut(\Gamma^{(i)})$, $L(\Gamma^{(i)}, v)=L(\Gamma^{(i)}, \varphi(v))$
we deduce:
\[
\mathcal{L}(\Gamma^{(i)}) = \bigcap_{p\in V(\Gamma^{(i)})} L(\Gamma^{(i)}, p)=\bigcap_{\varphi\in \Aut(\Gamma^{(i)}), p\in F_i} L(\Gamma^{(i)}, \varphi(p))=\bigcap_{ p\in F_i} L(\Gamma^{(i)}, p)
\]
Since the word problem $WP(G;A)$ for $G=\mathcal{G}(\Gamma^{(1)}\sqcup\ldots \sqcup \Gamma^{(k)})$ is the intersection $\bigcap_{i=1}^k \mathcal{L}(\Gamma^{(i)})$ we have that $\mathcal{G}(\Gamma^{(1)}\sqcup\ldots \sqcup \Gamma^{(k)})$ is a poly-context-free group. In particular 
\[
WP(G;A)=\bigcap_{i=1}^k\left( \bigcap_{p\in F_i} L(\Gamma^{(i)}, p)\right)
\]
is the intersection of finitely many context-free languages accepted by quasi-transitive $A$-graphs, hence by \cite[Theorem~3]{RodContextfree} this is equivalent for the group $G$ to be virtually a finitely generated subgroup of the direct product of free groups.
Conversely, by \cite[Proposition~12]{RodContextfree} if $G$ is virtually a finitely generated subgroup of the direct product of free groups, then the word problem $WP(G;A)=\bigcap_{i=1}^k L(\Gamma^{(i)}, x_i)$ is the intersection of finitely many inverse-context-free languages accepted by graphs that are quasi-transitive. Hence, by Theorem~\ref{theo: poly with inverse graphs implies cf-trans} we have that the group $G\simeq \mathcal{G}(\Gamma^{(1)}\sqcup\ldots\sqcup\Gamma^{(k)} )$ is the transition group of the disjoint union of finitely many context-free $A$-graphs that are quasi-transitive.
 \end{proof}
The previous theorem provides another characterization of groups that are virtually finitely generated subgroups of direct products of free groups. In particular, \cite[Theorem~3.1]{RodContextfree} shows that this class coincides with groups whose word problem lies in {\bf Poly-QT-ICF}, i.e., intersections of finitely many languages accepted by graphs that are quasi-transitive and quasi-isometric to a tree. Note that the example in subsection~\ref{sec: example} belongs to {\bf Poly-QT-ICF}, since a quick inspection of Fig.~\ref{fig: omega} shows that $\Omega$ is quasi-transitive.

Following \cite{TaraHolt}, for $c = \langle c_0, \ldots, c_s \rangle \in \mathbb{Z}^{s+1}$ with $s \ge 1$, $c_0, c_s \ne 0$, and $\mathrm{GCD}(c_0,\ldots,c_s) = 1$, the group $G(c)$ is defined by 
\[
G(c) = \langle a,b \mid \mathfrak{R}_c \rangle, \quad \mathfrak{R}_c = \{ [b,b^{a^i}] \ (i \in \mathbb{Z}), \ b^{c_0} (b^a)^{c_1} \cdots (b^{a^s})^{c_s} \},
\] 
using the usual notation $x^y := y^{-1}xy$ and $[x,y] := x^{-1}y^{-1}xy$. These are called \emph{$Gc$-groups}, and a $Gc$-group is \emph{proper} if it is not virtually abelian.
We conclude this section with some remarks on Brough Conjecture~\ref{cj: Brough2}, which asserts that a finitely generated poly-context-free group is solvable if and only if it is virtually abelian. In this direction, we have the following proposition.
\begin{prop}\label{prop: solvable}
    If $G\in {\bf CF-TR}$ is a finitely generated transition group that is solvable, then one of the following holds:
    \begin{itemize}
        \item $G$ is virtually abelian; 
        \item $G$ contains the free abelian group $\mathbb{Z}^{\infty}$ of countably infinite rank.
        \item $G$  has arbitrarily large finite subgroups.
    \end{itemize}
\end{prop}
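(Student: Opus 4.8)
The plan is to convert the statement into one about the Pr\"ufer rank of $G$ and to read off the three alternatives from the structure of its abelian subgroups. Assume that $G$ is neither virtually abelian nor has arbitrarily large finite subgroups; I will produce a copy of $\mathbb{Z}^{\infty}$. First observe that the two escape clauses are exactly what an abelian subgroup of infinite rank provides: if $A\le G$ is abelian of infinite Pr\"ufer rank, then writing that rank as the torsion-free rank plus the $p$-ranks, either the torsion-free rank is infinite, in which case a maximal $\mathbb{Q}$-independent family in $A$ generates a free abelian subgroup of countably infinite rank and $\mathbb{Z}^{\infty}\le G$, or the torsion subgroup of $A$ has infinite rank, in which case $A$ already contains finite subgroups of unbounded order. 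Consequently, \emph{not} being in the second or third alternative forces every abelian subgroup of $G$ to have finite torsion-free rank and bounded torsion, hence finite Pr\"ufer rank.

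By the structure theory of soluble groups of finite rank (Robinson), the absence of abelian subgroups of infinite rank forces the finitely generated soluble group $G$ to have finite Pr\"ufer rank, and a finitely generated soluble group of finite rank is minimax. Thus the entire content of the proposition is reduced to the following assertion, which is the only place the hypothesis $G\in{\bf CF\text{-}TR}$ is used: a finitely generated soluble minimax group with bounded finite subgroups that lies in ${\bf CF\text{-}TR}$ is virtually abelian.

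To prove this, I would argue by exhibiting a forbidden finitely generated subgroup. Since ${\bf CF\text{-}TR}\subseteq{\bf CoCF}$ (Theorem~\ref{theo: co-cf transition group of more graphs}) and both classes are closed under finitely generated subgroups (Proposition~\ref{prop: f.g. subgroups} and \cite{holt}), it suffices to show that a finitely generated soluble minimax group $G$ of bounded torsion that is not virtually abelian contains a finitely generated subgroup that is not ${\bf CoCF}$. Splitting along the Milnor--Wolf dichotomy: if $G$ is virtually nilpotent but not virtually abelian, then passing to a torsion-free nilpotent subgroup of finite index and to suitable powers of a pair of elements with nontrivial commutator yields a copy of the discrete Heisenberg group $H_{3}(\mathbb{Z})$; if instead $G$ has exponential growth, then, being soluble minimax, it contains a copy of a soluble Baumslag--Solitar group $BS(1,n)=\mathbb{Z}[1/n]\rtimes\mathbb{Z}$ with $|n|\ge 2$. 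In both cases the extracted subgroup is finitely generated, soluble, torsion-free, of finite rank and not virtually abelian, so it lies in none of the three alternatives; therefore it cannot itself be ${\bf CF\text{-}TR}$, and this contradiction forces $G$ to be virtually abelian.

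The hard part is precisely this final exclusion: showing that $H_{3}(\mathbb{Z})$ and $BS(1,n)$ are not transition groups of context-free inverse graphs. Corollary~\ref{theo: torsion} is of no help here, since these groups are torsion-free, so one must exploit a genuinely geometric feature of ${\bf CF\text{-}TR}$: the acting graph is quasi-isometric to a tree and the Cayley graph covers it by a transition-preserving covering. I expect the decisive input to be that such a covering cannot accommodate the quadratic distortion of the centre of $H_{3}(\mathbb{Z})$ nor the exponential distortion of the normal subgroup $\mathbb{Z}[1/n]$ in $BS(1,n)$; making this distortion obstruction precise, or alternatively importing an independent proof that these specific groups are not ${\bf CoCF}$, is the crux of the whole argument. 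By contrast, the structural extraction of the Heisenberg and Baumslag--Solitar subgroups is standard soluble-group theory and I would treat it as a black box.
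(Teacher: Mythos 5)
Your argument breaks at the extraction step, independently of the gap you yourself acknowledge at the end. The dichotomy you rely on --- that a finitely generated soluble minimax group with bounded torsion that is not virtually abelian contains either $H_{3}(\mathbb{Z})$ or some $BS(1,n)$, $|n|\ge 2$ --- is false. Consider $G=\mathbb{Z}[1/6]\rtimes_{2/3}\mathbb{Z}$, where the stable letter $t$ acts on the base by multiplication by $2/3$; it is generated by $t$ and $1\in\mathbb{Z}[1/6]$ (the additive span of the powers $(2/3)^{k}$ is the subring $\mathbb{Z}[2/3,3/2]=\mathbb{Z}[1/6]$), and it is metabelian, torsion-free, minimax, of exponential growth, and not virtually abelian. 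Yet it contains no $BS(1,n)$: a relation $xbx^{-1}=b^{n}$ with $|n|\ge 2$ forces, by comparing images in $G/\mathbb{Z}[1/6]\cong\mathbb{Z}$, that $b$ lies in the base; writing the base additively, $xbx^{-1}=(2/3)^{k}b$ while $b^{n}=nb$, so $\left((2/3)^{k}-n\right)b=0$ and hence $b=0$. It contains no copy of $H_{3}(\mathbb{Z})$ either: if $z=[x,y]\neq 1$ were central in $\langle x,y\rangle$, then $z$ lies in the base (as $G$ is metabelian), and any element with nonzero $\mathbb{Z}$-coordinate conjugates $z$ to $(2/3)^{k}z\neq z$, so $x,y$ would both lie in the abelian base, giving $z=1$. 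So this group slips through your case analysis entirely. This failure is exactly why Brough and Holt introduced $Gc$-groups: the correct trichotomy (Theorem~2.12 of \cite{Tara}, resting on \cite{TaraHolt}), which is what the paper invokes, produces $\mathbb{Z}^{\infty}$, or a \emph{proper $Gc$-group} (a strictly larger class than your two families; the example above is $G((-2,3))$), or a finitely generated subgroup with an infinite normal torsion subgroup.

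Second, the step you flag as the crux --- that $H_{3}(\mathbb{Z})$ and $BS(1,n)$ are not in {\bf CF-TR} or {\bf CoCF} --- is left unproved, and the distortion heuristic is not a substitute; note moreover that the tree-like geometry lives on the graph $\Gamma$, not on the group ($F_{2}\times F_{2}$ and $\mathbb{Z}^{2}$ are {\bf CF-TR}), so quasi-isometric-to-a-tree arguments do not transfer to $G$ itself. The missing input is precisely what the paper cites: Proposition~4.13 of \cite{Tara} states that a $Gc$-group which is poly-context-free or co-context-free is virtually abelian; since {\bf CF-TR}~$\subseteq$~{\bf CoCF} and these classes are closed under finitely generated subgroups, this excludes all proper $Gc$-groups at once, covering $BS(1,n)$ and also $H_{3}(\mathbb{Z})$ (which is the $Gc$-group $G((1,-2,1))$). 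The paper then settles the remaining branch of the trichotomy with Corollary~\ref{theo: torsion}: an infinite normal torsion subgroup $U=\langle g_{i}:i\in\mathbb{N}\rangle$ yields finitely generated torsion subgroups $U_{n}=\langle g_{1},\ldots,g_{n}\rangle$, which must be finite and of unbounded order, giving the third alternative. Your Pr\"ufer-rank reduction (Mal'cev, Robinson) is sound as far as it goes, but the two pillars it was meant to support are, respectively, false and absent; repairing the proof amounts to replacing your dichotomy by the Brough--Holt trichotomy and the missing exclusion by their Proposition~4.13, which is essentially the paper's own proof.
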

\begin{proof}
Combining results from \cite{TaraHolt}, Theorem 2.12 in \cite{Tara} shows that a solvable finitely generated group $G$ which is not virtually abelian either contains
\begin{enumerate}
\item $\mathbb{Z}^{\infty}$, or 
\item a proper $Gc$-group, or 
\item a finitely generated subgroup $H$ with an infinite normal torsion subgroup $U$, such that $H/U$ is either free abelian or a proper $Gc$-group.
\end{enumerate} 
By assumption the group $G$ is {\bf CF-TR} then it is {\bf CoCF} and so,
since by \cite[Proposition~4.13]{Tara} a $Gc$-group is poly-context-free or co-context-free if and only if it is virtually abelian, we deduce that case (2) is not possible.
Let us now analyze case (3) and assume that $G$ contains a subgroup $H$ with an infinite normal torsion subgroup $U$. Suppose that $U=\langle g_i: i\in\mathbb{N}\rangle$ and consider the subgroups $U_n=\langle g_i: i\le n\rangle \le U$. The subgroups $U_n$ are clearly finitely generated torsion subgroups of $G$ and thus, by Corollary~\ref{theo: torsion}, they are all finite. Clearly, the order of the subgroups $U_n$ becomes arbitrarily large since $U$ is infinite.
\end{proof}
In Brough Conjecture~\ref{cj: Brough3} it is also conjectured that a poly-context-free group does not have arbitrarily large finite subgroups. In the intersection of {\bf CF-TR} with poly-context-free groups we have the following fact. 
\begin{cor}
If it is true that a group $G$ which is both poly-context-free and {\bf CF-TR} does not have arbitrarily large finite subgroups, then the following are equivalent for a finitely generated group 
    \begin{enumerate}
        \item $G$ is virtually abelian
        \item $G$ is a solvable poly-context-free transition group in ${\bf CF-TR}$
    \end{enumerate}
\end{cor}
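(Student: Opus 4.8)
The plan is to treat the two implications asymmetrically: $(2)\Rightarrow(1)$ carries all the content and is a short case analysis built on top of Proposition~\ref{prop: solvable}, while $(1)\Rightarrow(2)$ is routine. Throughout, the standing hypothesis of the corollary — that a group which is simultaneously poly-context-free and in {\bf CF-TR} has no arbitrarily large finite subgroups — is used exactly once, to kill one of the three alternatives produced by Proposition~\ref{prop: solvable}.

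First I would prove $(2)\Rightarrow(1)$. Let $G$ be a finitely generated solvable group that is both poly-context-free and in {\bf CF-TR}. Since $G\in{\bf CF-TR}$ is finitely generated and solvable, Proposition~\ref{prop: solvable} applies and yields three possibilities: $G$ is virtually abelian; or $G$ contains the free abelian group $\mathbb{Z}^{\infty}$; or $G$ has arbitrarily large finite subgroups. The third possibility is ruled out immediately, since $G$ is by hypothesis both poly-context-free and {\bf CF-TR}, so the standing assumption forbids arbitrarily large finite subgroups. To rule out the second possibility I would invoke that $G$ is poly-context-free: a finitely generated poly-context-free group cannot contain $\mathbb{Z}^{\infty}$, which is precisely the obstruction exploited in Section~\ref{sec: non-residually}. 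With the second and third alternatives excluded, only the first survives, and $G$ is virtually abelian.

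Next I would dispatch $(1)\Rightarrow(2)$. A finitely generated virtually abelian group $G$ has a finite-index subgroup isomorphic to $\mathbb{Z}^{n}=F_1\times\cdots\times F_1$, so $G$ is a finitely generated, virtual direct product of free groups, and in particular solvable. By Remark~\ref{rmk:direct-product-CF} every finitely generated subgroup of a virtual direct product of free groups lies in {\bf CF-TR}, so $G\in{\bf CF-TR}$; and such groups are poly-context-free, since the word problem is the intersection of the finitely many context-free conditions imposed by the free factors. Hence $G$ satisfies every clause of $(2)$.

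The hard part will be the exclusion of $\mathbb{Z}^{\infty}$ in the second step of $(2)\Rightarrow(1)$, i.e. making precise that a finitely generated poly-context-free group contains no copy of $\mathbb{Z}^{\infty}$. The argument I have in mind factors through two facts: that passing to a finitely generated subgroup does not increase the number of context-free languages needed to cut out the word problem (an inverse homomorphic preimage of a context-free language is context-free, so an intersection of $k$ such languages pulls back to an intersection of $k$ such languages), and that the word problem of $\mathbb{Z}^{m}$ is \emph{not} an intersection of fewer than $m$ context-free languages. Granting these, if $G\supseteq\mathbb{Z}^{\infty}$ and $WP(G;A)=\bigcap_{i=1}^{k}L_i$ with each $L_i$ context-free, then the subgroup $\mathbb{Z}^{k+1}\le G$ would have a word problem expressible as an intersection of $k$ context-free languages, contradicting the lower bound. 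This is exactly the reasoning underlying the example of Section~\ref{sec: non-residually}, which is the natural place to anchor the citation.
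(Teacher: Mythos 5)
Your proposal is correct and, at the level of its skeleton, is the paper's own proof. For $(2)\Rightarrow(1)$ the paper does exactly what you do: apply Proposition~\ref{prop: solvable}, discard the arbitrarily-large-finite-subgroups alternative by the corollary's standing hypothesis, and discard the $\mathbb{Z}^{\infty}$ alternative because a poly-context-free group cannot contain $\mathbb{Z}^{\infty}$. For $(1)\Rightarrow(2)$ the paper invokes Example~\ref{thm:example-Z-finite} together with Propositions~\ref{prop:closure_products} and~\ref{prop:closure_overgroups}, which is precisely the content packaged in the Remark~\ref{rmk:direct-product-CF} you cite. The one genuine divergence is at the sub-lemma level: where the paper simply cites \cite{Tara} (Lemma~4.6) for the non-containment of $\mathbb{Z}^{\infty}$, you sketch a proof of it. Two remarks on that sketch: your Fact~B in its sharp form (at least $m$ context-free languages are needed for $WP(\mathbb{Z}^{m})$) is stronger than what the argument requires --- any bound on the rank of free abelian subgroups in terms of the number of languages suffices, and that weaker, qualitative statement is exactly what \cite{Tara} supplies --- and your proposed citation anchor is misplaced: Section~\ref{sec: non-residually} \emph{uses} this lemma (to conclude that the comb group is not poly-context-free), it does not prove it, so the reference should go to Brough's paper. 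Similarly, your justification that a virtually abelian group is poly-context-free only treats the finite-index subgroup $\mathbb{Z}^{n}$ itself; promoting this to the finite extension needs closure of poly-context-free groups under finite-index overgroups, again from \cite{Tara}. Finally, a caveat you share with the paper: the parenthetical ``in particular solvable'' is false in general (any finite non-solvable group is virtually abelian), so $(1)\Rightarrow(2)$ as literally stated fails for, e.g., $A_{5}$; the corollary is evidently meant to be read within the class of solvable groups, in the spirit of Brough Conjecture~\ref{cj: Brough2}, and under that reading both your argument and the paper's go through.
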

\begin{proof}
(1) $\implies$ (2)
 Since $\mathbb{Z}/n\mathbb{Z}$ and $\mathbb{Z}$ can be realized as transition groups of context-free graphs by Example~\ref{thm:example-Z-finite}, then Proposition~\ref{prop:closure_products} and Proposition~\ref{prop:closure_overgroups} imply that finitely generated virtually abelian groups are also in the class of transition groups as well as poly-context-free by \cite{Tara}.
(2) $\implies$ (1)
 The converse implication follows by Proposition~\ref{prop: solvable} and the fact that $\mathbb{Z}^{\infty}$ is not poly-context-free, as it contains free abelian subgroups of rank $k$ for all $k\in\mathbb{N}$, see \cite[Lemma~4.6]{Tara}.
\end{proof}
The previous discussion raises the following natural questions, which will be addressed in the following sections.
\begin{quest}\label{prob: z infinity and unbound finite groups}
\phantom{ }
\begin{itemize}
    \item Is it possible to realize a solvable transition group associated to a context-free graph containing $\mathbb{Z}^{\infty}$ as a subgroup? See Propositon~\ref{prop:solvable-z-infinito}.
    \item Is it true that a transition group associated to a context-free graph does not have arbitrarily large finite subgroups? By the argument at the end of the proof of Proposition~\ref{prop: solvable} this is equivalent to not having an infinite torsion subgroup. See the Section~\ref{subsec: perturbing} and Proposition~\ref{prop:boundary-infinite-torsion}.
\end{itemize}
\end{quest}

\section{Rationality of {\bf CF-TR}} \label{sec:rationality}
Before turning to the questions discussed in the previous section, we first examine a property common to all currently known examples of {\bf CoCF} groups and shared by our class.\
A \emph{sequential transducer} is a quintuple
$T=(Q,q_0, \Sigma, \delta, o)$, where
\begin{enumerate}
    \item  $Q$ is a finite set \emph{states};
    \item  $q_0 \in Q$ is the \emph{initial state};
    \item $\delta$ is a function $Q \times \Sigma \rightarrow Q$ called \emph{transition function};
    \item  $o$ is a function $Q \times \Sigma \rightarrow \Sigma^{*}$ called \emph{output function}.
\end{enumerate}
See, for instance, \cite{berstel}. For the sake of brevity, we denote $\delta (q,\sigma)=\widetilde{q}$ and $o(q,\sigma)=w$ simply with the diagram $q \longfr{\sigma}{w} \widetilde{q}$. It is straightforward that one can extend $o$ and $\delta$ to finite and infinite words. More precisely, if $\widehat{\Sigma}=\Sigma^{*} \cup \Sigma^{\omega}$, we have $\widehat{o}:Q \times \widehat{\Sigma} \rightarrow \widehat{\Sigma}$ and $\widehat{\delta}: Q \times \widehat{\Sigma} \rightarrow Q$ simply by the formula 
$$
\widehat{o}(q,\sigma_1 \sigma_2)=o(q,\sigma_1)o(\delta(q,\sigma_1),\sigma_2) 
\text{  and  }  
\widehat{\delta}(q,\sigma_1 \sigma_2)=\delta(\delta(q_1,\sigma_1),\sigma_2).
$$ 
Furthermore, fixed $q_0$, we have the functions $q_0 \ast: \widehat{\Sigma} \rightarrow \widehat{\Sigma}$ and $q_0 \cdot:Q \rightarrow Q$ with
$$
q_0 \ast w= \widehat{o}(q_0,w)
\text{  and  } 
q_0 \cdot w= \widehat{\delta}(q_0,w). 
$$
A homeomorphism $\varphi:\Sigma^{\omega} \rightarrow \Sigma^{\omega}$ is \emph{rational} if there exists a sequential transducer $(Q,q_0, \Sigma, \delta, o)$ such that $\varphi(\eta)= q_0 \ast \eta$ for all $\eta \in \Sigma^{\omega}$. 

\begin{defn}[\cite{GNS}]
The set of rational homeomorphisms over an alphabet $\Sigma$ is a group $\mathcal{R}_{\Sigma}$ (called the \emph{rational group}).
\end{defn}

Any two rational groups over two different alphabets are isomorphic (so it makes sense to simply write $\mathcal{R}$).

In \cite{BHM} it is shown that the rational group $\mathcal{R}$ is simple and non-finitely generated. We say that a group which embeds in any rational group is \emph{rational}. From a topological point of view $\Sigma^{\omega}$ is a Cantor space where the metric is given by the longest common prefix. In this way, there is a well-defined notion of bi-lipschitz homeomorphism. A transducer associated to a homeomorphism is said to be bi-lipschitz if the homeomorphism is bi-lipschtz. The family of groups which can be represented by bi-lipschitz transducers includes Thompson-like groups, self-similar groups and hyperbolic groups (see \cite{BelkBleakMatu}).
We want to construct a bi-lipschitz transducer for each element of our group belonging to {\bf CF-TR}, so that any such group embeds in $\mathcal{R}$. To fix the notation, let $\{\Gamma_{0}, \ldots, \Gamma_{N}\}$ be the representative of each end-isomorphim class of the context-free graph $(\Gamma, x_{0})$, and let $\{\Delta_{0}, \ldots, \Delta_{N}\}$ be the corresponding frontier vertices. We put $\Gamma_{0}=\Gamma$. As we have discussed at the end of Lemma~\ref{lem: geodesic infinite order}, for each end-cone $\Gamma(x, x_{0})$ of a context-free graph graph $(\Gamma, x_{0})$ we have a certain number of second-level end-cones which are end-isomorphic to some representative among $\{\Gamma_{0}, \ldots, \Gamma_{N}\}$. This means that each $\Gamma_{i}$ has $k(i)$ second-level end-cones that we denote by $\Gamma^{(i)}_{1}, \ldots, \Gamma^{(i)}_{k(i)}$. Each subgraph $\Gamma^{(i)}_{j}$ is end isomorphic to some end-cone, say $\Gamma_{h}\in \{\Gamma_{0}, \ldots, \Gamma_{n}\}$, and let us denote by $\Psi(i,j):\Gamma^{(i)}_{j}\to \Gamma_{h}$ such end-isomorphism and put $\psi(i,j)=h$, where $i\in [0,N]$, $j\in [1,k(i)]$. We also denote by $\Delta_{j}^{(i)}$ the frontier vertices of $\Gamma^{(i)}_{j}$. We now define the alphabet on which our transducer will act:
\[
\Lambda=\{(\Gamma_{j}^{(i)}, v): v\in\Delta_{j}^{(i)}, i\in[1,N], j\in [1,k(i)]\}\cup\{(\Gamma_{j}^{(i)}), i\in[1,N], j\in [1,k(i)]\}\cup \{(\Gamma_{0}), (\Gamma_{0}, x_{0})\}
\]
We say that a letter $\lambda\in\Lambda$ is \emph{final} if it is of the form $\lambda=(\Gamma_{j}^{(i)}, v)$; in this case we put $\lambda(1)=\Gamma_{j}^{(i)}$, $\lambda(2)=v$, while for a non-final letter $\lambda=(\Gamma_{j}^{(i)})$ we put $\lambda(1)=\Gamma_{j}^{(i)}$, $\lambda(2)=0$. Moreover, for any $\Gamma_{j}^{(i)}, \Gamma_{k}^{(h)}$ we say that $\Gamma_{j}^{(i)}<\Gamma_{k}^{(h)}$ if $\Gamma_{k}^{(h)}$ is a second-level end-cone of $\Psi(i,j)(\Gamma_{j}^{(i)})$. Note that in this case we have $h=\psi(i,j)$. We extend this relation of the letters of $\Lambda$ by putting $\lambda_{1}<\lambda_{2}$ if $\lambda_{1}(1)<\lambda_{2}(1)$. A word $w\in \Lambda^{*}$ is called \emph{well-formed}, if $w=\lambda_{0}\ldots \lambda_{\ell}$ with $\lambda_{0}=(\Gamma_{0})$, $\lambda_{\ell}$ is the only final letter of $w$, and $\lambda_{0}<\lambda_{1}<\ldots <\lambda_{\ell}$; we denote by $\mathcal{F}(\Lambda)$ the set of well-formed words on $\Lambda$. Roughly speaking, a well-formed word $w$ represents a vertex  $x\in\Gamma$ and the sequence $\lambda_{0}(1)<\lambda_{1}(1)<\ldots <\lambda_{\ell}(1)$ represents the sequence of end-cones traversed by a geodesic connecting $x_{0}$ to $x$. More precisely, an \emph{end-geodesic} is a sequence of end-cones $\Gamma(x_{0},x_{0})\supset \Gamma(x_{1}, x_{0})\ldots \supset \Gamma(x_{\ell}, x_{0}) $ such that $\|x_{i+1}\|_{x_{0}}=\|x_{i}\|_{x_{0}}+1$ for $i\in [0, \ell-1]$. Note that each $ \Gamma(x_{i+1}, x_{0})$ is a second-level end-cone of $ \Gamma(x_{i}, x_{0})$. A \emph{rooted end-geodesic} is given by end-geodesic $\Gamma(x_{0},x_{0})\supset \Gamma(x_{1}, x_{0})\ldots \supset \Gamma(x_{\ell}, x_{0}), x_{\ell})$ where we fix a vertex $x_{\ell}$ in the last end-cone. Henceforth, when we write $\Gamma(x_{0},x_{0})\supset \Gamma(x_{1}, x_{0})\ldots \supset \Gamma(x_{\ell}, x_{0}) $ we always mean a rooted end-geodesic with root given by the last vertex $x_{\ell}$ in the sequence. We have the following lemma.
\begin{lemma}\label{lem: well-formed}
Let $w=(\Gamma_{0})(\Gamma_{j_{1}}^{(i_{1})})\ldots (\Gamma_{j_{s}}^{(i_{s})})\ldots (\Gamma_{j_{\ell-1}}^{(i_{\ell-1})})(\Gamma_{j_{\ell}}^{(i_{\ell})}, v_{\ell})\in\mathcal{F}(\Lambda)$ be a well-formed word in $\Lambda^{*}$. Then, there is a unique rooted end-geodesic $\Gamma(x_{0},x_{0})\supset \Gamma(x_{1}, x_{0})\ldots \supset \Gamma(x_{\ell}, x_{0}) $ and end-isomorphisms 
\[
\varphi_{s}: \Gamma(x_{s}, x_{0})\to \Gamma_{j_{s}}^{(i_{s})},\, s\in [1,\ell]
\]
where $\varphi_{0}$ is the identity map, $\varphi_{\ell}(x_{\ell})=v_{\ell}$, and satisfying $(\Psi(i_{s-1}, j_{s-1})\circ \left.\varphi_{s-1})\right|_{\Gamma(x_{s}, x_{0})}=\varphi_{s}$. Vice versa, for any rooted end-geodesic $\Gamma(x_{0},x_{0})\supset \Gamma(x_{1}, x_{0})\ldots \supset \Gamma(x_{\ell}, x_{0})$ there is a unique well-formed word satisfying the previous properties. 
\end{lemma}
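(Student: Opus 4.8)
The plan is to isolate a single recursive descent that builds the nested end-cones together with the end-isomorphisms, and then to observe that the two directions of the statement are step-by-step inverses of this one recursion, so that existence and uniqueness both follow by induction on $\ell$. The geometric fact underpinning everything is that an end-isomorphism $\psi\colon\Gamma(v,x_0)\to\Gamma(w,x_0)$ restricts to a type-preserving bijection between the second-level end-cones of its domain and those of its codomain, each restriction again being an end-isomorphism. Indeed $\psi$ is an isomorphism of inverse graphs carrying $\Delta(v,x_0)$ onto $\Delta(w,x_0)$, hence it carries the connected components of $\Gamma(v,x_0)\setminus\Delta(v,x_0)$ — which are exactly the second-level end-cones — onto those of $\Gamma(w,x_0)\setminus\Delta(w,x_0)$; since the frontier of a second-level end-cone consists precisely of its vertices adjacent to the parent frontier, this is preserved as well, so each $\psi|_{\Gamma(q,x_0)}$ is an end-isomorphism. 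I shall also use that a rooted end-geodesic is the same datum as its final root $x_\ell$ (with $\ell=\|x_\ell\|_{x_0}$), since each $\Gamma(x_s,x_0)$, $s<\ell$, is forced to be the level-$s$ end-cone containing $x_\ell$.

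For the forward direction I would induct on $\ell$, reading the word left to right. Put $\varphi_0=\mathrm{id}$ on $\Gamma(x_0,x_0)=\Gamma=\Gamma_0$. Assuming $\Gamma(x_{s-1},x_0)$ and $\varphi_{s-1}\colon\Gamma(x_{s-1},x_0)\to\Gamma_{j_{s-1}}^{(i_{s-1})}$ have been built, the well-formedness constraint $\lambda_{s-1}<\lambda_s$ says that $\Gamma_{j_s}^{(i_s)}$ is a second-level end-cone of $\Psi(i_{s-1},j_{s-1})(\Gamma_{j_{s-1}}^{(i_{s-1})})=\Gamma_{\psi(i_{s-1},j_{s-1})}$; since the representatives $\Gamma_0,\dots,\Gamma_N$ are pairwise non-end-isomorphic this forces $i_s=\psi(i_{s-1},j_{s-1})$ (at the top level $s=1$ one reads $\Gamma_{j_1}^{(i_1)}$, necessarily with $i_1=0$, directly as a second-level end-cone of $\Gamma_0=\Gamma$, taking the composition below to be $\varphi_0=\mathrm{id}$). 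Composing, $\Phi_{s-1}:=\Psi(i_{s-1},j_{s-1})\circ\varphi_{s-1}$ is an end-isomorphism onto $\Gamma_{i_s}$, so by the geometric fact the second-level end-cone $\Gamma_{j_s}^{(i_s)}$ has a unique preimage second-level end-cone of $\Gamma(x_{s-1},x_0)$; this is $\Gamma(x_s,x_0)$, and setting $\varphi_s:=\Phi_{s-1}|_{\Gamma(x_s,x_0)}$ realizes the required compatibility. After $\ell$ steps I pin down the root by $x_\ell:=\varphi_\ell^{-1}(v_\ell)$, the unique frontier vertex of $\Gamma(x_\ell,x_0)$ mapped to $v_\ell\in\Delta_{j_\ell}^{(i_\ell)}$; the remaining $x_s$ are then frontier vertices of the level-$s$ cones containing $x_\ell$, giving a genuine rooted end-geodesic. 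Uniqueness is immediate: the choice $\varphi_0=\mathrm{id}$ and the compatibility relation force every $\varphi_s$ to be the displayed restriction and every $\Gamma(x_s,x_0)$ to be the unique preimage cone, while $v_\ell$ forces $x_\ell$.

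The converse runs the same recursion in reverse, and is where I would locate the only real content. Given a rooted end-geodesic, equivalently its root $x_\ell$, set $\varphi_0=\mathrm{id}$ and, having $\varphi_{s-1}$, observe that $\Gamma(x_s,x_0)$ is a second-level end-cone of $\Gamma(x_{s-1},x_0)$; applying $\Phi_{s-1}=\Psi(i_{s-1},j_{s-1})\circ\varphi_{s-1}$ carries it onto a second-level end-cone of $\Gamma_{i_s}$, which must be $\Gamma_{j_s}^{(i_s)}$ for a unique $j_s\in[1,k(i_s)]$; this reads off the letter $\lambda_s$ and defines $\varphi_s$, and the final letter is decorated with $v_\ell:=\varphi_\ell(x_\ell)$. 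The resulting word is well-formed — it has a single final letter and satisfies $\lambda_{s-1}<\lambda_s$ by construction — and each $j_s$ is uniquely determined at its step, so the word is unique; as the two procedures are mutually inverse we obtain the asserted bijection. The main obstacle, and the step I expect to demand the most care, is the geometric fact invoked at the outset together with its consequence that preimage cones and their end-isomorphisms are unique: this rigidity (a manifestation of Lemma~\ref{lem: uniqueness of automorphism}) is precisely what makes each step of the recursion deterministic and hence drives both halves of the statement.
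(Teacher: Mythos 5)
Your proof is correct and takes essentially the same route as the paper's: the same induction along the word (resp.\ the geodesic), at each step composing $\Psi(i_{s-1},j_{s-1})\circ\varphi_{s-1}$ and pulling back (resp.\ pushing forward) a second-level end-cone, with the two directions being mutually inverse recursions. Your only additions --- the explicit verification that an end-isomorphism induces a frontier-preserving bijection on second-level end-cones (which the paper uses implicitly) and the observation that a rooted end-geodesic is determined by its final root $x_\ell$ --- are both correct and merely make the paper's argument more self-contained.
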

\begin{proof}
By induction on the parameter $s$. If $s=0$, then $\varphi_{0}$ the identity satisfies the desired conditions. Since $w$ is well-formed, $\Gamma_{j_{1}}^{(i_{1})}$ is a second-level end-cone that is a subgraph of $\Gamma_{0}$. In this case, necessarily $\Gamma(x_{1}, x_{0})=(\Gamma_{j_{1}}^{(i_{1})}, x_{1})$ and we put $\varphi_{1}=\left.\varphi_{0}\right|_{\Gamma(x_{1}, x_{0})}$. Let us assume that we have proved that there is an rooted end-geodesic 
\[
\Gamma(x_{0},x_{0})\supset \Gamma(x_{1}, x_{0})\ldots \supset \Gamma(x_{s-1}, x_{0})
\]
and end-isomorphisms $\varphi_{i}$, $i\in [1,s]$, satisfying the statement of the lemma. Now, consider the end-isomorphism $\Psi(i_{s-1}, j_{s-1}):\Gamma_{j_{s-1}}^{i_{s-1}}\to \Gamma_{h}$, $h=\psi(i_{s-1}, j_{s-1})$ and consider the end-isomorphism $\phi_{s}= \Psi(i_{s-1}, j_{s-1})\circ \varphi_{s-1}:\Gamma(x_{s-1}, x_{0})\to \Gamma_{h}$. Since $w$ is well-formed, $\Gamma_{j_{s}}^{(i_{s})}$ is a second-level end-cone of $\Gamma_{h}$. Therefore, there is a second-level end-cone $\Gamma(x_{s}, x_{0})=(\phi_{s}^{-1}(\Gamma_{j_{s}}^{(i_{s})}), \phi_{s}^{-1}(v_{s}))$ of $\Gamma(x_{s-1}, x_{0})$, for some $v_{s}\in\Delta_{j_{s}}^{(i_{s})}$. Now, by putting $x_{s}= \phi_{s}^{-1}(v_{s})$ and $\varphi_{s}=\left.\phi_{s}\right|_{\Gamma(x_{s}, x_{0})}$ we have that $\Gamma(x_{s}, x_{0})$ is the unique second-level end-cone of $\Gamma(x_{s-1}, x_{0})$ satisfying $\varphi_{s}(x_{s})=v_{s}$ and $(\Psi(i_{s-1}, j_{s-1})\circ \left.\varphi_{s-1})\right|_{\Gamma(x_{s}, x_{0})}=\varphi_{s}$.\

For the other direction, let $\Gamma(x_{0},x_{0})\supset \Gamma(x_{1}, x_{0})\ldots \supset \Gamma(x_{\ell}, x_{0}) $ be a rooted end-geodesic. Let $\varphi_{0}$ be the identity map, and assume that we have constructed a word 
\[
(\Gamma_{0})(\Gamma_{j_{1}}^{(i_{1})})\ldots (\Gamma_{j_{s}}^{(i_{s})})\ldots (\Gamma_{j_{\ell-1}}^{(i_{\ell-1})}, v_{\ell-1})
\]
and end-isomorphisms $\varphi_{i}$, $i\in [0,\ell-1]$ satisfying the statement of the lemma. Then, let $\Gamma_{j_{\ell}}^{(i_{\ell})}$ to be the second-level end-cone $\Psi(i_{\ell-1}, j_{\ell-1})\circ \varphi_{\ell-1}(\Gamma(x_{\ell}, x_{0}))$ of $\Gamma_{j_{\ell-1}}^{(i_{\ell-1})}$, and $v_{\ell}=\Psi(i_{\ell-1}, j_{\ell-1})\circ \varphi_{\ell-1}(x_{\ell})$. Then, 
\[
(\Gamma_{0})(\Gamma_{j_{1}}^{(i_{1})})\ldots (\Gamma_{j_{s}}^{(i_{s})})\ldots (\Gamma_{j_{\ell-1}}^{(i_{\ell-1})})(\Gamma_{j_{\ell}}^{(i_{\ell})}, v_{\ell})
\]
satisfies the statement of the lemma, and the associated rooted end-geodesic is exactly $\Gamma(x_{0},x_{0})\supset \Gamma(x_{1}, x_{0})\ldots \supset \Gamma(x_{\ell}, x_{0}) $.
\end{proof}
\noindent
By the previous Lemma~\ref{lem: well-formed} for each well-formed word 
\[
w=(\Gamma_{0})(\Gamma_{j_{1}}^{(i_{1})})\ldots (\Gamma_{j_{s}}^{(i_{s})})\ldots (\Gamma_{j_{\ell-1}}^{(i_{\ell-1})})(\Gamma_{j_{\ell}}^{(i_{\ell})}, v_{\ell})\in\mathcal{F}(\Lambda)
\]
we may associate a vertex $x_{\ell}=\varphi_{\ell}^{-1}(v_{\ell})$, and vice versa, given a vertex $x_{\ell}$ in $\Gamma$, there is a unique rooted end-geodesic  $\Gamma(x_{0},x_{0})\supset \Gamma(x_{1}, x_{0})\ldots \supset \Gamma(x_{\ell}, x_{0}) $ for which we may associate a unique well-formed word that we denote by $w[x_{\ell}]$. Therefore, there is a one to one correspondence between vertices of $\Gamma$ and well-formed words. For any word $w\in\Lambda^{*}$, let $w\Lambda^{\omega}$ be the clopen set formed by the infinite words starting with the word $w$. By, Lemma~\ref{lem: well-formed} we have the following fact.
\begin{lemma}\label{lem: bijection}
The map $\Xi: V(\Gamma)\to \mathcal{F}(\Lambda)\Lambda^{\omega}$, defined by $\Xi(x)=w[x]\Lambda^{\omega}$, $\forall x\in V(\Gamma)$, is a bijection.
\end{lemma}
\noindent
This is a key point that will be used in proving that the action of each letter $a\in A$ on $\Gamma$ is rational. 
We now define the sequential transducer $\mathcal{T}_{\Gamma}$. This transducer acts on $\Lambda$ and has set of states 
\[
Q_{\Gamma}=\left\{(\Gamma_{j}^{(i)},a): i\in[1,N], j\in [1,k(i)],  a\in A\right\}\cup\left\{(\Gamma_{0}, a): a\in A\right\}\cup\{E\}
\]
We now describe the set of transitions. The state $E$ is a sink state, which means that we have the transitions $E\mapright{\lambda\mid \lambda}E$ for all $\lambda\in \Lambda$. For each fixed $a\in A$ we now describe a transducer that mimics the action of the letter $a$ when we are in a vertex $x$ encoded in the closed set $\Xi(x)$. This is done by first checking whether the word in the input encodes a well-formed word representing a vertex $x$, and then changing the last letters to match the new vertex $x\cdot a$ in the new encoding in the output. The transitions are defined as follows.
 We start by describing how the initial state acts:
\begin{equation}\label{eq: initial}
\begin{cases}
(\Gamma_{0}, a)\mapright{\lambda|1}(\Gamma_{0}, a)& \mbox{if }\lambda=(\Gamma_{0})\\
(\Gamma_{0}, a)\mapright{\lambda|\lambda}E& \mbox{ in the other cases when }\lambda \mbox{ is not final. }
\end{cases}
\end{equation}
where the case when $\lambda$ is final is treated later in (\ref{eq: stay}) and (\ref{eq: new}). 
The transition of a generic state $(\Gamma_{j}^{(i)}, a)$ with input $\lambda$ depends on whether $\lambda(1)=\Gamma_{s}^{(h)}<\Gamma_{j}^{(i)}$ and whether $\lambda$ is final or not. Let us first assume $\lambda(1)=\Gamma_{s}^{(h)}<\Gamma_{j}^{(i)}$. Now, if $\lambda$ is not final we have the following transition:
\begin{equation}\label{eq: not final}
(\Gamma_{j}^{(i)}, a)\longfr{\lambda}{\lambda'}(\lambda(1), a), \mbox{ where }\lambda'=(\Gamma_{j}^{(i)}),  \mbox{\bf  if }\lambda(2)=0\mbox{ and }\lambda(1)<\Gamma_{j}^{(i)};\\
\end{equation}
in case it is final, we are in three cases (\ref{eq: get back}), (\ref{eq: stay}), (\ref{eq: new}) described below and the action depends on the end-cone $\lambda(1)=\Gamma_{s}^{(h)}$ and the vertex $\lambda(2)$. In the first case, when applying the letter $a$ to the vertex $\lambda(2)$ we get back to the previous end-cone:
\begin{align}\label{eq: get back}
&(\Gamma_{j}^{(i)}, a)\longfr{\lambda}{\lambda'}E, \lambda'=(\Gamma_{j}^{(i)}, z),  \mbox{ \bf  if }\lambda(2)\neq 0, \Gamma_{s}^{(h)}<\Gamma_{j}^{(i)},\\& \mbox{ and }\lambda(2)\mapright{a}\Psi(i,j)(z) \subseteq \Psi(i,j)(\Gamma_{j}^{(i)}) \mbox{ is an edge in } \Psi(i,j)(\Gamma_{j}^{(i)})\mbox{ with }z\in\Delta_{j}^{(i)}. \notag
\end{align}
The second case is when applying the letter $a$ to the vertex $\lambda(2)$ we are staying in the same frontier vertices of the end-cone $\lambda(1)=\Gamma_{s}^{(h)}$:
\begin{align}\label{eq: stay}
& (\Gamma_{j}^{(i)}, a)\vlongfr{\lambda}{\lambda'\lambda''}E, \mbox{ where }\lambda'=(\Gamma_{j}^{(i)}),\lambda''=(\Gamma_{s}^{(h)}, z),  \mbox{ \bf if }\lambda(2)\neq 0, \Gamma_{s}^{(h)}<\Gamma_{j}^{(i)}  \\
&\mbox{ and }\lambda(2)\mapright{a}z\subseteq \Gamma_{s}^{(h)} \mbox{ is an edge in }\Gamma_{s}^{(h)},\mbox{ with }z\in\Delta_{s}^{(h)} \notag.
\end{align}
The last case occurs when applying the letter $a$ to the vertex $\lambda(2)$ we are moving to a frontier vertex on a second-level end-cone $\Gamma_{t}^{(m)}$ of $\lambda(1)=\Gamma_{s}^{(h)}$:
\begin{align}\label{eq: new}
&(\Gamma_{j}^{(i)}, a)\vvvlongfr{\lambda}{\lambda'\lambda''\lambda'''}E, \mbox{ where }\lambda'=(\Gamma_{j}^{(i)}),\lambda''=(\Gamma_{s}^{(h)}), \lambda'''=(\Gamma_{t}^{(m)}, z)   \mbox{ \bf if }\lambda(2)\neq 0,\\
&\Gamma_{t}^{(m)}<\Gamma_{s}^{(h)}<\Gamma_{j}^{(i)}, \mbox{ and }\Psi(s,h)(\lambda(2))\mapright{a}z \mbox{ is a walk in }\Psi(s,h)(\Gamma_{s}^{(h)})=\Gamma_{m},\mbox{ with }z\in\Delta_{t}^{(m)}. \notag
\end{align}
If the previous condition $\lambda(1)=\Gamma_{s}^{(h)}<\Gamma_{j}^{(i)}$ does not hold, we add the transition
\begin{equation}\label{eq: not well formed}
(\Gamma_{j}^{(i)}, a)\vlongfr{\lambda}{\lambda'\lambda}E, \lambda'=(\Gamma_{j}^{(i)})\mbox{ whether }\lambda\mbox{ is final or not}.
\end{equation}
We will prove that each state $(\Gamma_{0}, a)$ represents the action of the letter $a\in A$ of each vertex of $\Gamma$. We have the following lemma.
\begin{lemma}\label{lem: well-formed fixed}
For any word $w=\lambda_{0}\ldots \lambda_{k}\in\Lambda^{*}$ formed by non-final letters, with $\lambda_{0}=(\Gamma_{0})$ and satisfying $\lambda_{0}<\ldots <\lambda_{k}$ we have:
\[
(\Gamma_{0}, a)\ast w=\lambda_{0}\ldots \lambda_{k-1}, \quad (\Gamma_{0}, a)\cdot w=(\lambda_{k}(1), a)
\]
for any letter $a \in A$.
\end{lemma}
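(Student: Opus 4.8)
The plan is to argue by induction on $k$, the number of non-final letters following $\lambda_0$. The guiding picture is that the transducer, started in $(\Gamma_0,a)$, performs a \emph{delay-by-one} pass over the strictly increasing chain $\lambda_0<\cdots<\lambda_k$: it never alters a non-final letter, it postpones emitting each letter by one step (recording the most recently read end-cone in its current state), so that after consuming $\lambda_0\cdots\lambda_k$ it has emitted exactly $\lambda_0\cdots\lambda_{k-1}$ and sits in the state $(\lambda_k(1),a)$ tracking the deepest cone seen. Along such an input only two rules can ever fire: the initial rule (\ref{eq: initial}) on the first letter $(\Gamma_0)$, and the non-final rule (\ref{eq: not final}) on each subsequent letter. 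The remaining rules are excluded because every $\lambda_m$ is non-final, so the final-letter rules (\ref{eq: get back})--(\ref{eq: new}) do not apply, and because the chain condition keeps the input cone in the correct order relation to the state's cone, so the sink rule (\ref{eq: not well formed}) does not apply.

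For the base case $k=0$ we have $w=\lambda_0=(\Gamma_0)$, and the first clause of (\ref{eq: initial}) gives $(\Gamma_0,a)\ast\lambda_0=1$ and $(\Gamma_0,a)\cdot\lambda_0=(\Gamma_0,a)=(\lambda_0(1),a)$, which is exactly the assertion with the empty product $\lambda_0\cdots\lambda_{-1}=1$.

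For the inductive step, assume the statement for the prefix $\lambda_0\cdots\lambda_{k-1}$, so $(\Gamma_0,a)\ast(\lambda_0\cdots\lambda_{k-1})=\lambda_0\cdots\lambda_{k-2}$ and $(\Gamma_0,a)\cdot(\lambda_0\cdots\lambda_{k-1})=(\lambda_{k-1}(1),a)$. Using the concatenation formulas for $\widehat{o},\widehat{\delta}$ I would split $(\Gamma_0,a)\ast(\lambda_0\cdots\lambda_k)=\bigl[(\Gamma_0,a)\ast(\lambda_0\cdots\lambda_{k-1})\bigr]\bigl[(\lambda_{k-1}(1),a)\ast\lambda_k\bigr]$, and similarly for the state. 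Reading the single non-final letter $\lambda_k$ from the state $(\lambda_{k-1}(1),a)$, the hypothesis $\lambda_{k-1}<\lambda_k$ forces rule (\ref{eq: not final}) to fire, which outputs $\lambda'=(\lambda_{k-1}(1))=\lambda_{k-1}$ (the non-final letter $\lambda_{k-1}$ being determined by its first component) and moves to $(\lambda_k(1),a)$. Concatenating yields $(\Gamma_0,a)\ast(\lambda_0\cdots\lambda_k)=(\lambda_0\cdots\lambda_{k-2})\lambda_{k-1}=\lambda_0\cdots\lambda_{k-1}$ and $(\Gamma_0,a)\cdot(\lambda_0\cdots\lambda_k)=(\lambda_k(1),a)$, closing the induction.

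The step that demands the most care is confirming that rule (\ref{eq: not final}), and not a competing rule, is the one triggered when $\lambda_k$ is consumed. This is precisely where the two hypotheses are used: each $\lambda_m$ being non-final gives $\lambda_k(2)=0$ and rules out (\ref{eq: get back})--(\ref{eq: new}), while the strict chain $\lambda_0<\cdots<\lambda_k$ places $\lambda_k(1)$ relative to $\lambda_{k-1}(1)=\Gamma_j^{(i)}$ in exactly the order relation demanded by (\ref{eq: not final}), ruling out the sink rule (\ref{eq: not well formed}). Once this dispatch is verified, the rest is a mechanical concatenation of the one-step outputs $1,\lambda_0,\lambda_1,\dots,\lambda_{k-1}$, so I expect no substantive obstacle beyond this bookkeeping.
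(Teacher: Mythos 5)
Your proof is correct and follows exactly the same route as the paper: the paper's own proof is a one-line induction on the length of $w$, with the base case justified by (\ref{eq: initial}) and the inductive step by (\ref{eq: not final}), which is precisely the argument you spell out (including the concatenation formulas for $\widehat{o}$, $\widehat{\delta}$ and the check that no competing rule fires). Your elaboration of the rule dispatch--non-finality excluding (\ref{eq: get back})--(\ref{eq: new}), and the chain condition $\lambda_0<\cdots<\lambda_k$ excluding the sink rule (\ref{eq: not well formed})--is exactly the bookkeeping the paper leaves implicit.
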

\begin{proof}
It follows by induction on the length of $w$, where the base of induction is a consequence of (\ref{eq: initial}) while the general case uses the transitions described in (\ref{eq: not final}).
\end{proof}
\begin{lemma}\label{lem: not well-formed}
For any infinite word $\eta\in\Lambda^{\omega}\setminus\left(\mathcal{F}(\Lambda)\Lambda^{\omega}\right)$ that is not well-formed, we have $(\Gamma_{0}, a)\ast \eta=\eta$ for any letter $a \in A$.
\end{lemma}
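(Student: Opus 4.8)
The plan is to follow the run of the transducer $\mathcal{T}_{\Gamma}$ from the initial state $(\Gamma_{0},a)$ as it reads $\eta$, and to show that under the hypothesis it never fires one of the ``action-applying'' transitions (\ref{eq: get back}), (\ref{eq: stay}), (\ref{eq: new}). Every other transition merely copies the consumed input to the output, possibly with a one-letter delay, so ruling these three out forces $(\Gamma_{0},a)\ast\eta=\eta$.

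First I would record the invariant that governs the run so long as only (\ref{eq: initial}) and (\ref{eq: not final}) are used. By Lemma~\ref{lem: well-formed fixed}, after reading a prefix $u=\lambda_{0}\lambda_{1}\ldots\lambda_{k}$ of $\eta$ consisting of non-final letters with $\lambda_{0}=(\Gamma_{0})$ and $\lambda_{0}<\lambda_{1}<\ldots<\lambda_{k}$, the machine is in the state $(\lambda_{k}(1),a)$ and has output exactly $\lambda_{0}\lambda_{1}\ldots\lambda_{k-1}$, i.e.\ the consumed prefix with its last letter held pending in the current state. Thus along any such ``descent'' the output tracks the input with a single-letter delay, and the pending letter is precisely the non-final letter $(\lambda_{k}(1))=\lambda_{k}$ recorded by the state. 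Note also that the \emph{only} way to reach a state $(\Gamma_{j}^{(i)},a)$ is to begin by reading $(\Gamma_{0})$ and then follow a strictly increasing chain via (\ref{eq: not final}); any deviation sends the run to the sink $E$, which is never left.

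Next I would analyse how the descent can terminate when the next letter $\lambda_{k+1}$ is read from $(\lambda_{k}(1),a)$. If $\lambda_{k+1}$ does not continue the descent, whether final or not, then (\ref{eq: not well formed}) fires, outputting $(\lambda_{k}(1))\,\lambda_{k+1}=\lambda_{k}\lambda_{k+1}$ and passing to $E$; combined with the pending-letter invariant the total output equals the consumed prefix $\lambda_{0}\ldots\lambda_{k+1}$, after which $E$ copies the remaining suffix verbatim, so $(\Gamma_{0},a)\ast\eta=\eta$. The case $\lambda_{0}\neq(\Gamma_{0})$ is the degenerate base instance of this, handled by the sink branch of (\ref{eq: initial}). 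The only way to avoid reaching $E$ altogether is for the descent to continue forever, so that $\eta=\lambda_{0}\lambda_{1}\lambda_{2}\ldots$ is an infinite strictly increasing chain of non-final letters; there the invariant shows that the $i$-th output letter equals $\lambda_{i}$, the $i$-th letter of $\eta$, as soon as $k>i$, so the output converges to $\eta$ and again $(\Gamma_{0},a)\ast\eta=\eta$.

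Finally I would rule out the one remaining possibility: reading a final letter $\lambda_{k+1}$ that \emph{does} continue the descent, which would trigger one of (\ref{eq: get back}), (\ref{eq: stay}), (\ref{eq: new}) and apply the action of $a$. By the observation above, in order to be poised to fire such a transition the consumed prefix $\lambda_{0}\ldots\lambda_{k}$ must be a strictly increasing chain of non-final letters starting at $(\Gamma_{0})$; appending the continuing final letter $\lambda_{k+1}$ then yields a strictly increasing word beginning with $(\Gamma_{0})$ whose unique final letter is $\lambda_{k+1}$, that is, a well-formed word. This contradicts $\eta\notin\mathcal{F}(\Lambda)\Lambda^{\omega}$, so the case cannot occur. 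The main point to get right is exactly this bookkeeping: verifying that the pending letter stored in each state $(\Gamma_{j}^{(i)},a)$ is the last consumed letter, so that dumping it through (\ref{eq: not well formed}) reconstructs the consumed prefix with no spurious symbols, and that the limiting argument in the infinite-descent branch is legitimate; everything else is a direct reading of the transition rules (\ref{eq: initial})--(\ref{eq: not well formed}).
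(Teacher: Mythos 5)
Your proposal is correct and follows essentially the same route as the paper: take the maximal strictly increasing prefix of non-final letters, invoke Lemma~\ref{lem: well-formed fixed} for the one-letter-delay invariant, and split into the empty, finite, and infinite cases, with transitions (\ref{eq: initial}), (\ref{eq: not well formed}) and the sink $E$ handling the finite cases. Your only addition is to spell out why a final letter continuing the chain cannot occur (it would produce a well-formed prefix, contradicting the hypothesis), a step the paper leaves implicit when it asserts $\lambda_{k}\nless\lambda_{k+1}$; this is a welcome clarification but not a different argument.
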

\begin{proof}
Let $w=\lambda_{0}\ldots \lambda_{k}$ be a (possibly empty) maximal prefix of $\eta$ formed by non-final letters, with $\lambda_{0}=(\Gamma_{0})$ and $\lambda_{0}<\ldots <\lambda_{k}$. If $w$ is empty, this means that $\lambda_{0}\neq (\Gamma_{0}), (\Gamma_{0}, x_{0})$, hence by the transition described in (\ref{eq: initial}) we deduce $(\Gamma_{0}, a)\ast \eta=\eta$. Otherwise, $w$ is non-empty, and if $k$ is finite, then $\lambda_{k}\nless \lambda_{k+1}$. By Lemma~\ref{lem: well-formed fixed} we have $(\Gamma_{0}, a)\ast w=\lambda_{0}\ldots \lambda_{k-1}$ and $(\Gamma_{0}, a)\cdot w=(\lambda_{k}(1),a)$. Hence, using the condition $\lambda_{k}\nless \lambda_{k+1}$ and by transition (\ref{eq: not well formed}) we conclude that $(\Gamma_{0}, a)\ast w\lambda_{k+1}=w\lambda_{k+1}$ and $(\Gamma_{0}, a)\cdot w\lambda_{k+1}=E$, i.e. $(\Gamma_{0}, a)\ast \eta=\eta$.
If $k$ is infinite, meaning $w=\eta$, by Lemma~\ref{lem: well-formed fixed} we may conclude our claim $(\Gamma_{0}, a)\ast \eta=\eta$.
\end{proof}
We recall that for a vertex $x\in \Gamma$ and a letter $a\in A$ we denote by $x\cdot a$ the vertex such that $x\mapright{a} (x\cdot a)$ is an edge in $\Gamma$. We have the following lemma. 
\begin{lemma}\label{lem: transducer action}
Let $a\in A$ and $x\in V(\Gamma)$. Then, $(\Gamma_{0}, a)\ast w[x]\eta=w[x\cdot a]\eta$ for all $\eta\in\Lambda^{\omega}$.
\end{lemma}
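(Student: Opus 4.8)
The plan is to split the run of $\mathcal{T}_\Gamma$ on the input $w[x]\eta$, started from $(\Gamma_0,a)$, into three phases: consuming the maximal non-final prefix of $w[x]$, processing the single final letter, and copying the tail $\eta$; I then concatenate the three outputs and check that the result is exactly $w[x\cdot a]\eta$. Throughout, I would read off the geometry of $x$ from Lemma~\ref{lem: well-formed}, which identifies $w[x]$ with the rooted end-geodesic $\Gamma(x_0,x_0)\supset\Gamma(x_1,x_0)\supset\cdots\supset\Gamma(x_\ell,x_0)$ and the end-isomorphisms $\varphi_s\colon\Gamma(x_s,x_0)\to\Gamma_{j_s}^{(i_s)}$ satisfying $(\Psi(i_{s-1},j_{s-1})\circ\varphi_{s-1})|_{\Gamma(x_s,x_0)}=\varphi_s$; this is what lets me translate the transition tables, which speak only about the representatives $\Gamma_j^{(i)}$ and the maps $\Psi(i,j)$, back into genuine edges of $\Gamma$.

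Write $w[x]=\lambda_0\lambda_1\cdots\lambda_{\ell-1}\lambda_\ell$ with $\lambda_0=(\Gamma_0)$, with $\lambda_1,\dots,\lambda_{\ell-1}$ non-final, and with final letter $\lambda_\ell=(\Gamma_{j_\ell}^{(i_\ell)},v_\ell)$ encoding $x=x_\ell$ through $v_\ell=\varphi_\ell(x_\ell)$. Applying Lemma~\ref{lem: well-formed fixed} to the non-final prefix $\lambda_0\cdots\lambda_{\ell-1}$ shows that after reading it the machine has emitted $\lambda_0\cdots\lambda_{\ell-2}$ and sits in state $(\Gamma_{j_{\ell-1}}^{(i_{\ell-1})},a)$: it has copied every letter except the penultimate one, which it now holds in its state so as to decide how the tail of the encoding must be rewritten. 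The case $\ell=0$, i.e.\ $x=x_0$, has no non-final prefix and is treated directly from the initial state, where, since $\Gamma_0$ has no parent cone, only the ``stay'' and ``descend'' alternatives below can occur.

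The crux is the final-letter step. Because $a$ labels a single edge, $\bigl|\,\|x_\ell\cdot a\|_{x_0}-\|x_\ell\|_{x_0}\,\bigr|\le 1$, and since $\Gamma(x_\ell,x_0)$ is the connected component of $\Gamma\setminus D_{\|x_\ell\|-1}(x_0)$ through $x_\ell$, the vertex $x_\ell\cdot a$ must do exactly one of three things: return to the parent cone $\Gamma(x_{\ell-1},x_0)$, stay inside $\Gamma(x_\ell,x_0)$ at the same frontier level, or descend into a second-level end-cone of $\Gamma(x_\ell,x_0)$. These are precisely the hypotheses of transitions (\ref{eq: get back}), (\ref{eq: stay}) and (\ref{eq: new}). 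Pushing the edge $x_\ell\mapright{a}x_\ell\cdot a$ through $\varphi_\ell$ (or through $\Psi(i_{\ell-1},j_{\ell-1})\circ\varphi_{\ell-1}$ for the parent) and using the compatibility relation of Lemma~\ref{lem: well-formed}, I would identify the frontier vertex $z$ named in each transition with the image of $x_\ell\cdot a$ under the relevant end-isomorphism. In all three cases the block emitted, namely $\lambda'$, $\lambda'\lambda''$, or $\lambda'\lambda''\lambda'''$, is exactly the suffix distinguishing $w[x_\ell]$ from $w[x_\ell\cdot a]$; prepending the already-printed $\lambda_0\cdots\lambda_{\ell-2}$ therefore yields $w[x_\ell\cdot a]$ verbatim, and the machine enters the sink $E$.

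Once in $E$, the loops $E\mapright{\lambda\mid\lambda}E$ reproduce the remaining $\eta$ unchanged, so the full output is $w[x\cdot a]\eta$, as required. I expect the only real difficulty to be bookkeeping in the three-case step: one must check in each case that the letter $z$ produced by the transition is genuinely the $\varphi$-image of $x_\ell\cdot a$ for the appropriate map, i.e.\ that the transducer's local computation inside the graph types $\Gamma_j^{(i)}$ faithfully mirrors the global action of $a$ on $\Gamma$. This is exactly where the functoriality of the families $\{\varphi_s\}$ and $\{\Psi(i,j)\}$ from Lemma~\ref{lem: well-formed} has to be invoked with care.
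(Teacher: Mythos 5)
Your proposal is correct and follows essentially the same route as the paper's proof: Lemma~\ref{lem: well-formed fixed} handles the non-final prefix, the three transitions (\ref{eq: get back}), (\ref{eq: stay}), (\ref{eq: new}) are analyzed case by case using the end-isomorphisms and compatibility relations from Lemma~\ref{lem: well-formed} to identify the emitted frontier vertex $z$ with the image of $x\cdot a$, and the sink state $E$ copies the tail $\eta$. Your explicit treatment of the $\ell=0$ case via the initial-state transitions is a small point the paper leaves implicit, but otherwise the two arguments coincide.
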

\begin{proof}
Let
\[
w[x]=\lambda_{0}\ldots\lambda_{\ell}=(\Gamma_{0})(\Gamma_{j_{1}}^{(i_{1})})\ldots (\Gamma_{j_{s}}^{(i_{s})})\ldots (\Gamma_{j_{\ell-1}}^{(i_{\ell-1})})(\Gamma_{j_{\ell}}^{(i_{\ell})}, v_{\ell})
\]
be the well-formed word associated to the (unique) rooted end-geodesic $\Gamma(x_{0},x_{0})\supset \Gamma(x_{1}, x_{0})\ldots \supset \Gamma(x_{\ell}, x_{0}) $ with $x_{\ell}=x$, for which there are  end-isomorphisms $\varphi_{s}: \Gamma(x_{s}, x_{0})\to \Gamma_{j_{s}}^{(i_{s})}$ with $\varphi_{\ell}(x_{\ell})=v_{\ell}$ 
and satisfying $(\Psi(i_{s-1}, j_{s-1})\ast \left.\varphi_{s-1})\right|_{\Gamma(x_{s}, x_{0})}=\varphi_{s}$ for $s\in [1,\ell]$. By Lemma~\ref{lem: well-formed fixed} we have 
\begin{equation}\label{eq: equality}
(\Gamma_{0}, a)\ast \lambda_{0}\ldots \lambda_{\ell-1}=\lambda_{0}\ldots\lambda_{\ell-2},\quad (\Gamma_{0}, a)\cdot w=(\lambda_{\ell-1}(1), a)
\end{equation}
Now, since $\lambda_{\ell}$ is final and $\lambda_{\ell-1}<\lambda_{\ell}$ we are in one of the three possibile transitions (\ref{eq: get back}), (\ref{eq: stay}), (\ref{eq: new}). Suppose that $\lambda_{\ell}=\Gamma_{s}^{(h)}$, and $\lambda_{\ell-1}=\Gamma_{j}^{(i)}$, where $\Gamma_{s}^{(h)}$ is a second-level end-cone of $\Psi(i,j)(\Gamma_{i}^{(j)})=\Gamma_{h}$. 
\begin{itemize}
\item Suppose that condition (\ref{eq: get back}) holds. In this case $v_{\ell}\mapright{a}\Psi(i,j)(z)$ is an edge of $\Gamma_{h}$ with $z\in \Delta_{j}^{(i)}$, and so by (\ref{eq: equality}) and by the definition we get:
\[
(\Gamma_{0}, a)\ast w\eta=\lambda_{0}\ldots\lambda_{\ell-2}(\Gamma_{j}^{(i)},z)\eta,\quad \forall \eta\in\Lambda^{\omega}
\]
We claim that $\Xi(x\cdot a)=\lambda_{0}\ldots\lambda_{\ell-2}(\Gamma_{j}^{(i)},z)\Lambda^{\omega}$. Now, since $\Psi(i,j)^{-1}(v_{\ell})\mapright{a} z$ is an edge of $\Gamma_{j}^{(i)}$ and $\varphi_{\ell}(x)=\varphi_{\ell-1}(x)$,  we deduce that also $x\mapright{a}\varphi^{-1}_{\ell-1}(z)$ is an edge in $\Gamma(x_{\ell-1}, x_{0})$ with $x\cdot a=\varphi^{-1}_{\ell-1}(z)\in\Delta(x_{\ell-1}, x_{0})$. Therefore, by Lemma~\ref{lem: well-formed}, the well-formed word $\lambda_{0}\ldots\lambda_{\ell-2}(\Gamma_{j}^{(i)},z)$ is associated the rooted end-geodesic:
\[
\Gamma(x_{0},x_{0})\supset \Gamma(x_{1}, x_{0})\ldots \supset \Gamma(x_{\ell-2}, x_{0})\supset \Gamma(x\cdot a, x_{0})
\]
and this concludes the proof of our claim.
\item Suppose that condition (\ref{eq: stay}) holds. In this case we have 
\[
(\Gamma_{0},  a)\ast w\eta=\lambda_{0}\ldots\lambda_{\ell-1}(\Gamma_{j_{\ell}}^{(i_{\ell})}, z)\eta,\quad \forall \eta\in\Lambda^{\omega}
\]
where $v_{\ell}\mapright{a}z$ is an edge of $\Gamma_{j_{\ell}}^{(i_{\ell})}$ with $z\in \Delta_{j_{\ell}}^{(i_{\ell})}$. Now, since $\varphi_{\ell}(v_{\ell})=x$ and $\varphi^{-1}_{\ell}(v_{\ell})\mapright{a}\varphi^{-1}_{\ell}(z)$ is an edge in $\Gamma(x_{\ell}, x_{0})$, we conclude that $x\cdot a=\varphi^{-1}_{\ell}(z)$. Hence, $\lambda_{0}\ldots\lambda_{\ell-1}(\Gamma_{j_{\ell}}^{(i_{\ell})}, z)$ is the well-formed word associated to the rooted end-geodesic $\Gamma(x_{0},x_{0})\supset \Gamma(x_{1}, x_{0})\ldots \supset \Gamma(x_{\ell-1}, x_{0})\supset \Gamma(x\cdot a, x_{0})$, i.e., $(\Gamma_{0}, a)\ast w[x]\eta=w[x\cdot a]\eta$ for all $\eta\in\Lambda^{\omega}$.
\item Suppose that condition (\ref{eq: new}) holds. Using the definition, we have
\[
(\Gamma_{0}, a)\ast w\eta=\lambda_{0}\ldots\lambda_{\ell-1}(\Gamma_{j_{\ell}}^{(i_{\ell})})(\Gamma_{t}^{(m)},z)\eta,\quad \forall \eta\in\Lambda^{\omega}
\]
where $\Gamma_{t}^{(m)}<\Gamma_{s}^{(h)}=\Gamma_{j_{\ell}}^{(i_{\ell})}$ and $\Psi(s,h)(v_{\ell})\mapright{a}z$ is an edge in $\Gamma_{m}$ with $z\in \Delta_{t}^{(m)}$.  Since $\Gamma_{t}^{(m)}$ is a second-level end-cone of $\Gamma_{m}$ and $\phi= \Psi(s,h)\circ\varphi_{\ell} :\Gamma(x,x_{0})\to \Gamma_{m}$ is an end-isomorphism, we deduce that $\phi^{-1}(\Gamma_{t}^{(m)})$ is a second level end-cone of $\Gamma(x, x_{0})$ containing $\phi^{-1}(z)$, and since $x\mapright{a}\phi^{-1}(z)$ is an edge in $\Gamma(x,x_{0})$ we have $\phi^{-1}(\Gamma_{t}^{(m)})=\Gamma(x\cdot a, x_{0})$. Take $\varphi_{\ell+1}=\left.\phi\right|_{\Gamma(x\cdot a, x_{0})}$. We clearly have $(\left.\Psi(s,h)\circ\varphi_{\ell})\right|_{\Gamma(x\cdot a, x_{0})}=\varphi_{\ell+1}$. Thus, by Lemma~\ref{lem: well-formed} we have that the rooted end-geodesic 
\[
\Gamma(x_{0},x_{0})\supset \Gamma(x_{1}, x_{0})\ldots \supset \Gamma(x_{\ell}, x_{0})\supset \Gamma(x\cdot a, x_{0})
\]
 is associated to the well-formed word $\lambda_{0}\ldots\lambda_{\ell-1}(\Gamma_{j_{\ell}}^{(i_{\ell})})(\Gamma_{t}^{(m)},z)$ and this is our claim  $(\Gamma_{0}, a)\ast w[x]\eta=w[x\cdot a]\eta$ for all $\eta\in\Lambda^{\omega}$.
\end{itemize}
\end{proof}
\begin{theorem}\label{theo:rational}
The transition group $\mathcal{G}\bigl(\Gamma^{(1)} \sqcup \cdots \sqcup \Gamma^{(k)}\bigr)$ of a collection of context-free complete graphs $\Gamma^{(1)}\ldots, \Gamma^{(k)}$ is rational.
\end{theorem}
\begin{proof}
Since rationality is closed by taking direct products and pass to subgroups, by Lemma~\ref{lem:subgroup-product} we reduce to the case $\mathcal{G}(\Gamma)$.\

As a shorthand notation let us put $\gamma_{a}=(\Gamma_{0},a)$, $a\in A$. By Lemma~\ref{lem: transducer action} and the fact that $\Gamma$ is a graph, we have $(\gamma_{a^{-1}}\gamma_{a})\ast w[x]\eta=w[x]\eta$ and $(\gamma_{a}\gamma_{a^{-1}})\ast w[x]\eta=w[x]\eta$ for all $\eta\in\Lambda^{\omega}$. Moreover, by Lemma~\ref{lem: not well-formed} for all $\xi\in\Lambda^{\omega}\setminus\mathcal{F}(\Lambda)$ we have $(\gamma_{a^{-1}}\gamma_{a})\ast \xi=\xi$ and $(\gamma_{a}\gamma_{a^{-1}})\ast \xi=\xi$. Therefore, we may conclude that each state $\gamma_a$ of $\mathcal{T}_{\Gamma}$ gives rise to a rational bijection $\gamma_a\ast : \Lambda^{\omega}\to \Lambda^{\omega}$ with inverse $\gamma_{a^{-1}}\ast$. Thus, by \cite[Proposition~1.3]{BelkBleakMatu}, each state $\gamma_a$ is a rational homeomorphism of $\Lambda^{\omega}$. Let us prove that the subgroup $H(\Gamma)=\langle \gamma_a: a\in A\rangle$ of the rational group $\mathcal{R}_{\Lambda}$ of all the rational homeomorphisms of $\Lambda^{\omega}$ is isomorphic to the transtion group $\mathcal{G}(\Gamma)$. Consider a word $u=a_{1}\ldots a_{p}\in A^{*}$. By Lemma~\ref{lem: transducer action}, and using a simple induction on the length $p$ of a word $u$ we have that the following equality
\[
(\gamma_{a_{k}}\ldots \gamma_{a_{1}})\ast w[x]\eta=w[x\cdot u]\eta,\,\forall \eta\in\Lambda^{\omega}
\]
holds for any vertex $x\in \Gamma$. It is clear that if $\gamma_{a_{k}}\ldots \gamma_{a_{1}}$ represents the identity in $H(\Gamma)$, then 
\[
w[x]\eta=(\gamma_{a_{k}}\ldots \gamma_{a_{1}})\ast w[x]\eta=w[x\cdot u]\eta.
\]
Now, since the well-formed words form a prefix code, then for an infinite word $\eta$ there is at most one (maximal) prefix that is well-formed, by the previous equality we conclude that $w[x]=w[x\cdot u]$ for all $x\in V(\Gamma)$, i.e., $u\in \mathcal{L}(\Gamma)$ represents the identity in $\mathcal{G}(\Gamma)$. Therefore, the isomorphism $\phi:\{\gamma_a:a\in A\}^*\to A$ defined by $\phi(\gamma_a)=a$ extends to the epimomorphism $\varphi:H(\Gamma)\to \mathcal{G}(\Gamma)$ naturally. To prove that $\varphi$ is an isomorphism suppose that $\varphi(\gamma_{a_{k}}\ldots \gamma_{a_{1}})=u$ represents the identity in $\mathcal{G}(\Gamma)$, then $x\cdot u=x$ for all $x\in\Gamma$, that is equivalent to  $w[x]=w[x\cdot u]$ for all $x\in \Gamma$ which in turn is equivalent to $(\gamma_{a_{k}}\ldots \gamma_{a_{1}})\ast w[x]\eta=w[x]\eta$ for all $\eta\in\Lambda^{\omega}$, $x\in V(\Gamma)$. Now, if $\xi$ is not well-formed, by Lemma~\ref{lem: not well-formed} we have $(\gamma_{a_{k}}\ldots \gamma_{a_{1}})\ast \xi=\xi$. Therefore, we have shown that $(\gamma_{a_{k}}\ldots \gamma_{a_{1}})\ast \xi=\xi$ for all $\xi\in\Lambda^{\omega}$, i.e., $\gamma_{a_{k}}\ldots \gamma_{a_{1}}$ represents the identity in $H(\Gamma)$. This concludes the proof that $H(\Gamma)\simeq \mathcal{G
}(\Gamma)$, i.e., $\mathcal{G}(\Gamma)$ embeds into the rational group $\mathcal{R}_\Lambda$.
\end{proof}

Note that, despite we already have a tree structure associated to the graph, namely the tree of end-cones, the transducer acts on $\Lambda^{\omega}$ which contains a copy of such tree as a subtree.

\begin{rem}
The transducer that represents any element $g$ in a {\bf CF-TR} group is bi-lipschitz. It suffices to notice that any element in $\mathcal{F}(\Lambda)$ is shortened or lengthened by at most $|g|$ letters.
\end{rem}

%%%%%%%%%%%%%%%%%%%%%%%%%%%%%%%%%%%%

\section{Perturbing a graph: a structural result}\label{subsec: perturbing}
Another natural question about these groups is how much small local changes in the graph affect the associated transition groups. 
In this section we investigate what happens to the transition group when we locally perturb a context-free graph $(\Gamma, x_{0})$. The idea is to consider a disk $D_{n}(x_{0})$ and to associate with the graph $\Lambda_{n}=\Gamma\setminus D_{n}(x_{0})$ a group called the boundary group $\mathrsfs{B}_{n}$ at level $n$, and then to give a structural result connecting this group to the transition group $G=\mathcal{G}(\Gamma)$. 
\begin{defn}[Boundary group]
Consider the following subset of $A^{*}$
\[
\mathcal{O}_{n}=\{u\in A^{*}: \mbox{ if }x\mapright{u}x'\mbox{ is a walk in }\Lambda_{n}\mbox{ then }x=x'\}
\]
of the words that either label a circuit totally contained in $\Lambda_{n}$, or whose walk intersects $S_{n}(x_{0})$. 
It is easy to verify that $\mathcal{O}_{n}$ is a submonoid of $A^{*}$ satisfying the property that for any $u\in A^{*}$ and $w\in \mathcal{O}_{n}$ we have $u^{-1}wu\in \mathcal{O}_{n}$. Therefore, if $\pi:A^{*}\to G$ is the natural map, the image $\mathrsfs{B}_{n}=\pi(\mathcal{O}_{n})$ is a normal subgroup of $G$ called the \emph{boundary group at level} $n$.
\end{defn}
We have the following key lemma.
\begin{lemma}
With the above notation, for any $u \in\mathcal{O}_{n} $ and $x\in V(\Gamma)$ we have $x\cdot u^{k}=x$ for some $k\le \left |S_{n}(x_{0})\right|\,\left |u\right |$.
\end{lemma}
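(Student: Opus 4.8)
The plan is to exploit that, since $\Gamma$ is complete and inverse, each letter of $A$ acts as a permutation of $V(\Gamma)$; hence $u$ acts as a permutation and the statement is equivalent to bounding the length $k$ of the cycle of this permutation through $x$. I would prove this by a pigeonhole argument on the pairs (sphere-vertex, offset inside $u$), using the submonoid property of $\mathcal{O}_n$ together with the fact that $S_n(x_0)$ separates the interior from $\Lambda_n$ to force every nontrivial step of the orbit to cross the sphere.

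Concretely, set $m=|u|$ and consider the one-sided walk $\pi=x\mapright{u^\omega}\cdots$, with vertices $v_0,v_1,v_2,\ldots$ so that $v_{jm}=x\cdot u^{j}$. If $x\cdot u=x$ there is nothing to prove, so assume $x\cdot u^{i}\neq x\cdot u^{i+1}$ for every $i$, i.e. every block $x\cdot u^{i}\mapright{u}x\cdot u^{i+1}$ is a non-circuit. Because $\mathcal{O}_n$ is a submonoid and $u\in\mathcal{O}_n$, a block lying entirely in $\Lambda_n$ would have to be a circuit; hence each block leaves $\Lambda_n$, and since $\|\cdot\|_{x_0}$ changes by at most one along an edge, each block must meet the sphere $S_n(x_0)$. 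For the $i$-th block I record the pair $(w_i,\phi_i)\in S_n(x_0)\times\{0,\ldots,m-1\}$, where $w_i$ is the first vertex of that block on $S_n(x_0)$ and $\phi_i$ is its position within the block.

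There are exactly $|S_n(x_0)|\,m$ possible pairs, so among the first $|S_n(x_0)|\,m+1$ blocks two of them, say those of index $i<i'$, produce the same pair $(w,\phi)$. Writing $p$ for the length-$\phi$ prefix of $u$, this means $w=(x\cdot u^{i})\cdot p=(x\cdot u^{i'})\cdot p$; as $p$ acts on $V(\Gamma)$ as a permutation it may be cancelled, giving $x\cdot u^{i}=x\cdot u^{i'}$, and cancelling the permutation $u^{i}$ yields $x\cdot u^{\,i'-i}=x$ with $1\le i'-i\le |S_n(x_0)|\,m=|S_n(x_0)|\,|u|$, which is the asserted bound.

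The step I expect to be the main obstacle is justifying that every non-circuit block genuinely meets $S_n(x_0)$ rather than hiding inside the interior $D_{n-1}(x_0)$, where the defining condition of $\mathcal{O}_n$ imposes nothing: the submonoid property only excludes a block remaining in $\Lambda_n$, and the separator property of $S_n(x_0)$ only converts "leaving $\Lambda_n$" into "crossing the sphere", so a block that both starts and ends in $D_{n-1}(x_0)$ must be handled separately. Here I would use that $\Gamma$ is context-free, hence quasi-isometric to a tree, so that the finite inner region cannot carry an orbit of $u$ that avoids the sphere; equivalently, a nontrivial orbit of an element of $\mathcal{O}_n$ must propagate to the boundary. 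Making this reduction precise, and uniform in $n$, is the delicate part of the argument, after which the pigeonhole count above delivers the bound.
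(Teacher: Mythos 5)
Your core argument --- each letter of $A$ acts as a permutation of $V(\Gamma)$, so blocks of the walk $x\mapright{u^{\omega}}\cdots$ can be compared by cancellation, and a pigeonhole count on pairs (sphere vertex, offset) bounds the first return time --- is exactly the paper's proof; your cancellation step is a cleaner rendering of what the paper calls ``the determinism of $\Gamma$''. The one step you flag as the main obstacle is indeed the crux, and you are right that it is unjustified: the negation of ``the block is totally contained in $\Lambda_n$'' is ``the block has a vertex in $D_n(x_0)$'', not ``the block meets $S_n(x_0)$'', so a non-circuit block can hide entirely inside $D_{n-1}(x_0)$, where membership of $u$ in $\mathcal{O}_n$ imposes no condition at all. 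Be aware that the paper's own proof silently asserts the false dichotomy (``every subwalk intersects $S_n(x_0)$ \ldots indeed, if it were totally contained in $\Lambda_n$ \ldots''), so the gap you isolated is a gap in the published argument as well.

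However, your proposed repair --- that context-freeness, via quasi-isometry to a tree, prevents an orbit of $u\in\mathcal{O}_n$ from living in the interior --- cannot work, because under the literal set-builder definition of $\mathcal{O}_n$ the statement being proved is false. Take $A=\{a,a^{-1},c,c^{-1}\}$ and let $\Gamma$ be an $a$-labeled cycle $x_0\mapright{a}c_1\mapright{a}\cdots\mapright{a}c_{N-1}\mapright{a}x_0$ of length $N\ge 3$ together with a two-sided $c$-labeled line through $x_0$, completed into a complete inverse graph by $c,c^{-1}$-loops on the $c_i$ and $a,a^{-1}$-loops on the line vertices. This graph is quasi-isometric to a line, hence context-free (it is even locally-quasi-transitive with infinite orbits, so it satisfies all standing hypotheses of the section). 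With $n=\lceil N/2\rceil+1$ the cycle lies inside $D_{n-1}(x_0)$, the sphere $S_n(x_0)$ consists of the two line vertices at distance $n$, and every $a$-edge inside $\Lambda_n$ is a loop, so $u=a\in\mathcal{O}_n$ by the formal definition; yet the least $k\ge 1$ with $x_0\cdot a^{k}=x_0$ is $N>2=\left|S_n(x_0)\right|\,\left|u\right|$. So no argument can close the gap as stated. The lemma (and with it your pigeonhole, verbatim) is saved by instead adopting the paper's prose gloss as the definition of $\mathcal{O}_n$ --- every walk labeled $u$ is either a circuit totally contained in $\Lambda_n$ or meets $S_n(x_0)$ --- under which the needed dichotomy holds by fiat; alternatively, keep the literal definition and weaken the bound to $\left|D_n(x_0)\right|\,\left|u\right|$, marking each non-circuit block by a vertex of $D_n(x_0)$ (which it must contain, else it lies in $\Lambda_n$ and is a circuit). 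The weaker bound is all that the downstream results (the torsion statement for $\mathrsfs{B}_{n}$ and Theorem~\ref{theo: structural locally quasi-trans}) actually use.
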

\begin{proof}
Consider the infinite walk $x\mapright{u^{\omega}}\ldots$, and let us denote by $x_{i}^{(j)}$ the vertices of such walk with $x=x_{0}^{(0)}$ and $x_{i}^{(j)}$ for $0 \le i \le |u|$ are all the vertices between $x_{0}^{(j)}$ and $x_{0}^{(j+1)}$ so that $ x_{0}^{(j)} \mapright{u} x_{0}^{(j+1)}$
are the corresponding subwalks of $x\mapright{u^{\omega}}\ldots $. If $x\cdot u=x$ we are done, thus we may suppose $x\cdot u\neq x$. Note that every subwalk $x_{0}^{(j)}\mapright{u} x_{0}^{(j+1)}$ intersects $S_{n}(x_{0})$. Indeed, if $x_{0}^{(j)}\mapright{u} x_{0}^{(j+1)}$ were totally contained in $\Lambda_{n}$, then since $u\in \mathcal{O}_{n}$, we would have $x_{0}^{(j)}=x_{0}^{(j+1)}$, hence $x\cdot u=x$ when $j=0$, a contradiction. 
Now if we have that $x_{i}^{{(j)}}= x_{i}^{{(s)}}=v \in S_{n}(x_{0})$, for some $s\neq j$, then by the determinism of $\Gamma$ we have $x\cdot u^{k}=x$ for some $k$. 
Since the walk $x\mapright{u^{\omega}}\ldots$ is infinite and $S_{n}(x_{0})$ is a finite set. Either one of the two cases happens.
To give the upper bound on $k$, we consider the worst case scenario: each subwalk $x_{0}^{(j)}\mapright{u} x_{0}^{(j+1)}$ contains at most a vertex of $S_{n}(x_{0})$, therefore by the pigeonhole principle it is easy to see that condition $x_{i}^{{(j)}}= x_{i}^{{(s)}} $ occurs for some $j,s\le  \left |S_{n}(x_{0})\right|\,\left |u\right |$. 
\end{proof}
As an immediate consequence of the previous lemma, we have the following corollary.
\begin{cor}\label{cor: torsion perturbation}
For all $n\ge 1$, the boundary group $\mathrsfs{B}_{n}$ at level $n$  is a torsion normal subgroup of $G$ where each element $g$ has order $o(g)\le \left |S_{n}(x_{0})\right|\,\left |g\right |$ (where $|\cdot|$ is the group metric with respect to $A$).
\end{cor}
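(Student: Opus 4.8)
The definition of the boundary group already records that $\mathrsfs{B}_{n}=\pi(\mathcal{O}_{n})$ is a normal subgroup of $G$, so the content of the statement is that every $g\in\mathrsfs{B}_{n}$ has finite order with $o(g)\le |S_{n}(x_{0})|\,|g|$. Since $G=F_A/\mathcal{L}(\Gamma)$ acts faithfully on $V(\Gamma)$, the order $o(g)$ is exactly the order of the permutation that $g$ induces on the vertex set, i.e.\ the least common multiple of the lengths of the cycles of $g$ on $V(\Gamma)$, or equivalently the least $m$ with $x\cdot g^{m}=x$ for all $x$. The plan is therefore to read this cycle structure off the preceding lemma.

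First I would fix $g\in\mathrsfs{B}_{n}$ and choose a word $u\in\mathcal{O}_{n}$ representing it, ideally a geodesic one so that $|u|=|g|$ (see the obstacle below). Applying the preceding lemma vertex by vertex gives, for each $x\in V(\Gamma)$, an exponent $k_{x}\le |S_{n}(x_{0})|\,|u|$ with $x\cdot u^{k_{x}}=x$. In particular the length of the cycle of $g$ through $x$ divides $k_{x}$ and is bounded by $|S_{n}(x_{0})|\,|u|$. Since all cycle lengths then lie in the finite set $\{1,\ldots,|S_{n}(x_{0})|\,|u|\}$, their least common multiple is finite, which already shows that $g$ is torsion and hence that $\mathrsfs{B}_{n}$ is a torsion group.

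To obtain the quantitative bound $o(g)\le |S_{n}(x_{0})|\,|g|$ I would upgrade the per-vertex exponents $k_{x}$ to a single exponent valid simultaneously at every vertex. The mechanism is the finiteness of the sphere together with determinism: by definition of $\mathcal{O}_{n}$, every $u$-block $x\cdot u^{j}\mapright{u}x\cdot u^{j+1}$ of a nontrivial orbit must meet $S_{n}(x_{0})$, and since the graph is inverse, hence co-deterministic so that reversed walks are uniquely determined, two distinct orbit-blocks can never meet $S_{n}(x_{0})$ at the same boundary vertex in the same position of $u$. Assigning to each block such a (position, boundary-vertex) pair injectively bounds the total number of moved vertices, and hence all the relevant periods, by $|S_{n}(x_{0})|\,|u|=|S_{n}(x_{0})|\,|g|$; this is what should yield $x\cdot u^{k}=x$ with one common $k\le |S_{n}(x_{0})|\,|g|$, and therefore $o(g)\le |S_{n}(x_{0})|\,|g|$.

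The hard part will be exactly this last synchronization step. The preceding lemma only produces an exponent depending on the chosen starting vertex, so one must genuinely exploit the injectivity coming from co-determinism of the inverse graph to control all orbits at once, rather than merely bounding each cycle length separately and then taking a least common multiple (which a priori need not respect the stated bound). A secondary technical point to settle is that a shortest representative of $g$ can indeed be taken inside $\mathcal{O}_{n}$: membership in $\mathcal{O}_{n}$ is a property of words and not of group elements, so one must check that the bound can be phrased with the word metric $|g|$ rather than with the length of some possibly longer representative lying in $\mathcal{O}_{n}$.
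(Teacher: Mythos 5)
The first thing to know is that the paper contains no proof of this corollary: it is introduced with the single phrase ``as an immediate consequence of the previous lemma.'' Your proposal is an attempt to spell out that immediacy, and the parts you actually complete are precisely the parts that are immediate: normality of $\mathrsfs{B}_{n}$ is built into its definition, and, granting the lemma, $g$ is torsion because every $\langle g\rangle$-orbit on $V(\Gamma)$ has length at most $|S_{n}(x_{0})|\,|u|$, so $o(g)$ divides $\lcm\{1,2,\dots,|S_{n}(x_{0})|\,|u|\}<\infty$. Furthermore, the two obstacles you flag are genuine gaps in the paper's own deduction, not weaknesses of your approach: the lemma only gives one exponent per vertex, while the order of $g$ is the least common multiple of the orbit lengths, which is not controlled by a bound on each orbit separately; and membership in $\mathcal{O}_{n}$ is a property of words, so nothing guarantees that a geodesic word for $g$ lies in $\mathcal{O}_{n}$, which both your reading and the paper's statement need in order to use the group metric $|g|$.

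The real problem is that the synchronization step you defer cannot be carried out: the inequality $o(g)\le|S_{n}(x_{0})|\,|g|$ is false in general, so no common exponent $k\le|S_{n}(x_{0})|\,|g|$ can exist. Let $A=\{a,a^{-1},b,b^{-1}\}$ and let $\Gamma$ be the bi-infinite $b$-line
\[
\cdots\mapright{b}s_{2}\mapright{b}s_{1}\mapright{b}x_{0}\mapright{b}p_{1}\mapright{b}p_{2}\mapright{b}q_{1}\mapright{b}q_{2}\mapright{b}q_{3}\mapright{b}r_{1}\mapright{b}r_{2}\mapright{b}\cdots
\]
with $a$-edges forming the $2$-cycle $(p_{1}\,p_{2})$, the $3$-cycle $(q_{1}\,q_{2}\,q_{3})$, and a loop at every other vertex. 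This is a complete inverse $A$-digraph, and it is context-free since it is a line with finitely many additional edges. The edge $q_{3}\mapright{a}q_{1}$ gives $d(x_{0},q_{3})=4$, so $S_{4}(x_{0})=\{s_{4},q_{2},q_{3}\}$, and $\Lambda_{4}$ carries only $a$-loops; hence $a\in\mathcal{O}_{4}$, and $g=\pi(a)\in\mathrsfs{B}_{4}$ has $|g|=1$. Every $\langle g\rangle$-orbit has length $1$, $2$, or $3$, so the conclusion of the lemma holds at every single vertex, and yet $o(g)=\lcm(2,3)=6>3=|S_{4}(x_{0})|\,|g|$; in particular the corollary is not a consequence of the lemma, immediate or otherwise. (A variant with both cycles placed among the neighbours of $x_{0}$ violates even the weaker bound obtained by replacing $|S_{n}(x_{0})|$ with $|D_{n}(x_{0})|$.) Your injectivity mechanism is correct as far as it goes, but note two things: the pairs must be taken in $D_{n}(x_{0})$ rather than $S_{n}(x_{0})$, because a block such as $p_{1}\mapright{a}p_{2}$ above stays strictly inside the ball and never meets the sphere; and what the mechanism actually proves is that the \emph{sum} of the lengths of the nontrivial orbits is at most the number of pairs, whereas the order is the least common multiple of those lengths, and a least common multiple can exceed a sum, as $\lcm(2,3)=6>5=2+3$. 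What survives, and what the rest of the section (in particular Theorem~\ref{theo: structural locally quasi-trans}) actually needs, is the qualitative statement your argument does establish: $\mathrsfs{B}_{n}$ is a torsion normal subgroup of $G$, with $o(g)$ dividing $\lcm\{1,\dots,|D_{n}(x_{0})|\,|u|\}$ for any representative $u\in\mathcal{O}_{n}$ of $g$.
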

In view of the structural result contained in Proposition~\ref{theo: virtually subgroups} an interesting case is when the graph $\Lambda_{n}$ can be obtained by a quasi-transitive graph by cutting out the disk $D_{n}(x_{0})$, more precisely we define the following class of graphs.
\begin{defn}
A graph $(\Gamma, x_{0})$ is called \emph{locally-quasi-transitive} if there is a quasi-transitive graph $(\Theta, y_{0})$ and an integer $n\ge 1$ such that $\Gamma\setminus D_{n}(x_{0})=\Lambda_{n}=\Theta\setminus D_{n}(y_{0})$.
\end{defn}
In case all the orbits are infinite we may explicitly calculate the set $\mathcal{O}_{n}$.
\begin{lemma}\label{lem: infinite orbits}
Suppose the quasi-transitive graph $\Theta$ has finitely many orbits $O_{1},\ldots, O_{m}$ which are all infinite. Let $R=\{p_{1}, \ldots, p_{m}\}$ be a set of representative vertices such that $p_{i} \in O_i$ for $1 \leq i \leq m $.
Then,
\[
\mathcal{O}_{n}=\bigcap_{p\in R}L(\Theta, p)=\mathcal{L}(\Theta)
\]
\end{lemma}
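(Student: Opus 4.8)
The plan is to prove the chain of equalities by handling the two equal signs separately: the identity $\bigcap_{p\in R}L(\Theta,p)=\mathcal{L}(\Theta)$ is a soft consequence of quasi-transitivity, while $\mathcal{O}_n=\mathcal{L}(\Theta)$ is a double inclusion in which the hypothesis that all orbits are infinite is what really carries the argument.

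First I would dispose of $\bigcap_{p\in R}L(\Theta,p)=\mathcal{L}(\Theta)$ exactly as in the proof of Theorem~\ref{theo: virtually subgroups}. Since $\Theta$ is quasi-transitive, any two vertices $p,p'$ in a common orbit $O_i$ satisfy $\varphi(p)=p'$ for some $\varphi\in\Aut(\Theta)$, and Proposition~\ref{prop: inclusion implies homomorphism} then gives $L(\Theta,p)=L(\Theta,p')$; grouping $\mathcal{L}(\Theta)=\bigcap_{p\in V(\Theta)}L(\Theta,p)$ orbit by orbit collapses each block to its representative $p_i\in R$. The inclusion $\mathcal{L}(\Theta)\subseteq\mathcal{O}_n$ is then immediate from determinism: if $u\in\mathcal{L}(\Theta)$ and $x\mapright{u}x'$ is a walk inside the subgraph $\Lambda_n\subseteq\Theta$, it is in particular a walk in $\Theta$, while $u\in\mathcal{L}(\Theta)$ also provides the circuit $x\mapright{u}x$, so determinism of $\Theta$ forces $x'=x$ and hence $u\in\mathcal{O}_n$.

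The heart of the proof is the reverse inclusion $\mathcal{O}_n\subseteq\mathcal{L}(\Theta)$. Fix $u\in\mathcal{O}_n$ and an arbitrary vertex $p\in O_i$. Because $\Theta$ has bounded degree (at most $|A|$ edges at each vertex, by determinism of the inverse graph) it is locally finite, so every ball $D_k(y_0)$ is finite; hence the infinite orbit $O_i$ must contain a vertex $q$ with $d(y_0,q)>n+|u|$. Every vertex visited by the length-$|u|$ walk $q\mapright{u}q'$ then lies at distance $>n$ from $y_0$, so the whole walk stays inside $\Lambda_n=\Theta\setminus D_n(y_0)$; the defining property of $\mathcal{O}_n$ forces $q'=q$, i.e. $q\mapright{u}q$ is a circuit. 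Choosing $\varphi\in\Aut(\Theta)$ with $\varphi(q)=p$ and pushing this circuit forward yields $p\mapright{u}p$, so $u\in L(\Theta,p)$; as $p$ was arbitrary, $u\in\mathcal{L}(\Theta)$.

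The main obstacle is precisely this last inclusion, and the delicate point is why the infinite-orbit hypothesis cannot be dropped: $\mathcal{O}_n$ only records information about walks confined to $\Lambda_n$, so to detect a circuit at $p$ one must reach, within the orbit of $p$, a vertex sitting far enough from the removed ball that its entire $u$-walk avoids $D_n(y_0)$ — impossible if the orbit were trapped inside a finite ball. I would also make the small completeness check that the $u$-walk from such a far vertex exists and remains in $\Lambda_n$: this holds because, away from the central ball, $\Lambda_n$ coincides with $\Gamma\setminus D_n(x_0)$, which is complete since $\Gamma$ is, guaranteeing that each intermediate step of the walk stays among vertices at distance $>n$ from $y_0$ together with their incident edges.
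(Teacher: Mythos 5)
Your proposal is correct and takes essentially the same route as the paper's proof: the first equality via automorphisms of $\Theta$ and Proposition~\ref{prop: inclusion implies homomorphism}, the inclusion $\mathcal{L}(\Theta)\subseteq\mathcal{O}_{n}$ by determinism, and the key reverse inclusion $\mathcal{O}_{n}\subseteq\mathcal{L}(\Theta)$ by using the infiniteness of each orbit to pick a vertex so far from the removed disk that its $u$-walk is totally contained in $\Lambda_{n}$, hence forced to be a circuit, and then transporting that circuit back to the representative by an automorphism. The only difference is that you make explicit two points the paper leaves implicit (local finiteness forcing an infinite orbit to contain vertices at arbitrarily large distance, and the completeness/distance check ensuring the $u$-walk exists and stays in $\Lambda_{n}$), which strengthens rather than changes the argument.
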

\begin{proof} 
The equality $\bigcap_{p\in R}L(\Theta, p)=\mathcal{L}(\Theta)$ follows by Proposition~\ref{prop: inclusion implies homomorphism} as two vertices in the same orbits have the same associate language.
Since $\bigcap_{p\in R}L(\Theta, p)=\mathcal{L}(\Theta)$ are words that label a circuit at every point, we clearly have the inclusion $\bigcap_{p\in R}L(\Theta, p)\subseteq \mathcal{O}_{n}$. Conversely, take a word $u\in \mathcal{O}_{n}$, for each $p_{i}\in R$ we claim that $u$ labels a circuit at $p_{i}$. Indeed, since the orbit $O_{i}$ is infinite, we may find a vertex $p_{i}'\in O_{i}$ that is at sufficiently large distance from the disk $D_{n}(x_{0})$ such that $p_{i}'\mapright{u}z$ is totally contained in $\Lambda_{n}$, hence by definition $z=p_{i}'$ and therefore $u$ labels also a circuit at $p_{i}$. Therefore, $u\in \bigcap_{p\in R}L(\Theta, p)$ showing the other inclusion. 
\end{proof}
In case the graph $(\Gamma, x_{0})$ is locally-quasi-transitive where the associated quasi-transitive graph $\Theta$ has all the orbits that are infinite, we say that $(\Gamma, x_{0})$ is \emph{locally-quasi-transitive with infinite orbits}. We are now in position to prove the following structural theorem.
\begin{theorem}\label{theo: structural locally quasi-trans}
Let $(\Gamma, x_{0})$ be a context-free graph that is also locally-quasi-transitive with infinite orbits and let $G= \mathcal{G}(\Gamma)$ be its transition group. Then, there is a short exact sequence
\[
\mathds{1}\rightarrow \mathrsfs{B}_{n}\rightarrow G\rightarrow H\rightarrow \mathds{1}
\]
where $H$ is a group that is virtually a finitely generated subgroup of the direct product of free groups and $\mathrsfs{B}_{n}$ is a torsion normal subgroup where the order of each element $g$ is bounded above by $o(g)\le \left |S_{n}(x_{0})\right|\,\left |g\right |$. In particular we have $H\simeq G/_{\mathrsfs{B}_{n}}$.
\end{theorem}
\begin{proof}
Let $\Theta$ be the graph which is quasi-transitive with infinite orbits, such that there exists an integer $n\ge 1$ with $\Gamma\setminus D_{n}(x_{0})=\Lambda_{n}=\Theta\setminus D_{n}(y_{0})$. Since there are finitely many end-cone types with frontier vertices contained inside the disk of radius $n$, we have that $\Gamma$ and $\Theta$ differ from finitely many end-cones up to end-isomorphism. Hence, $\Theta$ is also context-free. Therefore, by Proposition~\ref{theo: virtually subgroups} we have that $H=\mathcal{G}(\Theta)$ is virtually a finitely generated subgroup of the direct product of free groups. Now, by Lemma~\ref{lem: infinite orbits} we have $\mathcal{O}_{n}=\mathcal{L}(\Theta)$, hence since  $\mathcal{L}(\Gamma)\subseteq \mathcal{O}_{n}=\mathcal{L}(\Theta)$,
there is a well-defined epimorphism $\psi: \mathcal{G}(\Gamma)\rightarrow \mathcal{G}(\Theta)$, with $\ker(\psi)=\pi(\mathcal{O}_{n})=\mathrsfs{B}_{n}$ ($\pi$ is the natural map from $A^{*}$ to the group $\mathcal{G}(\Gamma)$). From which we have the short exact sequence in the statement and the fact that $H\simeq G/_{\mathrsfs{B}_{n}}$. 
Moreover, by Corollary~\ref{cor: torsion perturbation}, we have that the boundary group $\mathrsfs{B}_{n}$ is a torsion subgroup of $\mathcal{G}(\Gamma)$ where the order of each element $g$ is bounded above by $o(g)\le \left |S_{n}(x_{0})\right|\,\left |g\right |$. 
\end{proof}
It is worth mentioning that in the settings of the previous proposition, the boundary group $\mathrsfs{B}_{n}$ is generated by a poly-context-free language.
This setting is going to be a key ingredient in Section~\ref{sec: non-residually} were we will address the second point of Question~\ref{prob: z infinity and unbound finite groups}.

\section{Free product of context-free graphs} \label{sec:free-product} 
The aim of this section is to introduce the operation of free product of graphs and proving that the property of being context-free is preserved under this operation. This is useful in constructing examples of context-free graphs from simpler ones. 
\begin{figure}[htp]
\includegraphics[scale=1.5]{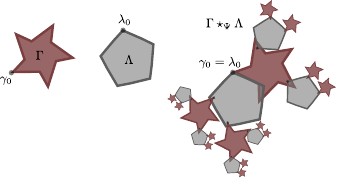}
\caption{A depiction of the free product of two rooted graphs $(\Gamma, \gamma_{0}), (\Lambda, \lambda_{0})$ with gluing maps $\psi^{1}(v)=\lambda_{0}$ for all $v\in\Gamma\setminus\{\gamma_{0}\}$ and $\psi^{2}(v)=\gamma_{0}$ for all $v\in\Lambda\setminus\{\lambda_{0}\}$.} \label{fig: free product}
\end{figure}
\begin{defn}[Free product of graphs]\label{def: free product}
Let $\Theta_1=\{(\Gamma^{(i)}, \gamma^{i}_0), i=1,\ldots, k\}$ be a collection of $A_1$-graphs, and similarly let $\Theta_2=\{(\Lambda^{(j)}, \lambda^{(j)}_0), j=1,\ldots, m\}$ be a collection of $A_2$-graphs, with $A_1\cap A_2=\emptyset$. For each $i\in[1,k]$, $j\in [1,m]$ we consider the functions $\psi^{1}_{i}:V(\Gamma^{(i)})\setminus\{\gamma_0^{i}\}\to \{\lambda^{1}_0,\ldots, \lambda^{m}_0\}$ and $\psi^{2}_{j}:V(\Lambda^{(j)})\setminus\{\lambda_0^{j}\}\to \{\gamma^{1}_0,\ldots, \gamma^{k}_0\}$; the gluing map is the collection $\Psi$ of all such functions. We first consider the infinite disjoint union of copies of the graphs $\Theta_1, \Theta_2$ where the set of vertices are words alternating between vertices of the graphs $\Theta_1$ and $\Theta_2$. More precisely, we consider the $A_1\cup A_2$-graph $(\Theta_1\sqcup\Theta_2)^{\oplus}$ whose set of vertices $\mathcal{V}$ is formed by alternating sequences of vertices of $\Theta_1, \Theta_2$ which are either of the form $\gamma^{i_1}_{s_1}\lambda^{j_1}_{t_1}\gamma^{i_2}_{s_2}\lambda^{j_2}_{t_2}\ldots \gamma^{i_\ell}_{s_\ell}\lambda^{j_\ell}_{t_\ell}, \mbox{ or } \lambda^{j_1}_{t_1}\gamma^{i_1}_{s_1}\lambda^{j_2}_{t_2}\gamma^{i_2}_{s_2}\ldots \lambda^{j_\ell}_{t_\ell}\gamma^{i_\ell}_{s_\ell}$, with $\gamma^{i_n}_{s_n}\in V(\Gamma^{(i_n)}), \lambda^{j_n}_{t_n}\in V(\Lambda^{(j_n)})$, $n\in [1,\ell]$ and $\ell \in \mathbb{N}$. The edges are of the form 
\[
\begin{cases}
\gamma^{i_1}_{s_1}\lambda^{j_1}_{t_1}\gamma^{i_2}_{s_2}\lambda^{j_2}_{t_2}\ldots \gamma^{i_\ell}_{s_\ell}\lambda^{j_\ell}_{t_\ell}\mapright{a}\gamma^{i_1}_{s_1}\lambda^{j_1}_{t_1}\gamma^{i_2}_{s_2}\lambda^{j_2}_{t_2}\ldots \gamma^{i_\ell}_{s_\ell}\lambda^{j_\ell}_{t_h}, a\in A_2, \mbox{ whenever }\lambda^{j_\ell}_{t_\ell}\mapright{a}\lambda^{j_{\ell}}_{t_h}\in E(\Lambda^{(j_{\ell})})\\
 \lambda^{j_1}_{t_1}\gamma^{i_1}_{s_1}\lambda^{j_2}_{t_2}\gamma^{i_2}_{s_2}\ldots \lambda^{j_\ell}_{t_\ell}\gamma^{i_\ell}_{s_\ell} \mapright{a}  \lambda^{j_1}_{t_1}\gamma^{i_1}_{s_1}\lambda^{j_2}_{t_2}\gamma^{i_2}_{s_2}\ldots \lambda^{j_\ell}_{t_\ell}\gamma^{i_\ell}_{s_h}, a\in A_1, \mbox{ whenever } \gamma^{j_\ell}_{s_\ell}\mapright{a}\gamma^{j_{\ell}}_{s_h}\in E(\Gamma^{(j_{\ell})})
\end{cases}
\]
Roughly speaking we are taking infinitely many copies of the graphs $\Gamma^{(i)},\Lambda^{(j)}$ where we simply rename each vertex by adding a prefix that is a word that is alternating in the sense explained above. 
Now, the free product $\Theta_1 \star_{\Psi}\Theta_2$ is the $A_1\cup A_2$-graph obtained from $(\Theta_1\sqcup\Theta_2)^{\oplus}$ by identifying the vertices of $\mathcal{V}$ via the transitive closure of the following relation:
\[
\gamma_0^{1}v^*=\lambda_0^{1}v^*, \;v^*\gamma_i^{j}u^*=v^*\psi_j^{1}(\gamma_i^{j})u^*, \; v^*\lambda_i^{j}u^*=v^*\psi_j^{2}(\lambda_i^{j})u^*
\]
where $u^{*}, v^{*}$ are generic alternating words on $\Theta_{1}, \Theta_{2}$ possibly empty. Henceforth, we will consider the rooted graph $(\Theta_1 \star_{\Psi}\Theta_2, \gamma_0^1)$ where $ \gamma_0^1= \lambda_0^{1}$.
\end{defn}
Roughly speaking the rooted graph $(\Theta_1 \star_{\Psi}\Theta_2, \gamma_0^1)$ is obtained from the graphs $\Gamma^{(1)}\sqcup \Lambda^{(1)}$ by first identifying $\gamma_0^1=\lambda_0^1$, and then by iteratively gluing at a vertex with representative $v^*\gamma^{j}_i$, with $i\neq 0$ (not a root), the graph $(\Lambda^{(s)}, \lambda^s_0)$ with $\lambda^s_0=\psi^1_j(\gamma^{j}_i)$, so that the two vertices $v^{*}\gamma_{i}^{j},v^{*}\lambda_{0}^{s}$ identify. In case the vertex ends with a vertex from the $\Lambda$ graphs such as $v^*\lambda^{j}_i$, then we glue the graph $(\Gamma^{(s)},\gamma_{0}^{s})$ by making the identification $\psi^{2}_{j}(\lambda^{j}_i)=\gamma_{0}^{s}$. Let us call a vertex $v^*\gamma^{j}_i$, where the last vertex belongs to some graph of the collection $\Theta_{1}$, a vertex colored by $1$, while if it ends by a vertex from the collection $\Theta_{2}$ we say that it is colored by $2$. To have an idea of what kind of graph we may generate see Fig.~\ref{fig: free product} in which the free product of two rooted graphs $(\Gamma, \gamma_{0}), (\Lambda, \lambda_{0})$ is depicted, where the gluing maps are $\psi^{1}(v)=\lambda_{0}$ for all $v\in\Gamma\setminus\{\gamma_{0}\}$ and $\psi^{2}(v)=\gamma_{0}$ for all $v\in\Lambda\setminus\{\lambda_{0}\}$. We will see later that if $\Gamma,\Lambda$ are two Cayley graphs of two groups $G_{1}, G_{2}$, then $(\Gamma, \gamma_{0})\star_{\Psi}(\Lambda, \lambda_{0})$ is the Cayley graph of the free product $G_{1}\star G_{2}$, hence the name.
The following proposition is an easy consequence of the definition and the condition $A_{1}\cap A_{2}=\emptyset$.
\begin{prop}\label{prop: free product and completeness}
If $\Theta_{1}$ ($\Theta_{2}$) is a family of deterministic (complete) $A_{1}$-graphs ($A_{2}$-graphs), then $\Theta_1 \star_{\Psi}\Theta_2$ is a deterministic (complete) $A_{1}\cup A_{2}$-graph. In particular, if $\Theta_{1}$ ($\Theta_{2}$) is a family of $A_{1}$-graphs ($A_{2}$-graphs), then the rooted graphs $(\Theta_1 \star_{\Psi}\Theta_2, \gamma_0^1)$ is a $A_{1}\cup A_{2}$-graph.
\end{prop}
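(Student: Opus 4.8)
The plan is to reduce every assertion to the corresponding property of the component graphs $\Gamma^{(i)},\Lambda^{(j)}$, exploiting the hypothesis $A_1\cap A_2=\emptyset$. The first step is to record the local structure of $\Theta_1\star_\Psi\Theta_2$ implied by the gluing in Definition~\ref{def: free product}: after passing to the quotient, the graph is a tree-like amalgam of isomorphic copies of the $\Gamma^{(i)}$'s and $\Lambda^{(j)}$'s in which the root of every newly glued copy is identified with a non-root vertex of its parent copy, and a copy of the opposite color is attached (via $\psi^1$ or $\psi^2$) at every non-root vertex. I would first isolate a \emph{Structure Lemma}: every vertex $p$ of $\Theta_1\star_\Psi\Theta_2$ lies in exactly one copy $C_1(p)$ of a $\Theta_1$-graph and exactly one copy $C_2(p)$ of a $\Theta_2$-graph. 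Concretely, if $p$ is represented by a non-root $\gamma$-vertex then $C_1(p)$ is its home $\Gamma$-copy (where $p$ is a non-root vertex) and $C_2(p)$ is the unique $\Lambda$-copy glued at $p$ through $\psi^1$ (where $p$ is the root); symmetrically for a non-root $\lambda$-vertex; and for the base point $o=\gamma_0^1=\lambda_0^1$ the two copies are the base copies of $\Gamma^{(1)}$ and $\Lambda^{(1)}$, with $o$ serving as the root of each.

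Granting this, the edges incident to $p$ split cleanly by label: by the edge rules of Definition~\ref{def: free product}, the $A_1$-labeled edges at $p$ are precisely the images of the edges of $C_1(p)$ at the corresponding vertex, while the $A_2$-labeled edges are precisely the images of the edges of $C_2(p)$; since $C_2(p)$ is an $A_2$-graph and $C_1(p)$ an $A_1$-graph, and $A_1\cap A_2=\emptyset$, the two families never compete for a single label. Determinism is then immediate: fix $p$ and $a\in A_1\cup A_2$; if $a\in A_1$ every edge $p\mapright{a}p'$ comes from $C_1(p)$, which is deterministic, so there is at most one, and symmetrically for $a\in A_2$ via $C_2(p)$. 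Completeness is dual: if each component is complete then $C_1(p)$ supplies an $a$-edge for every $a\in A_1$ and $C_2(p)$ supplies one for every $a\in A_2$, so every label is realized at $p$.

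For the \textquotedblleft in particular\textquotedblright{} clause I would use that an inverse graph is, by definition, an involutive, deterministic and connected digraph, so it suffices to verify these three properties. Determinism is the case just treated. Involutivity is inherited componentwise: the involution $e\mapsto e^{-1}$ of each $\Gamma^{(i)},\Lambda^{(j)}$ maps an $A_1$- (resp.\ $A_2$-) edge to an $A_1$- (resp.\ $A_2$-) edge lying in the \emph{same} copy, so assembling these involutions yields a label-compatible involution on $\Theta_1\star_\Psi\Theta_2$ still satisfying $e\neq e^{-1}$. Connectedness follows from the amalgam structure: each copy is connected (being inverse), consecutive copies share their gluing vertex, and the copies are organized along a tree rooted at $o$, so an induction on the number of copies separating two vertices produces a walk between them, $o$ being reachable from the base copies.

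The step I expect to be the main obstacle is the Structure Lemma, i.e.\ making the informal \textquotedblleft tree of glued copies\textquotedblright{} precise and proving that the transitive closure of the relations in Definition~\ref{def: free product} produces no unintended identifications. One must show that a reduced alternating representative of a vertex is unique and that two representatives are identified only when they name the same position in the same copy; the crucial inputs are that the gluing maps $\psi^1_i,\psi^2_j$ attach copies solely along their roots and that $A_1\cap A_2=\emptyset$, which together prevent a vertex from being a non-root vertex of two copies of the same color and keep the $A_1$- and $A_2$-edge sets from interfering. Once this bookkeeping is settled, all remaining verifications are the routine componentwise arguments sketched above.
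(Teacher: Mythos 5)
Your proposal is correct and follows essentially the same route as the paper: the paper gives no written proof at all, stating only that the proposition ``is an easy consequence of the definition and the condition $A_{1}\cap A_{2}=\emptyset$,'' and your componentwise argument --- each vertex lies in exactly one copy of a $\Theta_{1}$-graph and one copy of a $\Theta_{2}$-graph, so the $A_{1}$- and $A_{2}$-labeled edges at any vertex come from disjoint copies and never compete --- is precisely the fleshed-out version of that intended argument. Your identification of the Structure Lemma (no unintended identifications in the transitive closure of the gluing relations) as the only genuinely technical point is also consistent with what the paper tacitly assumes when it describes $\Theta_1\star_{\Psi}\Theta_2$ as a tree-like amalgam of copies glued along roots.
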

We now make some geometric considerations on this construction. For simplicity of notation let $\alpha=\gamma_{0}^{1}=\lambda_{0}^{1}$ and put $\Omega= \Theta_1 \star_{\Psi}\Theta_2$. Now, consider a geodesic $\alpha\mapright{w}v^{*}\gamma_{i}^{j}$ connecting $\alpha$ to some vertex $v^{*}\gamma_{i}^{j}$ of $(\Omega, \alpha)$, colored by $1$ with $i\neq 0$. Since $i\neq 0$, $w$ ends with a word colored by $1$, thus it may be uniquely factor as an alternating product $w=z_{1}x_{1}z_{2}x_{2}\ldots z_{k}x_{k}$ with $z_{t}\in A_{2}$ and $x_{t}\in A_{1}$. Let $\Lambda^{(s)}$ be the graph of $\Theta_{2}$ with $\lambda_{0}^{s}=\psi^{1}_{j}(\gamma_{i}^{j})$ so that in $\Omega$, the vertex $v^{*}\gamma_{i}^{j}$ is identified with $v^{*}\lambda_{0}^{s}$. Note that we are gluing at $v^{*}\gamma_{i}^{j}$ the rooted $A_{1}\cup A_{2}$-graph $\Omega^{+}(v^{*}\gamma_{i}^{j})$ obtained from $\Lambda^{(s)}$ in which we iteratively glue all the graphs from the collection $\Theta_{1}, \Theta_{2}$ according to the gluing maps $\Psi$. More formally, this subgraph is isomorphic to the $A_{1}\cup A_{2}$-graph $\Lambda^{(s)}(\uparrow_{\Psi})$ which is the connected component containing $\lambda_{0}^{s}$ of the graph obtained from $(\Theta_1\sqcup\Theta_2)^{\oplus}$ by identifying the vertices of $\mathcal{V}$ via the equivalence relation generated by the relation
\[
 v^*\gamma_i^{j}u^*=v^*\psi_j^{1}(\gamma_i^{j})u^*, \; v^*\lambda_i^{j}u^*=v^*\psi_j^{2}(\lambda_i^{j})u^*
\]
Note that $\Lambda^{(s)}(\uparrow_{\Psi})\simeq \Omega^{+}(v^{*}\gamma_{i}^{j})$ via the map on the set of vertices defined by $\phi(\lambda_{0}^{s})=v^{*}\gamma_{i}^{j}=v^{*}\lambda_{0}^{s}$ and $\phi(\lambda_{0}^{s}u^{*})=v^{*}\lambda_{0}^{s}u^{*}$. The graph $\Omega^{+}(v^{*}\gamma_{i}^{j})$ is called the \emph{out-cone} of $\Omega$ at $v^{*}\gamma_{i}^{j}$. Moreover, since we are gluing a copy of $\Lambda^{(s)}(\uparrow_{\Psi})$ by identifying the vertex $\lambda_{0}^{s}$ with $v^{*}\gamma_{i}^{j}$, this means that all the geodesics $\alpha\mapright{w}v^{*}\gamma_{i}^{j}\mapright{w'}v^{*}\gamma_{i}^{j}u^{*}$ of $\Omega$
passing through $v^{*}\gamma_{i}^{j}$ with $w'=u_{1}u_{2}$, for some $u_{1}\in A_{2}^{+}$, have the property that $v^{*}\gamma_{i}^{j}u^{*}$ belongs to the out-cone $\Omega^{+}(v^{*}\gamma_{i}^{j})$, see Fig.~\ref{fig: outcone} for a graphical representation. We record this remark in the following lemma.
\begin{lemma}\label{lem: out-cones and geodesics}
Put $(\Omega, \alpha)=(\Theta_1 \star_{\Psi}\Theta_2, \gamma_0^1)$, then for any $\gamma_{i}^{j}, \lambda_{i}^{j}$ with $i\neq 0$ we have:
\begin{align*}
V(\Omega^{+}(v^{*}\gamma_{i}^{j}))=\{v^{*}\gamma_{i}^{j}\}\cup \{v^{*}\gamma_{i}^{j}u^{*}: \mbox{ there is a geodesic }\alpha\mapright{w}v^{*}\gamma_{i}^{j}\mapright{w'}v^{*}\gamma_{i}^{j}u^{*}\mbox{ with }w'\in A_{2}^{+}(A_{1}\cup A_{2})^{*} \}\\
V(\Omega^{+}(v^{*}\lambda_{i}^{j}))=\{v^{*}\lambda_{i}^{j}\}\cup\{v^{*}\lambda_{i}^{j}u^{*}: \mbox{ there is a geodesic }\alpha\mapright{w}v^{*}\lambda_{i}^{j}\mapright{w'}v^{*}\lambda_{i}^{j}u^{*}\mbox{ with }w'\in A_{1}^{+}(A_{1}\cup A_{2})^{*} \}
\end{align*}
Moreover, for any vertex $v^{*}\gamma_{i}^{j}=v^{*}\lambda_{0}^{s}$ colored by $1$, or $v^{*}\lambda_{i}^{j}=v^{*}\gamma_{0}^{s}$ colored by $2$, we have:
\[
\Omega^{+}(v^{*}\gamma_{i}^{j})\simeq \Lambda^{(s)}(\uparrow_{\Psi}), \quad \Omega^{+}(v^{*}\lambda_{i}^{j})\simeq \Gamma^{(h)}(\uparrow_{\Psi}).
\]
\end{lemma}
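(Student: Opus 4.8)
The plan is to treat the two assertions of the lemma separately: the two isomorphisms, which are essentially built into the definition of the out-cone, and the geodesic description of the vertex sets, which carries the real content. For the isomorphisms, I would simply verify that the vertex map $\phi$ already exhibited in the paragraph preceding the lemma, sending $\lambda_0^s u^{*}\mapsto v^{*}\lambda_0^s u^{*}=v^{*}\gamma_i^j u^{*}$, is a well-defined isomorphism of $A_1\cup A_2$-graphs. The point to check is that prepending the fixed alternating prefix $v^{*}$ is compatible with both equivalence relations involved — the ``upward'' relations defining $\Lambda^{(s)}(\uparrow_{\Psi})$ and the identifications of Definition~\ref{def: free product} defining $\Omega$ — and that it carries edges to edges preserving labels and the involution. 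Since both relation sets act only on the suffix following $v^{*}$, and the junction at $v^{*}\gamma_i^j=v^{*}\lambda_0^s$ is already realised in $\Omega$, this is routine, and the same argument gives $\Omega^{+}(v^{*}\lambda_i^j)\simeq \Gamma^{(h)}(\uparrow_{\Psi})$ after exchanging the roles of $\Theta_1,\Theta_2$.

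The heart of the proof is a single structural fact, the \emph{cut-vertex property}: for $i\neq 0$ the vertex $c:=v^{*}\gamma_i^j=v^{*}\lambda_0^s$ is a cut vertex of $\Omega$ whose removal separates $V(\Omega^{+}(c))\setminus\{c\}$ from the root $\alpha$. This expresses the tree-of-copies nature of the free product: the copy $\Lambda^{(s)}(\uparrow_{\Psi})$ glued at $c$ meets the rest of $\Omega$ only in $c$, since no generating identification of Definition~\ref{def: free product} links a vertex $v^{*}\gamma_i^j u^{*}$ with $u^{*}$ nonempty to a vertex that does not share the prefix ending at $c$. Granting this, I would prove the inclusion $\subseteq$ as follows. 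A vertex of $V(\Omega^{+}(c))$ is either $c$ itself — accounted for by the term $\{v^{*}\gamma_i^j\}$ — or of the form $c\,u^{*}$ with $u^{*}$ nonempty; any geodesic from $\alpha$ to such a $c\,u^{*}$ must cross $c$, hence factors as $\alpha\mapright{w}c\mapright{w'}c\,u^{*}$ with both $w$ and $w'$ geodesic. Because $c\,u^{*}\neq c$ lies strictly inside $\Omega^{+}(c)$, the first edge of $w'$ cannot be an $A_1$-edge: the $A_1$-edges at $c$ belong to the $\Gamma^{(j)}$-copy below $c$ and lead to siblings of $c$ outside the out-cone, so a geodesic starting with one and later reaching $c\,u^{*}$ would have to return to $c$, contradicting that geodesics repeat no vertex. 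Hence $w'\in A_2^{+}(A_1\cup A_2)^{*}$.

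For the reverse inclusion $\supseteq$, suppose there is a geodesic $\alpha\mapright{w}c\mapright{w'}c\,u^{*}$ with $w'$ beginning in an $A_2$-letter. At $c=v^{*}\lambda_0^s$ every $A_2$-edge leads into the glued $\Lambda^{(s)}$-copy, because $\Gamma^{(j)}$ is an $A_1$-graph and so all $A_2$-edges at $c$ come from the unique $\Lambda$-copy attached there; thus the first step of $w'$ enters $\Omega^{+}(c)$. Since $w'$ is geodesic it never returns to the cut vertex $c$, so by the cut-vertex property the whole walk $w'$ stays inside $\Omega^{+}(c)$ and its endpoint $c\,u^{*}$ lies in the out-cone. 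Together with $\subseteq$ this establishes the stated equality for $\gamma$-coloured vertices, and the $\lambda$-coloured case follows verbatim after swapping $A_1\leftrightarrow A_2$.

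The step I expect to be the main obstacle is a clean proof of the cut-vertex property, namely that the only identifications produced by the relations of Definition~\ref{def: free product} which touch the out-cone are interior to it. I would make this precise by introducing a canonical alternating normal form for vertices of $\Omega$ — absorbing each junction root into the preceding non-root letter — and showing that the normal form of a vertex of $\Omega^{+}(c)$ is exactly $v^{*}$ followed by the normal form of the corresponding vertex of $\Lambda^{(s)}(\uparrow_{\Psi})$; a single application of any relation then alters the normal form only within one copy, so removing $c$ disconnects the out-cone from $\alpha$. Once this is in place, the geodesic factorisation and the ``no $A_1$ first step'' observation follow from the determinism of the inverse graph $\Omega$ guaranteed by Proposition~\ref{prop: free product and completeness} together with the fact that a geodesic repeats no vertex.
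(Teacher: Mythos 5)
Your proof is correct and takes essentially the same approach as the paper: the paper states this lemma with no formal proof, merely ``recording'' the preceding discussion in which the copy of $\Lambda^{(s)}(\uparrow_{\Psi})$ glued at $v^{*}\gamma_{i}^{j}$ meets the rest of $\Omega$ only in that vertex, and your cut-vertex property (together with the normal-form analysis you sketch to justify it) is precisely the rigorous form of that observation. The isomorphism $\Omega^{+}(v^{*}\gamma_{i}^{j})\simeq\Lambda^{(s)}(\uparrow_{\Psi})$ via the map $\phi$ is likewise the one the paper exhibits, so your argument supplies the missing details of the paper's remark rather than a different route.
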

\begin{figure}[ht]
\includegraphics[scale=2]{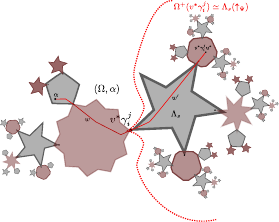}
\caption{In the dashed region the out-cone $\Omega^{+}(v^{*}\gamma_{i}^{j})$, in red the geodesic $\alpha\mapright{w}v^{*}\gamma_{i}^{j}\mapright{w'}v^{*}\gamma_{i}^{j}u^{*}$.} \label{fig: outcone}
\end{figure}
We now describe how to obtain the end-cones of $\Omega$ from the end-cones of the graphs in $\Theta_{1}\cup \Theta_{2}$. We have the following lemma which, for simplicity, we state for a generic vertex colored by $2$ but it clearly holds for any vertex of $\Omega$. 
\begin{lemma}\label{lem: end-cones of free product}
With the above notation, for a given vertex $v^{*}\gamma_{i}^{j}\lambda^{s}_{t}$ of $\Omega$ colored by $2$, the end-cone $\Omega(v^{*}\gamma_{i}^{j}\lambda^{s}_{t},\alpha)$ is isomorphic to the $A_{1}\cup A_{2}$-graph obtained by taking the disjoint union
\[
\Lambda^{(s)}(\lambda^{s}_{t}, \lambda^{s}_{0})\cup\biguplus_{\substack{z\in V(\Lambda^{(s)}(\lambda^{s}_{t}, \lambda^{s}_{0}))\\ \gamma^{h}_{0}=\psi^{2}_{s}(z)}} \Gamma^{(h)}(\uparrow_{\Psi})
\]
and then by identifying each vertex $z\in V(\Lambda^{(s)}(\lambda^{s}_{t}, \lambda^{s}_{0}))$ with $\gamma^{h}_{0}=\psi^{2}_{s}(z)$.
\end{lemma}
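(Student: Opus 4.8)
The plan is to reduce the computation of the end-cone to a computation inside a single out-cone, and then to describe the end-cone of that out-cone directly. Throughout, write $u = v^{*}\gamma_{i}^{j} = v^{*}\lambda_{0}^{s}$ for the cut vertex preceding the target, and $x = v^{*}\gamma_{i}^{j}\lambda_{t}^{s} = u\lambda_{t}^{s}$, with $n = \|x\|_{\alpha}$ and $m = \|u\|_{\alpha}$. First I would isolate the relevant metric input, which is the heart of the argument: because $A_{1}\cap A_{2}=\emptyset$, a walk in $\Omega$ can only alter its last syllable by using edges of the graph of the appropriate color, so any walk joining two vertices must pass through the vertices at which their alternating prefixes agree. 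Concretely, this says that $u$ is a cut vertex separating the out-cone $\Omega^{+}(u)$ from the rest of $\Omega$, that geodesics from $\alpha$ into $\Omega^{+}(u)$ factor through $u$, and that $\|uu'\|_{\alpha}=m + d_{\Omega^{+}(u)}(u,uu')$ for every vertex $uu'$ of the out-cone. All of this is exactly the content of Lemma~\ref{lem: out-cones and geodesics}, so I would invoke it rather than reprove the geometry from scratch.

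Given this, the first step is the reduction. Since $m<n$, the vertex $u$ lies in $D_{n-1}(\alpha)$ and is removed when forming the end-cone, and being a cut vertex it disconnects $\Omega^{+}(u)\setminus\{u\}$ from everything else. Hence the connected component of $\Omega\setminus D_{n-1}(\alpha)$ containing $x$ lies entirely in $\Omega^{+}(u)$, and by the additive distance formula, intersecting $\Omega^{+}(u)$ with $D_{n-1}(\alpha)$ amounts to deleting the ball of radius $k-1$ about $u$ inside $\Omega^{+}(u)$, where $k=n-m=d_{\Omega^{+}(u)}(u,x)$. Under the isomorphism $\Omega^{+}(u)\simeq\Lambda^{(s)}(\uparrow_{\Psi})$ of Lemma~\ref{lem: out-cones and geodesics} (sending $u\mapsto\lambda_{0}^{s}$ and $x\mapsto\lambda_{t}^{s}$) this gives
\[
\Omega(x,\alpha)\simeq \Lambda^{(s)}(\uparrow_{\Psi})(\lambda_{t}^{s},\lambda_{0}^{s}),
\]
the end-cone of $\Lambda^{(s)}(\uparrow_{\Psi})$ at $\lambda_{t}^{s}$ rooted at $\lambda_{0}^{s}$.

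The second step is to describe this last end-cone. Applying the same cut-vertex and additivity principle inside $\Lambda^{(s)}(\uparrow_{\Psi})$, the base copy of $\Lambda^{(s)}$ is isometrically and convexly embedded, since the glued out-cones $\Gamma^{(h)}(\uparrow_{\Psi})$ only increase distance from $\lambda_{0}^{s}$; thus the base vertices surviving in the end-cone are exactly those of $\Lambda^{(s)}(\lambda_{t}^{s},\lambda_{0}^{s})$. For each such surviving vertex $z$ with $\psi_{s}^{2}(z)=\gamma_{0}^{h}$, the out-cone glued at $z$ meets the rest only at $z$, all its vertices are farther from $\lambda_{0}^{s}$ than $z$, and it is identified with $\Gamma^{(h)}(\uparrow_{\Psi})$ by construction; hence it lies wholly in the component of $x$. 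Conversely an out-cone glued at a deleted base vertex $z'\in D_{k-1}(\lambda_{0}^{s})$ is severed from $x$ by the removal of $z'$ and does not contribute. Assembling these pieces yields precisely the disjoint union $\Lambda^{(s)}(\lambda_{t}^{s},\lambda_{0}^{s})\cup\biguplus \Gamma^{(h)}(\uparrow_{\Psi})$ with each $\Gamma^{(h)}(\uparrow_{\Psi})$ identified at $z=\gamma_{0}^{h}=\psi_{s}^{2}(z)$, as claimed.

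The step I expect to be the main obstacle is the metric bookkeeping underlying both reductions, namely verifying that distances add along the tree-of-graphs structure and that the relevant subgraphs are convexly embedded, so that deleting balls about $\alpha$ translates cleanly into deleting balls about the local roots $u$ and $\lambda_{0}^{s}$. This is forced by $A_{1}\cap A_{2}=\emptyset$ and is largely packaged in Lemma~\ref{lem: out-cones and geodesics}, but one must check carefully that no path sneaks back into a deleted region, and that out-cones hung at deleted vertices are genuinely excluded from the component of $x$ rather than merely forming separate components.
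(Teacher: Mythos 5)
Your proof is correct and takes essentially the same route as the paper's: both reduce the end-cone $\Omega(v^{*}\gamma_{i}^{j}\lambda^{s}_{t},\alpha)$ to the out-cone $\Omega^{+}(v^{*}\gamma_{i}^{j})\simeq \Lambda^{(s)}(\uparrow_{\Psi})$ via Lemma~\ref{lem: out-cones and geodesics}, and then identify the surviving base vertices with $\Lambda^{(s)}(\lambda^{s}_{t},\lambda^{s}_{0})$ and reattach the out-cones $\Gamma^{(h)}(\uparrow_{\Psi})$ at those vertices. Your write-up is in fact more explicit about the metric bookkeeping (distance additivity, convexity of the base copy, exclusion of out-cones hung at deleted vertices) that the paper's brief proof leaves implicit.
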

\begin{proof}
Set $n=\|v^{*}\gamma_{i}^{j}\lambda^{s}_{t}\|_{\alpha}$.
Then, by Lemma~\ref{lem: out-cones and geodesics} we may conclude that the end-cone $\Omega(v^{*}\gamma_{i}^{j}\lambda^{s}_{t},\alpha)$ is contained in the out-cone $\Omega^{+}(v^{*}\gamma_{i}^{j})\simeq \Lambda^{(s)}(\uparrow_{\Psi})$. Moreover, the frontier vertices of $\Omega(v^{*}\gamma_{i}^{j}\lambda^{s}_{t},\alpha)$ are in one to one correspondence with the frontier vertices of $\Lambda^{(s)}(\lambda_{t}^{s}, \lambda_{0}^{s})$. Therefore, $\Omega(v^{*}\gamma_{i}^{j}\lambda^{s}_{t},\alpha)$ may be obtained from $\Lambda^{(s)}(\lambda_{t}^{s}, \lambda_{0}^{s})$ by gluing at each vertex $v$ of $\Lambda^{(s)}(\lambda_{t}^{s}, \lambda_{0}^{s})$ the corresponding out-cone $\Gamma^{(h)}(\uparrow_{\Psi})$ by identifying $z$ with $\gamma_{0}^{h}=\psi^{2}_{s}(z)$. See Figure~\ref{fig: end-cones} for a graphical representation on how to obtain $\Omega(v^{*}\gamma_{i}^{j}\lambda^{s}_{t},\alpha)$ from $\Lambda^{(s)}(\lambda_{t}^{s}, \lambda_{0}^{s})$. 
\end{proof}
\begin{figure}[ht]
\includegraphics[scale=1.3]{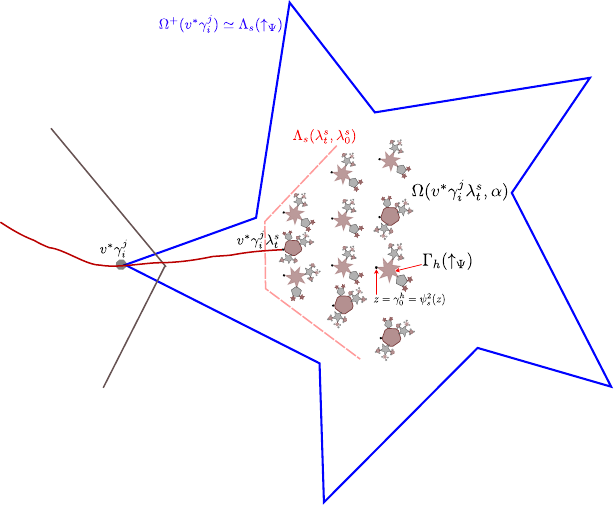}
\caption{A depiction of the end-cone $\Omega(v^{*}\gamma_{i}^{j}\lambda^{s}_{t},\alpha)$ which is obtained by gluing at each vertex $z$ of the end-cone $\Lambda^{(s)}(\lambda^{s}_{t}, \lambda^{s}_{0})$ of $\Lambda^{(s)}$ (in dashed red lines) the out-cone $\Gamma^{(h)}(\uparrow_{\Psi})$ according to the value of gluing map $\gamma_{0}^{h}=\psi_{s}^{2}(z)$.} \label{fig: end-cones}
\end{figure}

\subsection{Context-free graphs and free product}
In this section we prove that the free product of directed graphs preserves the property of being context-free. For this purpose, we need to slightly adapt the notion of isomorphism and end-cone of an $A_{1}$-graph to include the notion of vertex coloring. A \emph{vertex-coloring of a graph} $\Gamma$ is just a map $\chi:V(\Gamma)\to C$ onto some set of colors $C$. An \emph{isomorphism} $\psi:\Gamma^{(1)}\to \Gamma^{(2)}$ between two $A$-graphs with color maps $\chi_{1}:V(\Gamma)\to C$, $\chi_{2}:V(\Gamma^{(2)})\to C$ is an isomorphism of $A$-graphs that preserves the colorings: $\chi_{1}=\chi_{2}\circ \psi$. For technical reasons that will be clear later, we call a coloring of a rooted $A$-graph $(\Gamma, \gamma_{0})$ any map $\chi:V(\Gamma)\setminus\{\gamma_{0}\}\to C$, and we use the notation $(\Gamma, \gamma_{0}; \chi)$ to stress the coloring function. For such colored rooted $A$-graph $(\Gamma, \gamma_{0}; \chi)$, we may regard an end-cone $\Gamma(v,\gamma_{0})$ as a colored $A$-graph with coloring map $\chi\mid_{V(\Gamma(v,\gamma_{0}))}$ given by the restriction of $\chi$ to the set of vertices of the end-cone $\Gamma(v,\gamma_{0})$. We therefore say that the colored $A$-graph $(\Gamma, \gamma_{0}; \chi)$ is \emph{context-free} if there are finitely many end-cones up to end-isomorphism (in the aforementioned sense). We stick to the notation introduced in Definition~\ref{def: free product}, and let us suppose that $\Theta_{1}$ ($\Theta_{2}$) is a collection of $A_{1}$-graphs ($A_{2}$-graphs) with $A_{1}\cap A_{2}=\emptyset$ and with gluing maps $\Psi$. We say that $\Gamma^{(j)}\in\Theta_{1}$ ($\Lambda^{(j)}\in\Theta_{2}$) is context-free if the colored rooted $A_{1}$-graph $(\Gamma^{(j)}, \gamma_{0}^{j}, \psi^{1}_{j})$ ($A_{2}$-graph $(\Lambda^{(j)}, \lambda_{0}^{j}, \psi^{2}_{j})$) is context-free. If all the colored $A_{i}$-graphs of $\Theta_{1}\cup \Theta_{2}$ are context-free, we say that $(\Theta_{1},\Theta_{2},\Psi)$ is a \emph{context-free ensemble}. We have the following theorem.
\begin{theorem}\label{theo: free product is context-free}
For a context-free ensemble $(\Theta_{1},\Theta_{2},\Psi)$, the free-product $\Theta_{1}\star_{\Psi}\Theta_{2}$ is a context-free $A_{1}\cup A_{2}$-graph. 
\end{theorem}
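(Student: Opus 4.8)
The plan is to verify the Muller--Schupp characterization directly: I will show that $\Omega=\Theta_1\star_\Psi\Theta_2$ has only finitely many end-cones up to end-isomorphism, using the explicit description of these end-cones supplied by Lemma~\ref{lem: end-cones of free product}. The argument rests on two finiteness inputs. First, by Lemma~\ref{lem: out-cones and geodesics} every out-cone of $\Omega$ is isomorphic, as a rooted graph, to one of $\Gamma^{(h)}(\uparrow_\Psi)$ with $h\in[1,k]$ or $\Lambda^{(s)}(\uparrow_\Psi)$ with $s\in[1,m]$; crucially, the \emph{isomorphism type of the out-cone glued at a vertex $z$ is determined by the colour value} $\psi_s^2(z)=\gamma_0^h$ (resp.\ $\psi_j^1(z)=\lambda_0^s$). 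Second, the context-free ensemble hypothesis says precisely that each colored rooted digraph $(\Gamma^{(j)},\gamma_0^j,\psi_j^1)$ and $(\Lambda^{(s)},\lambda_0^s,\psi_s^2)$ has finitely many end-cones up to colour-preserving end-isomorphism.

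First I would record the geometric fact, implicit in Lemmas~\ref{lem: out-cones and geodesics} and \ref{lem: end-cones of free product}, that an out-cone glued at a vertex $z$ of a constituent end-cone lies strictly deeper than $z$, since inside an out-cone geodesics only move away from the root $\alpha$. Consequently, in the description of $\Omega(v,\alpha)$ given by Lemma~\ref{lem: end-cones of free product}---namely $\Lambda^{(s)}(\lambda_t^s,\lambda_0^s)$ with a copy of $\Gamma^{(h)}(\uparrow_\Psi)$ glued at each vertex $z$ according to $\gamma_0^h=\psi_s^2(z)$, and symmetrically for vertices colored by $1$---the frontier vertices of $\Omega(v,\alpha)$ coincide with the frontier vertices of the constituent end-cone $\Lambda^{(s)}(\lambda_t^s,\lambda_0^s)$.

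The core step is the gluing claim: if two constituent end-cones, say $\Lambda^{(s)}(\lambda_t^s,\lambda_0^s)$ and $\Lambda^{(s')}(\lambda_{t'}^{s'},\lambda_0^{s'})$, are colour-preserving end-isomorphic via some $\phi$, then the two end-cones of $\Omega$ produced from them by Lemma~\ref{lem: end-cones of free product} are end-isomorphic. To see this I would argue that, because $\phi$ preserves the colouring, $\psi_s^2(z)=\psi_{s'}^2(\phi(z))$ for every vertex $z$, so the out-cone glued at $z$ and the one glued at $\phi(z)$ are copies of the same $\Gamma^{(h)}(\uparrow_\Psi)$ and hence isomorphic as rooted graphs; fixing such isomorphisms and amalgamating them with $\phi$ along the identified roots $z\leftrightarrow\gamma_0^h$ yields an isomorphism of the two glued $A_1\cup A_2$-digraphs. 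By the preceding paragraph this isomorphism carries frontier vertices to frontier vertices, hence is an end-isomorphism.

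Combining these, the end-isomorphism type of any end-cone of $\Omega$ is determined by the colour-preserving end-isomorphism type of the constituent end-cone from which Lemma~\ref{lem: end-cones of free product} builds it, and there are finitely many such types, ranging over the finitely many indices $s\in[1,m]$ and $j\in[1,k]$; treating vertices colored by $1$ and by $2$ symmetrically exhausts all vertices of $\Omega$. Hence $\Omega$ has finitely many end-cones up to end-isomorphism and is a context-free $A_1\cup A_2$-digraph. I expect the main obstacle to be the bookkeeping in the gluing claim: checking that the amalgamation of $\phi$ with the chosen out-cone isomorphisms is well defined (the root identifications on both sides are respected) and genuinely preserves edges and labels across every gluing seam, together with the uniform handling of the two colours.
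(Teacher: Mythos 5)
Your proposal is correct and follows essentially the same route as the paper's proof: both invoke Lemma~\ref{lem: end-cones of free product} to describe end-cones of $\Omega$ as a constituent end-cone with out-cones glued according to the colouring, then extend a colour-preserving end-isomorphism of constituent end-cones across the glued out-cones (the paper's map $\widehat{\varphi}$ is exactly your amalgamation), and finally observe that frontier vertices are preserved, so finiteness of coloured end-cone types in the ensemble yields finiteness of end-cone types in $\Omega$. The only cosmetic difference is that you allow the two constituent end-cones to come from different graphs $\Lambda^{(s)},\Lambda^{(s')}$, whereas the paper compares end-cones of the same graph; both bookkeepings give the same finite bound.
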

\begin{proof}
By Lemma~\ref{lem: end-cones of free product}, using the same notation, we have that an end-cone $\Omega(v^{*}\gamma_{i}^{j}\lambda_{t}^{s},\alpha)$ is obtained from the disjoint union 
\[
\Lambda^{s}(\lambda^{s}_{t}, \lambda^{s}_{0})\cup\biguplus_{\substack{z\in V(\Lambda^{s}(\lambda^{s}_{t}, \lambda^{s}_{0}))\\ \gamma^{h}_{0}=\psi^{2}_{s}(z)}} \Gamma^{(h)}(\uparrow_{\Psi})
\]
an then by identifying $z$ with $\psi^{2}_{s}(z)$. We prove that we have finitely many end-cones up to end-isomorphism (in the usual sense). Now, let $\Omega(v^{*}\gamma_{i}^{j}\lambda_{\ell}^{s},\alpha)$ be another end-cone for which there is an end-isomorphism $\varphi: \Lambda^{s}(\lambda^{s}_{t}, \lambda^{s}_{0})\to \Lambda^{s}(\lambda^{s}_{\ell}, \lambda^{s}_{0})$ of end-cones of the same $A_{2}$-graph $\Lambda^{(s)}$ that preserves the coloring. Since the coloring map $\psi^{2}_{s}$ is commuting with $\varphi$, for any $z\in V(\Lambda^{s}(\lambda^{s}_{t}, \lambda^{s}_{0}))$ we have that $\psi^{2}_{s}(z)=\psi^{2}_{s}(\varphi(z))=\gamma_{0}^{h}$, so both $z$ and $\varphi(z)$ have the same color, hence in both the vertex $z$ and $\varphi(z)$ we are gluing the same graph $\Gamma^{(h)}(\uparrow_{\Psi})$. Therefore, it is straightforward to show that $\varphi$ may be extended 
\[
\widehat{\varphi}:\left(\Lambda^{s}(\lambda^{s}_{t}, \lambda^{s}_{0})\cup\biguplus_{\substack{z\in V(\Lambda^{s}(\lambda^{s}_{t}, \lambda^{s}_{0}))\\ \gamma^{h}_{0}=\psi^{2}_{s}(z)}} \Gamma^{(h)}(\uparrow_{\Psi})\right)\to\left( \Lambda^{s}(\lambda^{s}_{\ell}, \lambda^{s}_{0})\cup\biguplus_{\substack{z\in V(\Lambda^{s}(\lambda^{s}_{\ell}, \lambda^{s}_{0}))\\ \gamma^{h}_{0}=\psi^{2}_{s}(z)}} \Gamma^{(h)}(\uparrow_{\Psi})\right)
\]
by locally extending $\varphi$ from each disjoint copy $\Gamma^{(h)}(\uparrow_{\Psi})$ associated to $z$, to the same graph $\Gamma^{(h)}(\uparrow_{\Psi})$ associated to $\varphi(z)$. Now, since we are performing the identification $z=\psi^{2}_{s}(z)$ in the domain of $\widehat{\varphi}$ and this is the same as identifying $\varphi(z)$ with $\psi^{2}_{s}(\varphi(z))$, we have that $\widehat{\varphi}$ induces an isomorphism $\phi: \Omega(v^{*}\gamma_{i}^{j}\lambda_{t}^{s},\alpha)\to \Omega(v^{*}\gamma_{i}^{j}\lambda_{\ell}^{s},\alpha)$. Note that the frontier vertices of $\Omega(v^{*}\gamma_{i}^{j}\lambda_{t}^{s},\alpha)$ are (up to renaming) the same as $\Lambda^{s}(\lambda^{s}_{t}, \lambda^{s}_{0}))$, and similarly for $\Omega(v^{*}\gamma_{i}^{j}\lambda_{\ell}^{s},\alpha)$. Hence, $\phi$ is mapping the frontier vertices of $\Omega(v^{*}\gamma_{i}^{j}\lambda_{t}^{s},\alpha)$ into the frontier vertices of $\Omega(v^{*}\gamma_{i}^{j}\lambda_{\ell}^{s},\alpha)$, hence they are end-isomorphic. We have just shown that the end-isomorphism classes of the end-cones of $\Omega$ of vertices colored by $2$ depends on the end-isomorphism (preserving the coloring) classes of the graphs $\{\Lambda^{(1)}, \ldots, \Lambda^{(m)}\}$ of $\Theta_{2}$, which are finitely many. Similarly, one may show the same for the end-cones of the vertices colored by $1$, and this concludes the proof of the theorem.
\end{proof}

\begin{figure}[ht]
\includegraphics[scale=.7]{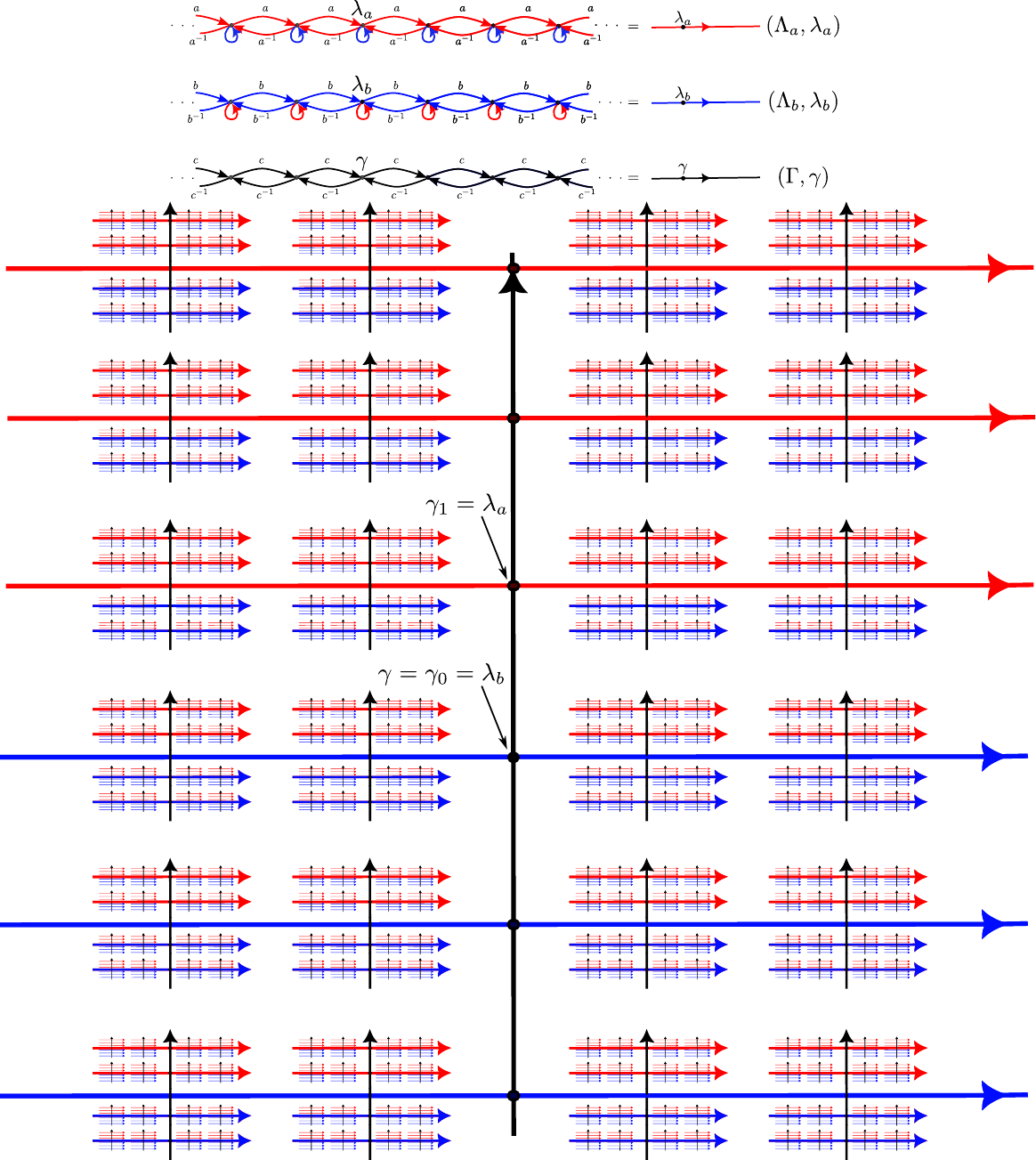}
\caption{On the upper part the graphs $\Theta_{1}, \Theta_{2}$, in the lower picture the antenna graph $(\Theta_{1}\star_{\Psi}\Theta_{2}, \gamma)$.}  \label{fig: antenna}
\end{figure}

\section{Some examples using the free product of graphs}\label{sec: non-residually}
In this section we use the free product of context-free graphs to construct some examples of interesting {\bf CF-TR} groups. In the first example, the antenna graph, we show that in general transition groups of context-free graphs are not residually-finite. In the second one, the comb graph, we prove that the associated transition group is $\mathbb{Z}\wr\mathbb{Z}$, so a non-finitely generated group. Hence, it is a solvable co-context-free group that is not poly-context-free, showing that these classes of groups are different and solving the first point of Question~\ref{prob: z infinity and unbound finite groups}. The third example addresses the second point of Question~\ref{prob: z infinity and unbound finite groups}: it provides an infinite torsion subgroup, so that the group have arbitrarily large finite subgroups.
\begin{defn}[The Antenna graph]
Let $A_{1}=\{c,c^{-1}\}$, $A_{2}=\{a,a^{-1},b,b^{-1}\}$. Consider the rooted $A_{1}$-graph $(\Gamma, \gamma)$ which is the line 
\[
\gamma_i\mapright{c}\gamma_{i+1},\; \gamma_{i+1}\mapright{c^{-1}}\gamma_i,\quad \forall i\in\mathbb{Z}, \mbox{ where the root }\gamma=\gamma_{0}
\]
and the two rooted $A_{2}$-graphs $(\Lambda_{a},\lambda_{a}), (\Lambda_{b},\lambda_{b})$ which are lines with loops defined by 
\[
\lambda_{a,i}\mapright{b}\lambda_{a,i},\; \lambda_{a,i}\mapright{b^{-1}}\lambda_{a,i},\,\lambda_{a,i}\mapright{a}\lambda_{a,i+1},\; \lambda_{a,i+1}\mapright{a^{-1}}\lambda_{a,i},  \quad \forall i\in\mathbb{Z},\,\,\lambda_{a,0}=\lambda_{a}
\]
\[
\lambda_{b,i}\mapright{a}\lambda_{b,i},\; \lambda_{b,i}\mapright{a^{-1}}\lambda_{b,i},\,\lambda_{b,i}\mapright{b}\lambda_{b,i+1},\; \lambda_{b,i+1}\mapright{b^{-1}}\lambda_{b,i},  \quad \forall i\in\mathbb{Z},\,\,\lambda_{b,0}=\lambda_{b}
\]
see the upper part of Fig.~\ref{fig: antenna}. The gluing map $\Psi$ is given by the functions defined as follows:
\[
\psi^{1}:V(\Gamma)\setminus\{\gamma\}\to \{\lambda_{a}, \lambda_{b}\},\,\,\psi^{1}(\gamma_{i})=\begin{cases}
\lambda_{b}&\mbox{ if }i\le 0  \\
\lambda_{a}& \mbox{ otherwise}
\end{cases}
\]
and
\[
\psi^{2}_{a}:V(\Lambda_{a})\setminus\{\lambda_{a}\}\to \{\gamma\}, \,\, \psi^{2}_{b}:V(\Lambda_{b})\setminus\{\lambda_{b}\}\to \{\gamma\}
\]
that are constant maps. By putting $\Theta_{1}=\{(\Gamma, \gamma)\}$ and $\Theta_{2}=\{(\Lambda_{a},\lambda_{a}), (\Lambda_{b},\lambda_{b})\}$, the rooted $A_{1}\cup A_{2}$-graph $(\mathfrak{A}, \gamma)=(\Theta_{1}\star_{\Psi}\Theta_{2}, \gamma)$ is called the antenna graph, see Fig.~\ref{fig: antenna}. Moreover, it is easy to see that $(\Theta_{1}, \Theta_{2}, \Psi)$ is a context-free-ensemble, thus by Theorem~\ref{theo: free product is context-free} $(\mathfrak{A}, \gamma)$ is a context-free graph. Note that with the convention of the free product that we have given, the root $\gamma$ of $\Theta_{1}\star_{\Psi}\Theta_{2}$ is identified with $\lambda_{b}$.
\end{defn}
By the tree structure, the set of vertices can be identified using words in $A_{1}\cup A_{2}$ by associating to a vertex the unique geodesic connecting $\gamma$ to such vertex. It is easy to see that the action of the alphabet $A_{1}\cup A_{2}$ on the set of vertices is defined as follows. Suppose we are in the vertex $v=wx^{s}$, $s\neq 0$, in case $x\in A_{1}\cup A_{2}$ we have
\begin{equation}\label{eq: action}
 \begin{aligned}
&wc^{s} \cdot b^{t}=\begin{cases}
wc^{s}, & \mbox{ if }s>0\\
wc^{s}b^{t}, & \mbox{ otherwise }
\end{cases},\,\, 
wc^{s} \cdot a^{t}=\begin{cases}
wc^{s} a^{t}, & \mbox{ if }s\ge 0\\
wc^{s}, & \mbox{ otherwise }
\end{cases}\,\,\\
& wa^{s} \cdot b^{t}=wa^{s}, \,\,
wb^{s} \cdot a^{t}=wb^{s}, \,\, wa^{s} \cdot c^{t}=wa^{s}c^{t}, \,\,
wb^{s} \cdot c^{t}=wb^{s}c^{t}, \,\,\gamma \cdot a^{s}=\gamma, \gamma\cdot b^{s}=\gamma b^{s}
\end{aligned}
\end{equation}
We now prove that $\mathcal{G}(\mathfrak{A})$ is not residually finite, but first we need the following lemma. 
\begin{lemma}\label{lem: relations antenna}
For each $k,s\in\mathbb{Z}$, $t,p\ge 0$ we have that 
$
\left[ c^{-t}a^{s}c^{t}, c^{p}b^{k}c^{-p}\right]=\mathds{1}_{\mathcal{G}(\mathfrak{A})}.
$
\end{lemma}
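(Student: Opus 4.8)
The plan is to prove the relation by showing that the two permutations $g_1 = c^{-t}a^s c^t$ and $g_2 = c^p b^k c^{-p}$ of $V(\mathfrak{A})$ have \emph{disjoint supports}; since permutations with disjoint supports commute, this immediately gives $g_1 g_2 = g_2 g_1$ in $\mathcal{G}(\mathfrak{A})$, that is $[g_1,g_2] = \mathds{1}_{\mathcal{G}(\mathfrak{A})}$. Throughout I write $\mathrm{supp}(g) = \{v \in V(\mathfrak{A}) : v\cdot g \neq v\}$ and use two standard facts about the right action: that $g$ permutes $\mathrm{supp}(g)$ (so $g$ maps its support into itself), and that conjugation shifts supports, $\mathrm{supp}(h^{-1}gh) = \mathrm{supp}(g)\cdot h$. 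Consequently it suffices to describe $\mathrm{supp}(a^s)$ and $\mathrm{supp}(b^k)$ and then translate them by $c^t$ and $c^{-p}$.

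First I would record, from the action \eqref{eq: action} and the tree structure, that every non-root vertex is an alternating geodesic word terminating in a unique maximal block $x^m$ with $x\in\{a,b,c\}$ and $m\neq 0$, and that the local action of each generator depends only on this terminal block: a positive $c$-block carries a glued $\Lambda_a$ (so $a$ moves, $b$ loops), a negative $c$-block carries a glued $\Lambda_b$ (so $b$ moves, $a$ loops), an $a$-block sits inside a $\Lambda_a$-line (so $a$ moves, $b$ loops, $c$ sprouts a new out-cone), a $b$-block behaves symmetrically, and the root behaves like $\lambda_b$ ($b$ moves, $a$ loops, $c$ runs along the central line). From this, for $s\neq 0$ the set $\mathrm{supp}(a^s)$ is exactly the collection of vertices whose terminal block is a positive $c$-power or an $a$-power, while for $k\neq 0$ the set $\mathrm{supp}(b^k)$ is the collection of vertices whose terminal block is a negative $c$-power or a $b$-power, together with the root $\gamma$.

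Translating by $c^{\pm}$ then finishes the argument. Since $v\cdot c^{t}$ lengthens a terminal positive $c$-block and turns a terminal $a$-block into a fresh terminal positive $c$-block (when $t>0$), every vertex of $\mathrm{supp}(g_1)=\mathrm{supp}(a^s)\cdot c^t$ terminates in a positive $c$-block, or, when $t=0$, in a positive $c$-block or an $a$-power. Symmetrically every vertex of $\mathrm{supp}(g_2)=\mathrm{supp}(b^k)\cdot c^{-p}$ terminates in a negative $c$-block, or, when $p=0$, in a negative $c$-block or a $b$-power, or is $\gamma$. Comparing terminal-block types shows these two sets are disjoint for all $t,p\ge 0$: a positive $c$-block is neither a negative $c$-block nor a $b$-power, an $a$-power is likewise neither, and no $g_1$-support vertex is the empty root word. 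Disjointness of supports, via the two facts above, yields the commutation. The point I expect to be delicate — and would write out with care — is the terminal-block bookkeeping inside the self-similar out-cone structure (specifically, that applying $c^{\pm p}$ to a vertex in an $a$- or $b$-antenna genuinely sprouts a new $c$-line rather than re-entering the central line), together with the separate handling of the root $\gamma$, which is the unique vertex carrying no well-defined terminal block.
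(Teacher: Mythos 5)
Your proposal is correct, but it proves the lemma by a genuinely different route than the paper. The paper first conjugates by $c^{t}$ to reduce everything to the single-parameter statement $[a^{s},c^{\alpha}b^{k}c^{-\alpha}]=\mathds{1}_{\mathcal{G}(\mathfrak{A})}$ with $\alpha=t+p$, and then checks that $v\cdot g=v$ for the full commutator word vertex by vertex: a direct computation from the action formula \eqref{eq: action}, organized as a case analysis on the terminal block of $v$ (positive $c$-block; negative $c$-block with subcases $m+\alpha>0$, $m+\alpha<0$, $m+\alpha=0$, the last splitting further; then $b$-blocks and $a$-blocks, the latter with the subcase $m+s=0$). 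You instead show that the two conjugated elements have \emph{disjoint supports} and invoke the standard facts that conjugation translates supports under a right action and that permutations with disjoint supports commute; since $\mathcal{G}(\mathfrak{A})$ is by definition a group of permutations of $V(\mathfrak{A})$, disjointness immediately yields the relation. The support descriptions of $a^{s}$ and $b^{k}$, and the terminal-block bookkeeping you flag as delicate, are precisely the content of \eqref{eq: action} (in particular $wa^{m}\cdot c^{t}=wa^{m}c^{t}$ and $wb^{m}\cdot c^{-p}=wb^{m}c^{-p}$, so a $c$-power applied inside an antenna does sprout a fresh $c$-line rather than re-entering the central one), so nothing beyond what the paper already records is needed to complete your sketch. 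What your approach buys is economy and an explanation of \emph{why} the relation holds --- $c^{-t}a^{s}c^{t}$ only moves vertices whose terminal block is a positive $c$-power (or an $a$-power when $t=0$), while $c^{p}b^{k}c^{-p}$ only moves vertices of the complementary terminal types --- thereby eliminating all the nested subcases. What the paper's brute-force verification buys is self-containment: it never needs the support formalism and exhibits the fixed-point computations explicitly, in the same style it reuses for the later examples in that section. Both arguments ultimately rest on the same foundation, namely the tree structure of $\mathfrak{A}$ and formula \eqref{eq: action}.
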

\begin{proof}
Note that the statement is equivalent to $g=\left[ a^{s}, c^{\alpha}b^{k}c^{-\alpha}\right]=a^{s}c^{\alpha}b^{k}c^{-\alpha}a^{-s}c^{\alpha}b^{-k}c^{-\alpha}=\mathds{1}_{\mathcal{G}(\mathfrak{A})}$ for every $\alpha\ge 0$ and $k,s\neq 0$. 
We may also assume that $\alpha>0$ since the relation $\left[ a,b\right]=\mathds{1}_{\mathcal{G}(\mathfrak{A})}$ can be easily verified.\

Note that $\gamma\cdot g=\gamma$. We show that for any vertex $v=wx^{m}$, $m\neq 0$ $x\in A_{1}\cup A_{2} $ we have $v\cdot g=v$ from which the statement follows. We have the following cases:
\begin{itemize}
\item $m>0$, $x=c$, using (\ref{eq: action}) we have 
\begin{align*}
wc^{m}\cdot (a^{s}c^{\alpha}b^{k}c^{-\alpha}a^{-s}c^{\alpha}b^{-k}c^{-\alpha})&=wc^m a^s c^{\alpha}\cdot (b^{k}c^{-\alpha}a^{-s}c^{\alpha}b^{-k}c^{-\alpha})=wc^m a^s c^{\alpha}\cdot (c^{-\alpha} a^{-s}c^{\alpha}b^{-k}c^{-\alpha})\\
&=wc^{m}\cdot (c^{\alpha}b^{-k}c^{-\alpha})=wc^{m},
\end{align*}
\item $m<0$, $x=c$, using (\ref{eq: action}) we have $wc^{m}\cdot g=wc^{{m+\alpha}}\cdot (b^{k}c^{-\alpha}a^{-s}c^{\alpha}b^{-k}c^{-\alpha})$. Now we have the following cases:
\begin{itemize}
\item if $m+\alpha>0$ we have $wc^{m+\alpha}\cdot (b^{k}c^{-\alpha}a^{-s}c^{\alpha}b^{-k}c^{-\alpha})=wc^{m}\cdot (c^{\alpha}b^{-k}c^{-\alpha})=wc^{m}$.
\item if $m+\alpha<0$ we have $wc^{m+\alpha}\cdot (b^{k}c^{-\alpha}a^{-s}c^{\alpha}b^{-k}c^{-\alpha})=wc^{m+\alpha} b^{k}\cdot (c^{-\alpha}a^{-s}c^{\alpha}b^{-k}c^{-\alpha})=wc^{m+\alpha}b^{k}c^{-\alpha}\cdot (a^{-s}c^{\alpha}b^{-k}c^{-\alpha})=wc^{m+\alpha}b^{k}c^{-\alpha}\cdot (c^{\alpha}b^{-k}c^{-\alpha})=wc^{m}$.
\item if $m+\alpha=0$, by (\ref{eq: action}) we have that in the most general form, the vertex $v$ is either of the form $v=v'c^{-\ell}b^{t}c^{-\alpha}$ or $v=v'c^{\ell}a^{t}c^{-\alpha}$ for some $\ell>0$. Let us consider these two further cases:
\begin{itemize}
\item In the first case we have:
 $$v'c^{-\ell}b^{t}c^{-\alpha}\cdot (a^{s}c^{\alpha}b^{k}c^{-\alpha}a^{-s}c^{\alpha}b^{-k}c^{-\alpha})=v'c^{-\ell}b^{t}\cdot (b^{k}c^{-\alpha}a^{-s}c^{\alpha}b^{-k}c^{-\alpha})$$
Now, if $t\neq -k$, then $v'c^{-\ell}b^{t}\cdot (b^{k}c^{-\alpha}a^{-s}c^{\alpha}b^{-k}c^{-\alpha})=v'c^{-\ell}b^{t+k}c^{-\alpha}\cdot (a^{-s}c^{\alpha}b^{-k}c^{-\alpha})=v'c^{-\ell}b^{t}c^{-\alpha}=v$. Otherwise, suppose that $t=-k$. In this case we have 
\begin{align*}
& v'c^{-\ell}b^{t}\cdot (b^{k}c^{-\alpha}a^{-s}c^{\alpha}b^{-k}c^{-\alpha})=v'c^{-\ell-\alpha}\cdot (a^{-s}c^{\alpha}b^{-k}c^{-\alpha})=\\
&=v'c^{-\ell}\cdot (b^{-k}c^{-\alpha})=v'c^{-\ell}b^{-k}c^{-\alpha}=v.
\end{align*}
Hence, $v\cdot g=v$.
\item In the second case we the following computation:
\begin{align*}
&v'c^{\ell}a^{t}c^{-\alpha}\cdot (a^{s}c^{\alpha}b^{k}c^{-\alpha}a^{-s}c^{\alpha}b^{-k}c^{-\alpha})=\\
&= v'c^{\ell}a^{t}\cdot (b^{k}c^{-\alpha}a^{-s}c^{\alpha}b^{-k}c^{-\alpha})=v'c^{\ell}a^{t}\cdot (c^{-\alpha}a^{-s}c^{\alpha}b^{-k}c^{-\alpha})=\\
&= v'c^{\ell}a^{t}c^{-\alpha}\cdot (a^{-s}c^{\alpha}b^{-k}c^{-\alpha})=v'c^{\ell}a^{t}\cdot (b^{-k}c^{-\alpha})=v'c^{\ell}a^{t}c^{-\alpha}=v
\end{align*}
\end{itemize}
\end{itemize}
\item if $x=b$ we have the following computation:
\begin{align*}
&wb^{m}\cdot (a^{s}c^{\alpha}b^{k}c^{-\alpha}a^{-s}c^{\alpha}b^{-k}c^{-\alpha})=wb^{m}c^{\alpha}\cdot (b^{k}c^{-\alpha}a^{-s}c^{\alpha}b^{-k}c^{-\alpha})=\\
&= wb^{m}\cdot (a^{-s}c^{\alpha}b^{-k}c^{-\alpha})= wb^{m}c^{\alpha}\cdot (b^{-k}c^{-\alpha})=wb^{m}
\end{align*}
\item the last case is when $x=a$. In this case we have to subdivide the following two subcases:
\begin{itemize}
\item if $m+s\neq 0$, for which we have:
\begin{align*}
&wa^{m}\cdot (a^{s}c^{\alpha}b^{k}c^{-\alpha}a^{-s}c^{\alpha}b^{-k}c^{-\alpha})=wa^{m+s}c^{\alpha}\cdot (b^{k}c^{-\alpha}a^{-s}c^{\alpha}b^{-k}c^{-\alpha})=\\
&=wa^{m+s}\cdot (a^{-s}c^{\alpha}b^{-k}c^{-\alpha})=wa^{m}\cdot (c^{\alpha}b^{-k}c^{-\alpha})=wa^{m}.
\end{align*}
\item if $m+s= 0$, then in its most general form the vertex $v$ is of the form$v=v'c^{t}a^{-s}$ with $t\ge 0$. In case $t>0$ we have:
\begin{align*}
&v'c^{t}a^{-s}\cdot (a^{s}c^{\alpha}b^{k}c^{-\alpha}a^{-s}c^{\alpha}b^{-k}c^{-\alpha})=v'c^{t+\alpha}\cdot (b^{k}c^{-\alpha}a^{-s}c^{\alpha}b^{-k}c^{-\alpha})=\\
&=v'c^{t}\cdot (a^{-s}c^{\alpha}b^{-k}c^{-\alpha})=v'c^{t}a^{-s}
\end{align*}
In case $t=0$ we necessarily have $v'=\gamma$ and also in this case we have $\gamma a^{-s}\cdot (a^{s}c^{\alpha}b^{k}c^{-\alpha}a^{-s}c^{\alpha}b^{-k}c^{-\alpha})=\gamma a^{-s}$.
\end{itemize}
\end{itemize}
\end{proof}
\begin{theorem}
The group $G=\mathcal{G}(\mathfrak{A})$ defined by the antenna graph $\mathfrak{A}$ is not residually finite. 
\end{theorem}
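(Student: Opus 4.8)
The plan is to exhibit a single element $g \in G = \mathcal{G}(\mathfrak{A})$ that is nontrivial in $G$ yet maps to the identity in \emph{every} finite quotient; this directly contradicts residual finiteness. The candidate witness is the commutator
\[
g = [a,\, c^{-1} b c] = a^{-1} c^{-1} b^{-1} c\, a\, c^{-1} b c,
\]
using the convention $[x,y] = x^{-1}y^{-1}xy$. The whole argument hinges on the \emph{one-sidedness} of the relations in Lemma~\ref{lem: relations antenna}: there one only obtains $[a,\, c^{\alpha} b c^{-\alpha}] = \mathds{1}_{\mathcal{G}(\mathfrak{A})}$ for $\alpha \ge 0$, reflecting the asymmetry between the copies of $\Lambda_a$ glued at positive $c$-vertices and the copies of $\Lambda_b$ glued at negative $c$-vertices. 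I expect the relation with $\alpha = -1$ to \emph{fail} in $G$, while being automatically forced to hold in any finite quotient, where the finite order of $c$ identifies negative powers with positive ones.

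First I would carry out the finite-quotient side, which is the easy direction. Let $q\colon G \to F$ be any homomorphism onto a finite group and write $\bar x = q(x)$. Since $F$ is finite, $\bar c$ has some finite order $n$, so $\bar c^{-1} = \bar c^{\,n-1}$. Specializing Lemma~\ref{lem: relations antenna} with $t=0$, $s=k=1$, and $p = n-1 \ge 0$ yields $[a,\, c^{\,n-1} b\, c^{-(n-1)}] = \mathds{1}_{\mathcal{G}(\mathfrak{A})}$ in $G$, hence $[\bar a,\, \bar c^{\,n-1} \bar b\, \bar c^{-(n-1)}] = \mathds{1}_F$. But $\bar c^{\,n-1} \bar b\, \bar c^{-(n-1)} = \bar c^{-1} \bar b\, \bar c$, so $q(g) = [\bar a,\, \bar c^{-1} \bar b\, \bar c] = \mathds{1}_F$. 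Thus $g$ lies in the kernel of every finite quotient of $G$.

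It remains to prove $g \neq \mathds{1}_{\mathcal{G}(\mathfrak{A})}$, and this explicit verification is the main (purely computational) obstacle. For this I would let $g$ act on the root $\gamma$ via the transition rules in~(\ref{eq: action}), tracking the canonical geodesic word of the image vertex one letter at a time. Reading $a^{-1} c^{-1} b^{-1} c\, a\, c^{-1} b c$ from the left, the initial $a^{-1}$ fixes $\gamma$ (since $\gamma \cdot a^s = \gamma$), while each subsequent letter strictly lengthens the geodesic, because at every intermediate vertex the applied generator is a \emph{shifting} generator rather than a loop: $c^{\pm 1}$ always shifts, $b^{\pm 1}$ shifts on vertices whose last block is $c^{s}$ with $s<0$, and $a$ shifts on vertices whose last block is $c^{s}$ with $s \ge 0$. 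Carrying this out gives $\gamma \cdot g = c^{-1} b^{-1} c\, a\, c^{-1} b c$, a vertex of geodesic length $7$ and hence distinct from $\gamma$. Therefore $g$ acts nontrivially on $V(\mathfrak{A})$, so $g \neq \mathds{1}_{\mathcal{G}(\mathfrak{A})}$, and the proof is complete. The only delicate point is confirming that none of the intermediate applications is a loop (which would stall or shorten the word); this is precisely where the asymmetry driving the one-sided relations of Lemma~\ref{lem: relations antenna} manifests itself, so the nontriviality of $g$ and the collapse of all its finite images have a common structural origin.
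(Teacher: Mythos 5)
Your proposal is correct and follows essentially the same route as the paper: the same witness element (the commutator of $a$ and $c^{-1}bc$, up to commutator convention), the same use of Lemma~\ref{lem: relations antenna} via the observation that in a finite quotient $\bar{c}^{-1}$ equals a nonnegative power of $\bar{c}$, and the same nontriviality check by acting on the root $\gamma$ through the rules in (\ref{eq: action}). The only difference is presentational (you phrase it as exhibiting an element in the kernel of every finite quotient rather than deriving a contradiction from a separating homomorphism), which is logically identical.
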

\begin{proof}
Let us assume that $G$ is residually finite. Consider the element
\[
h=\left[ a, c^{-1}bc\right]=ac^{-1}bca^{-1}c^{-1}b^{-1}c
\] 
Note that $h\neq \mathds{1}_{\mathcal{G}(\mathfrak{A})}$ since we have:
\[
\gamma\cdot ac^{-1}bca^{-1}c^{-1}b^{-1}c=\gamma\cdot (c^{-1}bca^{-1}c^{-1}b^{-1}c)=\gamma c^{-1}bca^{-1}c^{-1}b^{-1}c \neq \gamma
\]
Now, since $G$ is residually finite, there is a finite group $H$ and an homomorphism $\varphi: G\to H$ such that $\varphi(h)\neq\mathds{1}_{H} $. Since $H$ is finite there is some positive $\alpha>0$ such that $\varphi(c)^{\alpha}=\varphi(c)^{-1}$. By Lemma~\ref{lem: relations antenna} we have $\left[ a, c^{\alpha}bc^{-\alpha}\right]=\mathds{1}_{\mathcal{G}(\mathfrak{A})}$, hence:
\begin{align*}
\mathds{1}_{H}&=\varphi\left(\left[ a, c^{\alpha}bc^{-\alpha}\right] \right)=\left[ \varphi(a), \varphi(c)^{\alpha}\varphi(b)\varphi(c)^{-\alpha}\right]=\left[ \varphi(a), \varphi(c)^{-1}\varphi(b)\varphi(c)\right]=\\
&= \varphi\left(\left[ a, c^{-1}bc\right] \right)=\varphi(h)\neq \mathds{1}_{H}
\end{align*}
a contradiction, hence $G$ is not residually finite. 
\end{proof}
In light of Theorem~\ref{theo: virtually subgroups} and the preceding theorem, we observe that, if Brough Conjecture~\ref{cj: Brough1} regarding poly-context-free groups is true, then the class of all poly-context-free groups would be strictly contained within the class {\bf CF-TR}. Although we are not able to show that the class of poly-context-free is included into our class, in the next example we show that these two classes are different. This is done by defining the following context-free graph. 
\begin{figure}[ht]
\includegraphics[scale=.6]{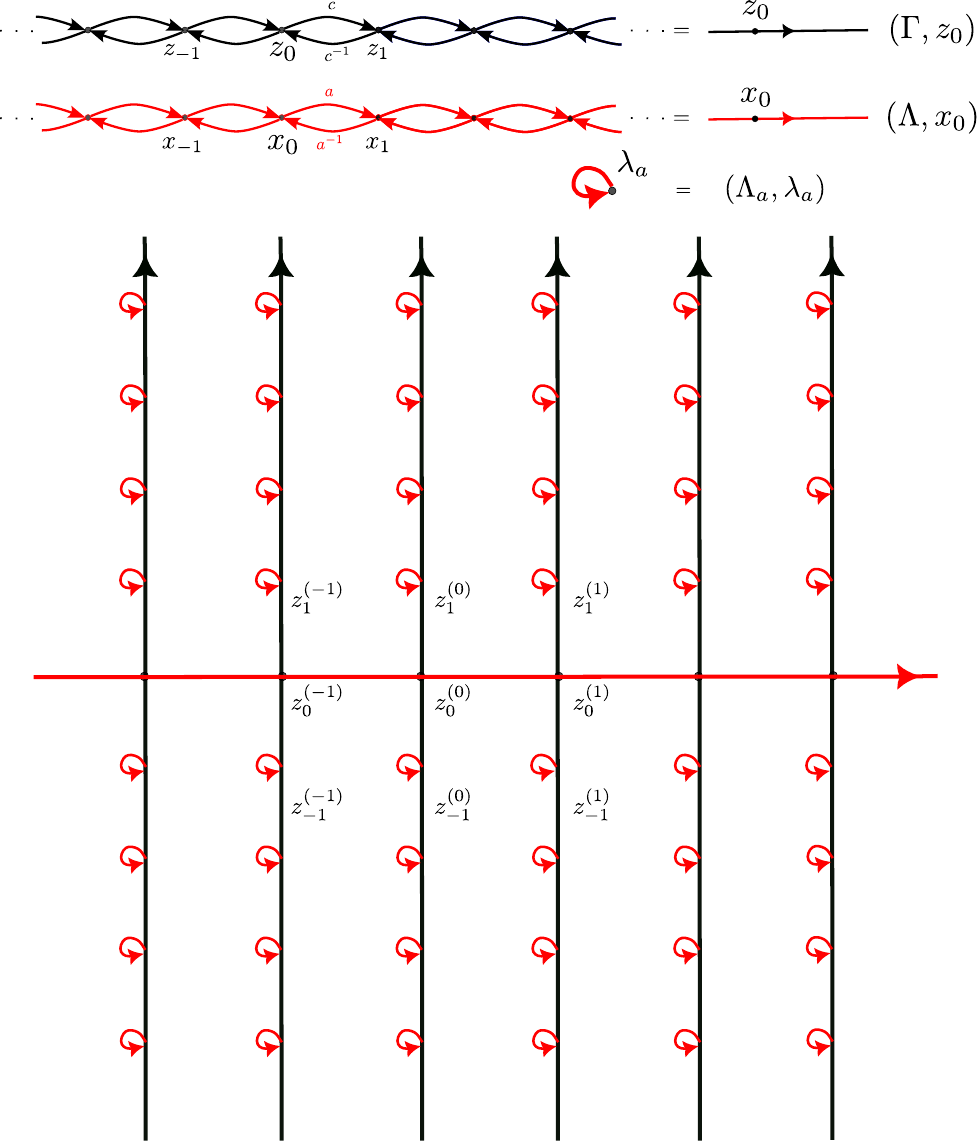}
\caption{On the upper part the graphs $\Theta_{1}, \Theta_{2}$, in the lower picture the comb graph $(\Theta_{1}\star_{\Psi}\Theta_{2}, \gamma)$.}  \label{fig: comb}
\end{figure}
\begin{defn}[The comb graph] \label{def:comb}
Let $A_{1}=\{c,c^{-1}\}$, $A_{2}=\{a,a^{-1}\}$. Let us define the rooted $A_{1}$-graphs $(\Gamma, z_{0})$ whose underlying graph is the line 
\[
z_i\mapright{c} z_{i+1},\;z_{i+1}\mapright{c^{-1}}z_i,\quad \forall i\in\mathbb{Z},
\]
The other two rooted $A_{2}$-graphs $(\Lambda_{a},\lambda_{a}), (\Lambda, x_{0})$ are defined as follows. The graph $(\Lambda_{a}, \lambda_{a})$ consists of just one vertex $\lambda_{a}$ with loops labeled by $a,a^{-1}$. Instead, $(\Lambda_{a},x_{0})$ is the defined as the line on $A_{2}$:
\[
x_i\mapright{a} x_{i+1},\;x_{i+1}\mapright{a^{-1}}x_i,\quad \forall i\in\mathbb{Z},
\]
See the upper part of Fig.~\ref{fig: comb}. The gluing map $\Psi$ is given by the following functions. The gluing maps are given by the following constant maps 
\[
\psi_{1}:V(\Gamma)\setminus\{z_{0}\}\to \{\lambda_{a}\},\,\quad\psi_{1}(z)=\lambda_{a}
\]
\[
\psi_{2}:V(\Lambda)\setminus\{x_{0}\}\to \{z_{0}\},\,\quad\psi_{2}(x_{i})= z_{0}
\]
By putting $\Theta_{1}=\{(\Gamma, z_{0})\}$ and $\Theta_{2}=\{(\Lambda,x_{0}), (\Lambda_{a}, \lambda_{a})\}$, we see that $(\Theta_{1}, \Theta_{2}, \Psi)$ is a context-free ensemble, thus the free product $(\mathfrak{C}, \gamma)=(\Theta_{1}\star_{\Psi}\Theta_{2}, z_{0})$ defines a context-free graph called the \textbf{comb graph}, see Fig.~\ref{fig: comb}. Note that with the convention of the free product that we have given, the root of this graph is $z_{0}$ which is also identified with $x_{0}$. In $\mathfrak{C}$ there are infinite copies of the $c$-line, we denote by $z_{i}^{(j)}$ the $i$-th vertex of the $j$-th copy of this line. Graphically, two vertices $z_{i}^{(j)}$, $z_{i}^{(k)}$ are at the same ``$i$-th level'', but they belong to the $j$-th and $k$-th $c$-line, respectively. The $a$-line intersects at the vertices $z_{0}^{(j)}$.
\end{defn}
\begin{prop}\label{prop:solvable-z-infinito}
The transition group $G = \mathcal{G}(\mathfrak{C})$ is isomorphic to $\mathbb{Z} \wr \mathbb{Z}$. 
In particular, $G$ is not finitely presented, solvable and contains a copy of the infinite-rank free abelian group $\mathbb{Z}^{\infty}$. Hence it is not poly–context-free.
\end{prop}
\begin{proof}
By Proposition \ref{prop:schreier} we need to find a core-free subgroup of $\mathbb{Z}\wr\mathbb{Z}$ whose corresponding Schreier graph is exactly the graph depicted in Fig.~\ref{fig: comb}. But this figure is exactly Fig.~1 in \cite{boudec-mattebon}.
\end{proof}

\begin{figure}[ht]
\includegraphics[scale=.7]{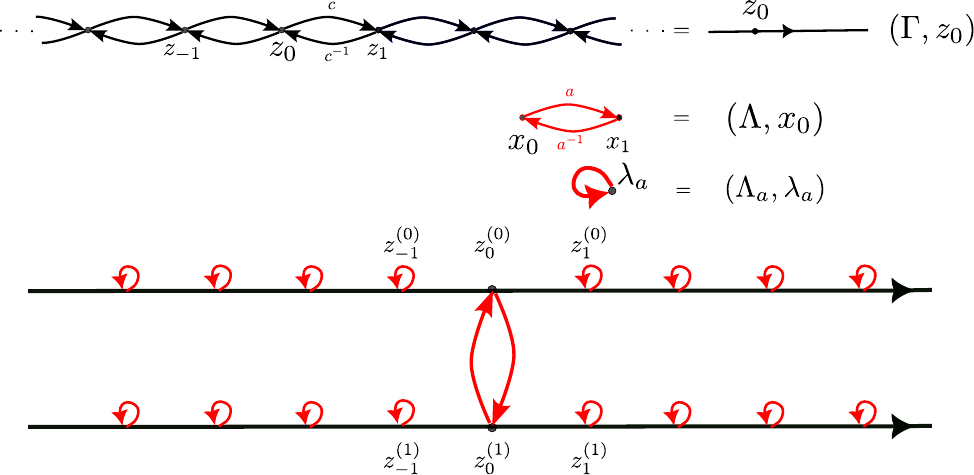}
\caption{The context-free graph $\mathfrak{T}$ obtained by taking the free product of the graphs in the upper part, where the gluing maps are defined by taking $\psi^1(z_i)=\lambda_a$ for all $z_i\neq z_0$, and $\psi^2(x_1)=z_0$.}  \label{fig: torsion}
\end{figure}
We finally address the second point of Question~\ref{prob: z infinity and unbound finite groups} regarding the possibility for a transition group to have arbitrarily large finite subgroups. We prove that such examples may exists by showing a group in {\bf CF-TR} having an infinite torsion subgroup. The associated graph $\mathfrak{T}$ on the alphabet $A=\{a,c,a^{-1}, c^{-1}\}$ is depicted in Fig.~\ref{fig: torsion}. Using the notation in Subsection~\ref{subsec: perturbing} we notice that $\mathfrak{T}$ is locally-quasi-transitive with infinite orbits by taking $n=1$. We have the following proposition. 
\begin{prop} \label{prop:boundary-infinite-torsion}
    The boundary subgroup $\mathrsfs{B}_{1}$ is an infinite torsion normal subgroup of $\mathcal{G}(\mathfrak{T})$ whose elements have order two. Moreover, $ G/_{\mathrsfs{B}_{1}} \simeq \mathbb{Z}$.
\end{prop}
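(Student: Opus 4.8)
The plan is to apply the structural Theorem~\ref{theo: structural locally quasi-trans} and then sharpen its conclusions using the explicit combinatorics of $\mathfrak{T}$. Since $\mathfrak{T}$ is locally-quasi-transitive with infinite orbits for $n=1$, the theorem already produces a short exact sequence $\mathds{1}\to\mathrsfs{B}_1\to G\to H\to\mathds{1}$ with $H\simeq G/\mathrsfs{B}_1$ and $\mathrsfs{B}_1$ a torsion normal subgroup whose elements $g$ satisfy $o(g)\le |S_1(z_0)|\,|g|$ (Corollary~\ref{cor: torsion perturbation}); so normality and torsion come for free, and it remains to (i) identify $H$ with $\mathbb{Z}$ and (ii) upgrade ``torsion'' to ``order exactly two'' together with infiniteness. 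For (i) I would read off the associated quasi-transitive graph $\Theta$: outside the radius-one ball, $\mathfrak{T}$ consists of $c$-rays carrying an $a$-loop at every vertex, and the homogenisation $\Theta$ is the graph in which \emph{every} vertex carries such an $a$-loop. Hence $a$ acts as the identity permutation on $\Theta$ and $\mathcal{G}(\Theta)=\langle c\rangle\simeq\mathbb{Z}$. Equivalently, by Lemma~\ref{lem: infinite orbits} one has $\mathcal{O}_1=\mathcal{L}(\Theta)=\{w\in A^*:\sigma_c(w)=0\}$, where $\sigma_c$ denotes the $c$-exponent sum; since $\mathcal{L}(\mathfrak{T})\subseteq\mathcal{O}_1$ this makes $\sigma_c$ descend to a surjection $G\to\mathbb{Z}$ with $\mathrsfs{B}_1=\ker\sigma_c$, whence $G/\mathrsfs{B}_1\simeq\mathbb{Z}$.

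For (ii) the key input is the explicit action of the letters on $\mathfrak{T}$. By construction $\mathfrak{T}$ is obtained by gluing two copies of the $c$-line at their roots through a single $a$-transposition, with an $a$-loop at every other vertex; writing the vertices as $z_i^{(j)}$ with $i\in\mathbb{Z}$, $j\in\{0,1\}$, the generator $c$ acts by $z_i^{(j)}\mapsto z_{i+1}^{(j)}$, while $a$ swaps $z_0^{(0)}\leftrightarrow z_0^{(1)}$ and fixes all other vertices. Thus $a$ is an involution, so $a^2=\mathds{1}_G$. Since $\mathrsfs{B}_1=\ker\sigma_c$ is generated by the conjugates $g_n:=c^{\,n}ac^{-n}$ (the usual generating set of the kernel of a length homomorphism on $\langle a,c\rangle$), and $a$ has order two, every such generator is an involution. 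I would then compute directly from the action that $g_n$ is precisely the transposition exchanging $z_n^{(0)}$ and $z_n^{(1)}$ while fixing every other vertex. Consequently the $g_n$ have pairwise disjoint supports, hence commute, and $\mathrsfs{B}_1=\langle g_n:n\in\mathbb{Z}\rangle$ is an elementary abelian $2$-group; in particular every nontrivial element of $\mathrsfs{B}_1$ has order exactly two.

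Infiniteness then follows from the same computation: the transpositions $g_n$, $n\in\mathbb{Z}$, are pairwise distinct and independent, since each $g_n$ moves a pair of vertices that is fixed by all the others, so they generate a subgroup isomorphic to $\bigoplus_{\mathbb{Z}}\mathbb{Z}/2\mathbb{Z}$ inside $\mathrsfs{B}_1$; therefore $\mathrsfs{B}_1$ is infinite. Combined with (i) this establishes every assertion of the proposition, and incidentally identifies $G$ with the lamplighter group $\bigl(\bigoplus_{\mathbb{Z}}\mathbb{Z}/2\mathbb{Z}\bigr)\rtimes\mathbb{Z}$. I expect the main obstacle to be step (ii): one has to extract the permutations induced by $a$ and $c$ from the free-product description of $\mathfrak{T}$ carefully enough to be certain that $a$ is a \emph{single} transposition (so that no element of $\mathrsfs{B}_1$ can acquire order greater than two) and that the supports of the $g_n$ are genuinely pairwise disjoint, as it is exactly this disjointness that simultaneously forces order two and supplies the infinitely many independent involutions.
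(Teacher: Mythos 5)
Your proposal is correct, and its skeleton coincides with the paper's: both invoke the locally-quasi-transitive structure for $n=1$, both use Lemma~\ref{lem: infinite orbits} to identify $\mathcal{O}_{1}$ with the words of zero $c$-exponent sum (so that $\mathrsfs{B}_{1}$ is the kernel of the induced map to $\mathbb{Z}$, and $G/_{\mathrsfs{B}_{1}}\simeq\mathbb{Z}$ follows from $\mathcal{G}(\Lambda_1)=\langle c\rangle\simeq\mathbb{Z}$ together with Theorem~\ref{theo: structural locally quasi-trans}), and both exhibit the conjugates $c^{\mp n}ac^{\pm n}$ as distinct disjointly supported transpositions to get infiniteness. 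Where you genuinely diverge is the order-two claim. The paper proves it by a parity argument applied directly to an arbitrary $u\in\mathcal{O}_{1}$: since a walk labelled by such a $u$ either returns to its starting vertex or crosses the bridge $z_0^{(0)}\leftrightarrow z_0^{(1)}$ an odd number of times, one gets $z_i^{(j)}\cdot u\in\{z_i^{(j)},z_i^{(j+1\bmod 2)}\}$ for every vertex, hence $u^{2}$ acts trivially; no generating set of $\mathrsfs{B}_{1}$ is needed. You instead use the standard free-group fact that $\ker\chi_c\le F_{\{a,c\}}$ is generated by the conjugates $c^{n}ac^{-n}$, push this through the surjection onto $G$, and observe that the images are commuting involutions with pairwise disjoint supports, so that $\mathrsfs{B}_{1}\simeq\bigoplus_{\mathbb{Z}}\mathbb{Z}/2\mathbb{Z}$ is elementary abelian. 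Your route requires this extra (routine) algebraic input, but it buys strictly more than the statement asks: it identifies $\mathrsfs{B}_{1}$ completely and shows $G\simeq\bigl(\bigoplus_{\mathbb{Z}}\mathbb{Z}/2\mathbb{Z}\bigr)\rtimes\mathbb{Z}$, the lamplighter group, which the paper's proof never makes explicit. One cosmetic slip: with the paper's right-action convention, your $g_n=c^{n}ac^{-n}$ swaps $z_{-n}^{(0)}\leftrightarrow z_{-n}^{(1)}$ rather than $z_{n}^{(0)}\leftrightarrow z_{n}^{(1)}$; as $n$ ranges over $\mathbb{Z}$ this changes nothing.
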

\begin{proof}
    Let $\chi_c: A^{*}\to \mathbb{Z}$ be the homomorphism counting the $c$, i.e., $\chi_c(c)=-\chi_c(c^{-1})=1$ and deleting all the occurrences of $a$.
    Note that $\mathfrak{T}$ is locally-quasi-transitive with $\mathfrak{T}\setminus D_1(x_0)=\Lambda_1\setminus D_1(y_0)$, where $\Lambda_1$ is the transitive graph formed by two $c$-lines with two loops labeled by $a,a^{-1}$ everywhere. By Lemma~\ref{lem: infinite orbits}, the language $\mathcal{O}_1$ is formed by words that label a circuit at every vertex of $\Lambda_1$. It is easy to see that this is exactly the set of words $u\in A^*$ with $\chi_c(u)=0$. Therefore, for any vertex $z_i^{(j)}$, $j\in\{0,1\}$ of $\mathfrak{T}$ and $u\in\mathcal{O}_1$ we have that either $z_i^{(j)}\cdot u=z_i^{(j)}$, or $z_i^{(j)}\cdot u=z_i^{(j+1\mod 2)}$, where the second case occurs if and only the walk $z_i^{(j)}\mapright{u} z_i^{(j+1\mod 2)}$ passes from the edges $z_0^{(j)}\mapright{x} z_0^{(j+1\mod 2)}$, $x\in\{a,a^{-1}\}$, an odd number of times. In particular we have $z_i^{(j)}\cdot u^{2}=z_i^{(j)}$ for every vertex of $\mathfrak{T}$, whence every element of $\mathrsfs{B}_{1}$ has torsion of order two. The boundary subgroup $\mathrsfs{B}_{1}$ is infinite. Indeed, take the elements $h_n=c^{-n} a c^n\in \mathrsfs{B}_{1}$, $n\in\mathbb{Z}$. These are all distinct elements in $\mathcal{G}(\mathfrak{T})$ since for any $j\in\{0,1\}$ we have
    \[
        z_m^{(j)}\cdot h_n=\begin{cases}
            z_m^{(j+1\mod 2)}\,&\mbox{ if }m=n\\
            z_m^{(j)}\,&\mbox{ otherwise }
        \end{cases}
    \]
    The last statement $ G/_{\mathrsfs{B}_{1}} \simeq \mathbb{Z}$ is a consequence of $\mathcal{G}(\Lambda_1)=\langle c\rangle\simeq \mathbb{Z}\le \mathcal{G}(\mathfrak{T})$ and Theorem~\ref{theo: structural locally quasi-trans}. 
\end{proof}
We may explicitly characterize the word problem $\mathcal{L}(\mathfrak{T})$. We first need some definitions. For a word $u\in A^*$, let us denote by $u[i]$ the letter of $u$ occurring at position $i$, and by $u[:i]$ let us denote the prefix of $u$ of length $u$. We say that an occurrence of the letter $a=u[i]$, or $a^{-1}=u[i]$, is at level $m$ if $\chi_c(u[:i])=m$. Since there are just two vertices, namely $z_0^{(1)}, z_0^{(1)}$ in which a walk may enter, it is not difficult to see that for a fixed $m$ the number of $a, a^{-1}$ at level $m$ in $u$ represents the number of times a walk starting from $z_{-m}^{(k)}$, $k\in\{0,1\}$, passes through the ``bridge" given by the edges $z_0^{(j)}\mapright{x} z_0^{(j+1\mod 2)}$, $x\in\{a,a^{-1}\}$. Therefore, a word $u\in A^*$ represents the identity in $\mathcal{G}(\mathfrak{T})$ if and only if $\chi_c(u)=0$ and for each $m\in\mathbb{Z}$ the number of $a,a^{-1}$ at level $m$ is even. Note that we clearly have finitely many $m$ with non-zero number of $a,a^{-1}$. We know that this language is {\bf CoCF}, however, is it also poly-context-free? Let us state this in the following open problem.
\begin{quest}
    Is the set of words $u\in A^*$ with $\chi_c(u)=0$ and having a number of $a,a^{-1}$ that is even at each level a poly-context-free language? This would disprove Brough Conjecture \ref{cj: Brough1} 
\end{quest}

%%%%%%%%%%%%%%%%%%%%%%%%%%%%%%%%%%%%%%%%%%%%%

\begin{quest}
Motivated by the previous constructions, it would be interesting to find examples of {\bf CF-TR} groups which are not embeddable in Thompson $V$. If they existed, they would then provide counterexamples to
\hyperref[thm:lehnert]{Lehnert Conjecture}.
\end{quest}

%%%%%%%%%%%%%%%%%%%%%%%

\section*{Acknowledgments}
The authors are members of the Gruppo Nazionale per le Strutture Algebriche, Geometriche e le loro Applicazioni (GNSAGA) of the Istituto Nazionale di Alta Matematica (INdAM).

The second author is also a member of the PRIN 2022 ``Group theory and its applications'' research group of the Ministero dell'universit\`a e della ricerca and gratefully acknowledges its support. 

The third author acknowledges support from the Grant QUALIFICA by Junta de Andalucía grant number QUAL21 005 USE and from the research grant PID2022-138719NA-I00 (Proyectos de Generación de Conocimiento 2022) financed by
the Spanish Ministry of Science and Innovation.

We are deeply grateful to Corentin Bodart for helpful discussions on the first draft of this manuscript, especially for Proposition~\ref{prop:schreier}, Proposition~\ref{prop:solvable-z-infinito} and for Section~\ref{sec:torsion}.

\end{document}